\documentclass{article}

% if you need to pass options to natbib, use, e.g.:
\PassOptionsToPackage{numbers, compress}{natbib}

% ready for submission
% \input{preamble}
\usepackage[final]{neurips_2025}
\usepackage{amsmath}
\usepackage{amsthm}
\usepackage{graphicx}
\usepackage{subfigure}
\usepackage{mathtools}
\usepackage{wrapfig} 
\usepackage{siunitx}  % for scientific notation
\usepackage{mdframed} % for boxed

% for toc in appendix
\usepackage{titletoc}
% \usepackage[sections]{minitoc} % for toc in appendix
% \usepackage{appendix} 

% Using the default style for Propositions

% Using the definition style for non-italicized Remarks
\theoremstyle{definition}
\newtheorem{definition}{Definition}
\newtheorem{remark}{Remark}
\newtheorem{lemma}{Lemma}

% to compile a preprint version, e.g., for submission to arXiv, add add the
% [preprint] option:
%     \usepackage[preprint]{neurips_2024}

% to compile a camera-ready version, add the [final] option, e.g.:
%     \usepackage[final]{neurips_2024}

% to avoid loading the natbib package, add option nonatbib:
%    \usepackage[nonatbib]{neurips_2024}

% Standard NeurIPS packages.
\usepackage[utf8]{inputenc} % allow utf-8 input
\usepackage[T1]{fontenc}    % use 8-bit T1 fonts
% HYPERREF
\usepackage[table,dvipsnames]{xcolor} %usenames is now obsolete, included in dvipsnames
\usepackage[colorlinks,linktoc=all]{hyperref}
\hypersetup{citecolor=MidnightBlue}
\hypersetup{linkcolor=MidnightBlue}
\hypersetup{urlcolor=MidnightBlue}

\usepackage{url}            % simple URL typesetting
\usepackage{booktabs}       % professional-quality tables
\usepackage{nicefrac}       % compact symbols for 1/2, etc.
\usepackage{colortbl}
\usepackage[pdftex]{contour}
\usepackage{xfrac}

\usepackage{graphicx}
\usepackage[labelfont=bf]{caption}
\usepackage{subcaption}

\usepackage{lipsum}
\usepackage{bbm}
\usepackage{cancel}
\usepackage{wrapfig}
\usepackage{adjustbox}
\usepackage{multirow}
\usepackage{geometry}
\usepackage{pdflscape}
\usepackage{changepage}
\usepackage{cleveref} % don't have to write Figure all the time
\usepackage{makecell} %omore flexible tables

\usepackage{xcolor}

% Customize link colors
% 

\usepackage{enumitem}
\setitemize{noitemsep,topsep=0pt,parsep=0pt,partopsep=0pt, leftmargin=*}

% Math packages.
\usepackage{amsfonts}
\usepackage{amsmath}
\usepackage{amssymb}
\usepackage{amsthm}

\usepackage{algorithm}
\usepackage{algorithmicx,tikz}
\usepackage{algcompatible}
\usepackage[noend]{algpseudocode}
\usepackage{setspace}
\usepackage{bm}
\usepackage{multirow}
\algnewcommand{\LineComment}[1]{\State \(\#\) #1}

\theoremstyle{plain}% default
\newtheorem{thm}{Theorem}

\newtheorem{prop}[thm]{Proposition}
\newtheorem*{cor}{Corollary}
\newtheorem*{thm*}{Theorem}
\newtheorem*{prop*}{Proposition}

\renewcommand{\b}[1]{\mathbf{#1}}

\newcommand{\fix}[1]{}
\newcommand{\osw}{\chi_w}

\title{Predictability Enables Parallelization \\ of Nonlinear State Space Models}

% The \author macro works with any number of authors. There are two commands
% used to separate the names and addresses of multiple authors: \And and \AND.
%
% Using \And between authors leaves it to LaTeX to determine where to break the
% lines. Using \AND forces a line break at that point. So, if LaTeX puts 3 of 4
% authors names on the first line, and the last on the second line, try using
% \AND instead of \And before the third author name.

\author{%
  Xavier Gonzalez\thanks{Equal contribution.} \\
  Stanford University\\
  \texttt{xavier18@stanford.edu} \\
  \And
  Leo Kozachkov\footnotemark[1] \textsuperscript{\hspace{0.15em}}\thanks{Now at Brown University.}\\
  IBM Research\\
  \texttt{leokoz8@brown.edu} \\
  \And
  \And
  David M.~Zoltowski \\
  Stanford University\\
  \texttt{dzoltow@stanford.edu} \\
  \And
  Kenneth L.~Clarkson \\
  IBM Research\\
  \texttt{klclarks@us.ibm.com} \\
  \And
  Scott Linderman \\
  Stanford University\\
  \texttt{scott.linderman@stanford.edu} \\
}
\begin{document}

\maketitle

\begin{abstract}
    The rise of parallel computing hardware has made it increasingly important to understand which nonlinear state space models can be efficiently parallelized.
    Recent advances like DEER  \citep{lim2024parallelizing} and DeepPCR \citep{danieli2023deeppcr} recast sequential evaluation as a parallelizable optimization problem, sometimes yielding dramatic speedups. However, the factors governing the difficulty of these optimization problems remained unclear, limiting broader adoption.
    In this work, we establish a precise relationship between a system's dynamics and the conditioning of its corresponding optimization problem, as measured by its Polyak-Łojasiewicz (PL) constant. We show that the predictability of a system, defined as the degree to which small perturbations in state influence future behavior and quantified by the largest Lyapunov exponent (LLE), impacts
    the number of optimization steps required for evaluation.
    For predictable systems, the state trajectory can be computed in at worst $\mathcal{O}((\log T)^2)$ time, where $T$ is the sequence length: a major improvement over the conventional sequential approach.
    In contrast, chaotic or unpredictable systems exhibit poor conditioning,
    with the consequence that parallel evaluation converges too slowly to be useful. Importantly, our theoretical analysis shows that predictable systems always yield well-conditioned optimization problems, whereas unpredictable systems lead to severe conditioning degradation. We validate our claims through extensive experiments, providing practical guidance on when nonlinear dynamical systems can be efficiently parallelized. We highlight predictability as a key design principle for parallelizable models.
\end{abstract}

\vspace{-1.5em}
\section{Introduction}
\vspace{-0.5em}

Parallelization has been central to breakthroughs in deep learning, with GPUs enabling the fast training of large neural networks. In contrast, nonlinear state space models like recurrent neural networks (RNNs) have resisted efficient parallelization on GPUs due to their sequential nature.

Recent work addresses this mismatch by reformulating sequential dynamics into parallelizable optimization problems.
Notably, the DEER/DeepPCR algorithm \citep{lim2024parallelizing, danieli2023deeppcr} evaluates nonlinear state space dynamics by minimizing a residual-based merit function, facilitating efficient parallel computation
\begin{wrapfigure}{r}{0.51\columnwidth}
  \vspace{-2.0em} 
  \centering
  \includegraphics[width=0.5\textwidth]{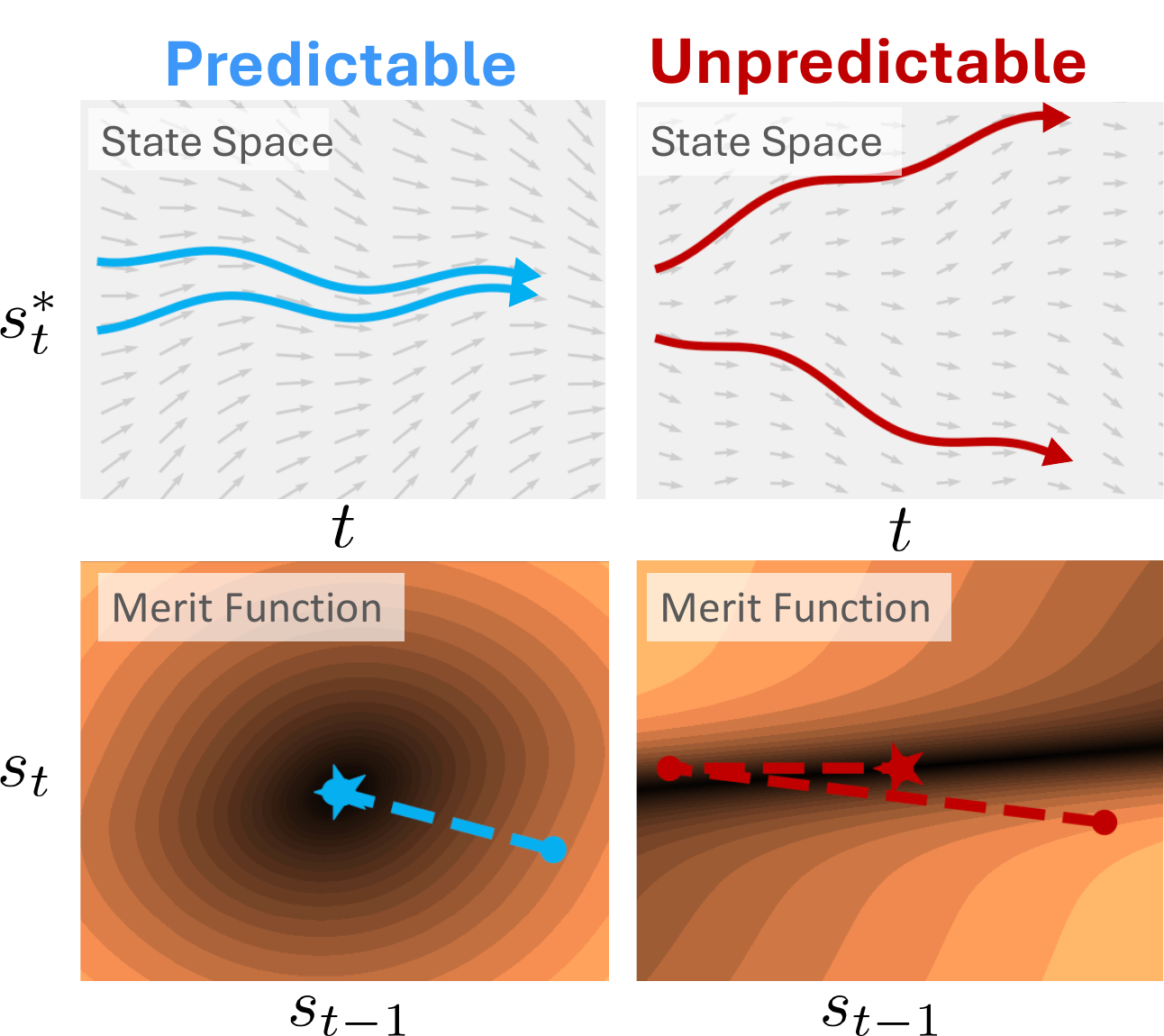}
  \caption{Predictable nonlinear state space models can be recast as well-conditioned, parallelizable optimization problems.}
  \label{fig:fig1}
  \vspace{-2em} 
\end{wrapfigure}
via the Gauss-Newton method.\footnote{DEER \citep{lim2024parallelizing} and DeepPCR \citep{danieli2023deeppcr} were concurrent works that both proposed to use the Gauss-Newton method for optimizing nonlinear sum of squares to parallelize sequential processes. In this paper, we therefore use DEER, DeepPCR, and Gauss-Newton interchangeably.}
\citet{gonzalez2024towards} further developed these methods, including quasi-Newton methods and trust-region methods for parallel evaluation of nonlinear dynamical systems. 
These methods evaluate nonlinear dynamical systems by iteratively linearizing the nonlinear system and evaluating the resulting linear dynamical system (LDS) with a parallel (a.k.a.~associative) scan~\citep{Stone1973, BlellochTR90}.
Each parallel evaluation of an LDS implements one optimization step ~\citep{lim2024parallelizing, danieli2023deeppcr, gonzalez2024towards, UnifyingFramework2025}.  

The usefulness of this optimization-based reformulation depends on two key factors: (a) the computational time per optimization step, and (b) the number of optimization steps required.
The computational time per optimization step is only logarithmic in the sequence length, thanks to its parallel structure. However, the number of steps is governed by the conditioning of the merit function, and that remains poorly understood. In this paper, we characterize the merit function’s conditioning, allowing us to draw a sharp distinction between systems that are amenable to efficient parallelization via merit function minimization and those that are not (see Figure \ref{fig:fig1}, which is generated from trajectories of an RNN). Geometrically, we show that unpredictable systems lead to merit functions that have regions of extreme flatness, which can lead to very slow convergence.  

Drawing from nonlinear dynamical systems theory---particularly contraction analysis \citep{lohmiller1998contraction} and Lyapunov exponent methods \citep{pikovsky2016lyapunov}---we formalize the relationship between system predictability and the conditioning of the merit function. \textbf{Unpredictable systems} are dynamical systems
whose future behavior is highly sensitive to small perturbations.
A common example is a chaotic system, like the weather: a butterfly flapping its wings in Tokyo today can lead to a thunderstorm in Manhattan next month \citep{lighthill1986recently,strogatz2018nonlinear}.
By contrast, \textbf{predictable systems} \citep{thompson1957uncertainty, lorenz1996predictability} are those in which small perturbations are ``forgotten.''
A familiar example is aviation: a patch of choppy air rarely makes an airplane land at the wrong airport. A more formal definition of (un)predictability is given in Definition \ref{def:predictable_systems}.
Our results establish key theoretical principles, make connections between optimization theory and dynamical systems, and demonstrate the practical applicability of parallel computations across a wide range of nonlinear state space modeling tasks.

\paragraph{Contributions \& Outline}
Our central finding is that predictable systems give rise to well-conditioned merit functions, making them amenable to efficient parallelization.
Unpredictable (e.g., chaotic) systems produce poorly conditioned merit functions and are not easily parallelizable.

The paper is organized as follows.
Section~\ref{sec:background} provides background, with formal definitions of predictable and unpredictable nonlinear state space models.
Section~\ref{sec:DEER_Conditioning} presents two key theoretical results that characterize the conditioning of the merit function, showing that the Polyak--Łojasiewicz (PL) constant $\mu$ of the merit function is controlled by the predictability of the dynamics (Theorem~\ref{theorem:PL-LLE}), and that the Lipschitz constant of the residual function Jacobian is governed by the nonlinearity of the dynamics (Theorem~\ref{theorem:Lipschitz}). Section~\ref{sec:rates_of_DEER_convergence} then uses the results about the conditioning of the merit function to prove results about Gauss-Newton in particular. We prove global linear rates of convergence for Gauss-Newton, with the precise rate scaling with the unpredictability of the problem (Theorem \ref{theorem:deer_converges_globally}), and we characterize the basin of quadratic convergence in terms of the predictability and nonlinearity of the underlying dynamics (\Cref{theorem:basin_size}). In Section~\ref{sec:experiments} we illustrate our results with experiments, and in Section \ref{sec:conclusion} we conclude by summarizing context, implications, limitations, and future directions.

\section{Problem Statement \& Background}\label{sec:background}

\paragraph{Notation}
Throughout the paper, we use \( T \) to denote the length of a sequence and \( D \) to represent the dimensionality of a nonlinear state space model. Elements in \( \mathbb{R} \), \( \mathbb{R}^D \) or \( \mathbb{R}^{D \times D} \) are written using non-bold symbols, while elements in \( \mathbb{R}^{TD} \) or \( \mathbb{R}^{TD \times TD} \) are denoted with bold symbols.

\paragraph{Sequential Evaluation vs. Merit Function Optimization}\label{ssc:back_deer}
We consider the $D$-dimensional nonlinear state space model
\begin{equation}\label{eq:nonlinear_seq_model}
s_t = f_t(s_{t-1}) \  \in \mathbb{R}^D.
\end{equation}
A simple example is an input-driven nonlinear RNN, $s_t = \text{tanh}(W s_{t-1} + Bu_t)$, where $W$ and $B$ are weight matrices and $u_t$ is the input into the network at time $t$.
We want to compute the state trajectory, $(s_1, \ldots, s_T)$, starting from an initial condition $s_0$, for a given sequence of functions~$f_1, \cdots, f_T$. 

Systems of the form \eqref{eq:nonlinear_seq_model} are widespread across science and engineering. Examples include physics (numerical weather prediction, molecular dynamics), biology (gene regulatory networks, population dynamics), engineering (control, robotics), and economics (macroeconomic forecasting, asset pricing). In machine learning, sequential operations arise in recurrent neural networks, iterative optimization, and the sampling pass of a diffusion model \citep{danieli2023deeppcr, Shih2023}. Sequential operations even appear in the problem of evaluating transformer blocks over depth \citep{Dehghani2019UniversalTransformers, Schoene2025ImplicitLMsRNNs, Geiping2025RecurrentDepth, CalvoGonzalez2025LeveragingDepth, wang2025hrm, trm2025}. In probabilistic modeling, sequential operations arise in Markov Chain Monte Carlo \citep{pmcmc}. In all of these cases, the state evolves through nonlinear transformations that capture the system’s underlying dynamics.

The obvious approach is to sequentially compute the states according to \cref{eq:nonlinear_seq_model}, taking $T$ steps.
Alternatively, one can cast state evaluation as an optimization problem. 
While less intuitive, an advantage of this approach is that it admits parallel computation~\citep{lim2024parallelizing, danieli2023deeppcr, gonzalez2024towards}. 
Depending on the properties of the nonlinear state space model, the optimization algorithm, and the available hardware, the latter approach can be significantly faster than sequential evaluation. 

We define the residual and corresponding merit\footnote{While minimizing a ``merit function'' is admittedly counterintuitive, we follow \citet[see eq.~11.35]{NocedalWright} in this convention.}
function $\mathcal{L}$ by stacking the elements $s_t \in \mathbb{R}^D$ of a trajectory into a $TD$-dimensional vector $\mathbf{s}$ and considering the vector of temporal differences,
\begin{equation}\label{eq:residual_and_loss}
\mathbf{r}(\mathbf{s}) \coloneq \operatorname{vec}\left( \left[ s_1 - f_1(s_0),\ \ldots,\ s_T - f_T(s_{T-1}) \right] \right) \ \in \ \mathbb{R}^{TD}, \qquad
\mathcal{L}(\mathbf{s}) \coloneq \frac{1}{2} \left\| \mathbf{r}(\mathbf{s}) \right\|_2^2,
\end{equation}
where $\operatorname{vec}(\cdot)$ denotes the flattening of a sequence of vectors into a single column vector.
The true trajectory \( \mathbf{s}^* \) is then obtained by minimizing \( \mathcal{L}(\mathbf{s}) \). 
Note that the residual is zero only at the true trajectory, i.e., when $s_1,s_2,\cdots,s_T$ satisfy \eqref{eq:nonlinear_seq_model} at every time point, so $\mathbf{s}^*$ is the unique global minimum of $\mathcal{L}(\mathbf{s})$.

DeepPCR \citep{danieli2023deeppcr} and DEER \citep{lim2024parallelizing} minimize the merit function using Gauss--Newton updates. Each update takes the form  
\begin{equation}\label{eq:GN_step}
\mathbf{s}^{(i+1)} = \mathbf{s}^{(i)} -\mathbf{J}(\mathbf{s}^{(i)})^{-1} \, \mathbf{r}(\mathbf{s}^{(i)}).  
\end{equation}
where \(\mathbf{J}(\mathbf{s}^{(i)})\) denotes the Jacobian of the residual function, evaluated at the current iterate \(\mathbf{s}^{(i)}\).
The Jacobian is a $TD \times TD$ matrix with $D \times D$ block bidiagonal structure
\begin{align}\label{eq:drds}
    \mathbf{J}(\mathbf{s}^{(i)}) :=  \frac{\partial \mathbf{r}}{\partial \mathbf{s}}(\mathbf{s}^{(i)}) = \begin{pmatrix}
        I_D & 0 & \hdots & 0 & 0\\
        -J^{(i)}_2 & I_D & \hdots & 0 & 0\\
        \vdots & \vdots & \ddots & \vdots & \vdots \\
        0 & 0 & \hdots & I_D & 0 \\
        0 & 0 & \hdots & -J^{(i)}_{T} & I_D \\
    \end{pmatrix} \quad \text{where} \quad J^{(i)}_{t} &\coloneq \frac{\partial f_t}{\partial s_{t-1}}(s^{(i)}_{t-1}).
\end{align}
Due to this block bidiagonal structure, solving $\mathbf{J}(\mathbf{s}^{(i)})^{-1} \, \mathbf{r}(\mathbf{s}^{(i)})$ amounts to solving a linear recursion, which can be done in $\mathcal{O}(\log T)$ time with a parallel scan
\citep{lim2024parallelizing,gonzalez2024towards, BlellochTR90, martin2018parallelizing, smith2022simplified}.
Further details are given in \Cref{app:brief_overview_of_DEER}. 

This sublinear time complexity per step is only useful if the number of optimization steps required to minimize the merit function is small, otherwise it would be more efficient to evaluate the recursion sequentially. Thus, we seek to characterize the conditioning of the merit function --- determining when it is well-conditioned and when it is not --- since this affects the difficulty of finding its minimum. Equation~\eqref{eq:drds} already offers an important clue. The presence of the nonlinear state-space model Jacobians $J_t$, which measure the local stability and predictability of the nonlinear dynamics, foreshadows our central finding: the system’s predictability dictates the conditioning of the merit function.

\paragraph{Predictable Systems: Lyapunov Exponents and Contraction}\label{ssc:back_pred}
Predictability is usually defined through its antonym: \textit{un}predictability \citep{lighthill1986recently,strogatz2018nonlinear}.
In an unpredictable system, the system's intrinsic sensitivity amplifies small perturbations and leads to massive divergence of trajectories.
Predictable systems show the opposite behavior: small perturbations are diminished over time, rather than amplified. The notion of (un)predictability can be formalized through various routes such as chaos theory \citep{gleick2008chaos,schuster2006deterministic} and contraction analysis \citep{lohmiller1998contraction,FB-CTDS}. 

The definition of predictability comes from the Largest Lyapunov Exponent (LLE)
\citep{pikovsky2016lyapunov,strogatz2018nonlinear}:
\\
\begin{mdframed}[linewidth=1pt]
\begin{definition}[\textbf{Predictability and Unpredictability}]\label{def:predictable_systems}
Consider a sequence of Jacobians, $J_1, J_2, \cdots J_T$. We define the associated Largest Lyapunov Exponent (LLE) to be
\begin{equation}\label{eq:LLE_definition}
\text{LLE} \ \coloneqq \ \lim_{T \to \infty} \frac{1}{T} \log\left( \left\| J_T J_{T-1} \cdots J_1 \right\| \right) \ = \ \lambda,
\end{equation}
where $\| \cdot \|$ is an induced operator norm. If $\lambda < 0$, we say that the nonlinear state space model is \emph{predictable} at $s_0$. Otherwise, we say it is \emph{unpredictable}.
\end{definition}
\end{mdframed}
Suppose we wish to evaluate the nonlinear state space model \eqref{eq:nonlinear_seq_model} from an initial condition \( s_0 \), but we only have access to an approximate measurement \( s_0' \) that differs slightly from the true initial state. If the system is unpredictable ($\lambda > 0$), then the distance between nearby trajectories grows as
\begin{equation}\label{eq:chaotic_seperation}
 \|s_t - s_t'\| \ \sim \ e^{\lambda t} \|s_0 - s_0'\|.     
\end{equation}
Letting \( \Delta \) denote the maximum acceptable deviation beyond which we consider the prediction to have failed, the time horizon over which the prediction remains reliable scales as  
\begin{equation}\label{eq:chaotic_time_horizon}
\text{Time to degrade to $\Delta$ prediction error} \ \sim \ \frac{1}{\lambda} \log\left( \frac{\Delta}{\|s_0 - s_0'\|} \right).
\end{equation}  
This relationship highlights a key limitation in unpredictable systems: even significant improvements in the accuracy of the initial state estimate yield only logarithmic gains in prediction time. The system's inherent sensitivity to initial conditions overwhelms any such improvements. Predictable systems, such as contracting systems, have the opposite property: trajectories initially separated by some distance will eventually converge towards one another (\Cref{fig:fig1}), \textit{improving} prediction accuracy over time.

\section{Conditioning of Merit Function Depends on Predictability of Model}\label{sec:DEER_Conditioning}
The number of optimization steps required to minimize the merit function \eqref{eq:residual_and_loss} is impacted by its conditioning, which in our setting is determined by the smallest singular value of the residual function Jacobian. As we will see, what determines the smallest singular value of the residual function Jacobian is the stability, or predictability, of the underlying nonlinear state space model~\eqref{eq:nonlinear_seq_model}. 

\subsection{The Merit Function is PL}\label{subsec:Merit_PL}
To begin, we show that the merit function \eqref{eq:residual_and_loss}
satisfies the Polyak-Łojasiewicz (PL) condition \citep{polyak1963gradient, karimi2016linear}, also known as the gradient dominance condition~\citep{fazel2018global}.  A function $\mathcal{L} (\b s)$ is $\mu$-PL if it satisfies, for $\mu > 0$, 
\begin{equation}\label{eq:PL}
\frac{1}{2} ||\nabla \mathcal{L} (\b s)||^2 \ \geq \ \mu \, \left(\mathcal{L}(\b s) - \mathcal{L}(\b s^*) \right) 
\end{equation}
for all $\mathbf{s}$. The largest $\mu$ for which \cref{eq:PL} holds for all $\mathbf{s}$ is called the PL constant of $\mathcal{L}(\mathbf{s}).$

\begin{prop}\label{prop:DEER_is_PL}
The merit function $\mathcal{L}(\mathbf{s})$  defined in \cref{eq:residual_and_loss} satisfies \cref{eq:PL} for
\begin{equation}\label{eq:mu_def}
    \mu := \inf_{\mathbf{s}} \sigma_{\mathrm{min}}^2(\mathbf{J}(\mathbf{s})).
\end{equation}
\end{prop}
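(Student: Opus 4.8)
The plan is to verify the PL inequality \eqref{eq:PL} pointwise, exploiting the least-squares structure of $\mathcal{L}$. First I would record two facts already supplied above: that $\mathbf{s}^*$ is the unique global minimizer with $\mathbf{r}(\mathbf{s}^*)=0$, hence $\mathcal{L}(\mathbf{s}^*)=0$, so the right-hand side of \eqref{eq:PL} collapses to $\mu\,\mathcal{L}(\mathbf{s})=\tfrac{\mu}{2}\|\mathbf{r}(\mathbf{s})\|^2$; and that, since $\mathcal{L}=\tfrac12\,\mathbf{r}^\trans\mathbf{r}$ with $\mathbf{J}=\partial\mathbf{r}/\partial\mathbf{s}$, the chain rule gives $\nabla\mathcal{L}(\mathbf{s})=\mathbf{J}(\mathbf{s})^\trans\mathbf{r}(\mathbf{s})$.

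The core step is then a singular-value bound on the gradient norm. I would write $\|\nabla\mathcal{L}(\mathbf{s})\|^2=\mathbf{r}^\trans\mathbf{J}\mathbf{J}^\trans\mathbf{r}$ and lower-bound this Rayleigh quotient by the smallest eigenvalue of $\mathbf{J}\mathbf{J}^\trans$, which equals $\sigma_{\min}^2(\mathbf{J})$ (using that $\mathbf{J}$ and $\mathbf{J}^\trans$ share singular values). This yields $\tfrac12\|\nabla\mathcal{L}(\mathbf{s})\|^2 \geq \sigma_{\min}^2(\mathbf{J}(\mathbf{s}))\,\mathcal{L}(\mathbf{s})$ for each fixed $\mathbf{s}$. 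Because $\mathcal{L}(\mathbf{s})\geq 0$, replacing the coefficient $\sigma_{\min}^2(\mathbf{J}(\mathbf{s}))$ by the smaller quantity $\mu=\inf_{\mathbf{s}'}\sigma_{\min}^2(\mathbf{J}(\mathbf{s}'))$ only weakens the bound, giving \eqref{eq:PL} uniformly over $\mathbf{s}$ and completing the argument.

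The argument is essentially a one-line computation once the gradient is identified, so I do not anticipate a genuine obstacle; the only points demanding care are bookkeeping rather than depth. I would be careful to (i) apply the transpose-invariance of singular values in the correct direction, since it is $\mathbf{J}^\trans\mathbf{r}$, not $\mathbf{J}\mathbf{r}$, that appears in the gradient, and (ii) confirm that $\mu$ is genuinely positive so that the stated constant qualifies as a valid PL constant. For the latter I would invoke the block-bidiagonal structure in \eqref{eq:drds}: $\mathbf{J}(\mathbf{s})$ is lower-triangular with identity diagonal blocks, so $\det\mathbf{J}(\mathbf{s})=1$ and $\mathbf{J}(\mathbf{s})$ is invertible for every $\mathbf{s}$, whence $\sigma_{\min}(\mathbf{J}(\mathbf{s}))>0$ pointwise. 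Whether the infimum remains bounded away from zero as $T$ grows is precisely the predictability question addressed by the subsequent theorems, but it is not needed to establish the proposition as stated.
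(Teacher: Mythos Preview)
Your proposal is correct and mirrors the paper's proof almost exactly: compute $\nabla\mathcal{L}=\mathbf{J}^\trans\mathbf{r}$, use $\mathcal{L}(\mathbf{s}^*)=0$, and lower-bound the Rayleigh quotient $\mathbf{r}^\trans\mathbf{J}\mathbf{J}^\trans\mathbf{r}$ by $\sigma_{\min}^2(\mathbf{J})\,\|\mathbf{r}\|^2$ before passing to the infimum. Your additional remarks on pointwise invertibility via $\det\mathbf{J}=1$ and on deferring the question of whether the infimum stays bounded away from zero are accurate and align with the paper's own discussion following the proof.
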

\begin{proof}
See Appendix \ref{section:appendix-Merit_PL}. This result, known in the literature for general sum-of-squares \citep{NesterovPolyak2006}, is included here for context and completeness.
\end{proof}
\vspace{-1.5mm}
\Cref{prop:DEER_is_PL} is important as it characterizes the \textit{flatness} of the merit function. If $\mu$ is very small in a certain region, this indicates that the norm of the gradient can be very small in that region, which can make gradient-based optimization inefficient.
\Cref{prop:DEER_is_PL} also links $\sigma_{\mathrm{min}}(\mathbf{J})$---important for characterizing the conditioning of $\mathbf{J}$---to the geometry of the merit function landscape.

\subsection{Merit Function PL Constant is Controlled by the Largest Lyapunov Exponent of Dynamics}\label{ssc:pl_from_lyapunov}
As stated earlier, the Largest Lyapunov Exponent is a commonly used way to define the (un)predictability of a nonlinear state space model. In order to proceed, we need to control more carefully how the product of Jacobian matrices in \eqref{eq:LLE_definition} behaves for finite-time products. We will assume that there exists a ``burn-in'' period where the norm of Jacobian products can transiently differ from the LLE. In particular, we assume that 
\begin{equation}\label{eq:LLE_regularity}
\forall t > 1, \ \forall k \geq 0, \ \forall \mathbf{s}, \qquad b \ e^{\lambda k } \le \ \left\| J_{t+k -1} J_{t+k-2} \cdots J_t \right\| \ \le \ \ a \ e^{\lambda k },  
\end{equation}
where $a \geq 1$ and $b \leq 1$. The constant $a$ quantifies the potential for transient growth—or overshoot—in the norm of Jacobian products before their long-term behavior emerges, while $b$ quantifies the potential for undershoot. 

\begin{thm}\label{theorem:PL-LLE}
Assume that the LLE regularity condition \eqref{eq:LLE_regularity} holds. Then the PL constant $\mu$ satisfies
\begin{equation}\label{eq:merit_jacobian_inverse_LLE}
     \dfrac{1}{a} \cdot \dfrac{e^{\lambda} - 1}{e^{\lambda T} - 1} \ \leq \ \sqrt{\mu} \ \leq \min\left(\ \frac{1}{b} \cdot \frac{1}{e^{\lambda (T-1)}}, 1\right).
\end{equation} 
\end{thm}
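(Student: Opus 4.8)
The plan is to translate the claim about the PL constant into a two-sided bound on the spectral norm of the inverse residual Jacobian, and then exploit the explicit closed form of that inverse. By \Cref{prop:DEER_is_PL} we have $\sqrt{\mu} = \inf_{\mathbf{s}} \sigma_{\min}(\mathbf{J}(\mathbf{s}))$. Since $\mathbf{J}(\mathbf{s})$ is block lower triangular with identity diagonal blocks, it is invertible for every $\mathbf{s}$, and $\sigma_{\min}(\mathbf{J}(\mathbf{s})) = 1/\|\mathbf{J}(\mathbf{s})^{-1}\|$ with $\|\cdot\|$ the spectral norm. Hence $\sqrt{\mu} = 1/\sup_{\mathbf{s}} \|\mathbf{J}(\mathbf{s})^{-1}\|$, and \eqref{eq:merit_jacobian_inverse_LLE} is equivalent to the estimate $b\,e^{\lambda(T-1)} \le \sup_{\mathbf{s}} \|\mathbf{J}(\mathbf{s})^{-1}\| \le a\,\tfrac{e^{\lambda T}-1}{e^{\lambda}-1}$. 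It therefore suffices to bound $\|\mathbf{J}(\mathbf{s})^{-1}\|$ uniformly in $\mathbf{s}$, using the regularity assumption \eqref{eq:LLE_regularity}.

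First I would write $\mathbf{J}^{-1}$ explicitly. Solving $\mathbf{J}\mathbf{x} = \mathbf{r}$ by forward substitution unrolls the recursion $x_t = r_t + J_t x_{t-1}$, giving $x_t = \sum_{k=1}^{t} P_{t,k}\, r_k$ with $P_{t,k} := J_t J_{t-1}\cdots J_{k+1}$ (and $P_{t,t} := I$). Thus the $(t,k)$ block of $\mathbf{J}^{-1}$ equals $P_{t,k}$ for $t \ge k$ and vanishes otherwise. Reindexing \eqref{eq:LLE_regularity} (the product $P_{t,k}$ consists of the $t-k$ factors $J_{k+1},\ldots,J_t$) yields the uniform estimate $b\,e^{\lambda(t-k)} \le \|P_{t,k}\| \le a\,e^{\lambda(t-k)}$, which is the only place the assumption enters.

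For the upper bound I would take any unit vector $\mathbf{v} = (v_1,\ldots,v_T)$, apply the triangle inequality blockwise to get $\|(\mathbf{J}^{-1}\mathbf{v})_t\| \le \sum_{k\le t} a\,e^{\lambda(t-k)}\|v_k\|$, and recognize the right-hand side as a causal convolution of the sequence $w_k := \|v_k\|$ with the kernel $g_j := a\,e^{\lambda j}$ supported on $0 \le j \le T-1$. Young's convolution inequality then gives $\|\mathbf{J}^{-1}\mathbf{v}\| \le \|g\|_1 \|w\|_2 = \bigl(a\sum_{j=0}^{T-1} e^{\lambda j}\bigr) = a\,\tfrac{e^{\lambda T}-1}{e^{\lambda}-1}$, uniformly in $\mathbf{s}$ and $\mathbf{v}$; equivalently one may invoke the Schur test on the block row- and column-sums. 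For the lower bound I would probe $\mathbf{J}^{-1}$ with a vector concentrated on the first block: taking $\mathbf{v} = (u,0,\ldots,0)$ with $u$ a top right-singular vector of $P_{T,1}$, the last block of $\mathbf{J}^{-1}\mathbf{v}$ is $P_{T,1} u$, so $\|\mathbf{J}^{-1}\| \ge \|P_{T,1}\| \ge b\,e^{\lambda(T-1)}$ for every $\mathbf{s}$. Combining the two bounds with $\sqrt{\mu} = 1/\sup_{\mathbf{s}}\|\mathbf{J}^{-1}\|$ gives the claim.

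I expect the upper bound to be the crux: the naive block row-sum of norms is not in general an upper bound on the spectral norm, so the argument hinges on exploiting the convolutional (Toeplitz-like) structure of the blockwise bound via Young's inequality (or the Schur test) to land exactly the geometric-series constant $a\,\tfrac{e^{\lambda T}-1}{e^{\lambda}-1}$ rather than something looser. The lower bound, the explicit inversion, and the reindexing of \eqref{eq:LLE_regularity} are routine by comparison. The degenerate case $\lambda = 0$ is recovered in the limit, where $\tfrac{e^{\lambda T}-1}{e^{\lambda}-1} \to T$.
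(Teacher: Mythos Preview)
Your proposal is correct and follows the same overall strategy as the paper: reduce $\sqrt{\mu}$ to a two-sided estimate on $\|\mathbf{J}^{-1}\|$, write the inverse explicitly, and control the Jacobian products via the LLE regularity assumption; the lower bound via probing the bottom-left block is identical to the paper's.

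The one organizational difference is precisely the step you flagged as the crux. The paper sidesteps your concern about block row-sums by writing $\mathbf{J}^{-1}=\sum_{k=0}^{T-1}\mathbf{N}^k$ as a finite Neumann series (with $\mathbf{N}$ the nilpotent strictly block-lower-triangular part) and applying the triangle inequality at the level of the full matrices: since $\mathbf{N}^k$ has a \emph{single} nonzero block subdiagonal, $\|\mathbf{N}^k\|_2=\max_t\|P_{t+k,t}\|\le a\,e^{\lambda k}$, and summing gives the geometric series directly. Your Young's-inequality (or Schur-test) route is a valid alternative that makes the Toeplitz structure of the blockwise bound explicit and would apply verbatim to any bound depending only on $t-k$; the paper's grouping-by-diagonal is shorter here and answers your own worry that ``naive block row-sums'' are not spectral-norm bounds.
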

\begin{proof}
See Appendix \ref{section:pl_from_lyapunov_appendix} for the full proof and discussion. We provide a brief sketch.
Because $\sigma_{\mathrm{min}}(\mathbf{J}) = \nicefrac{1}{\sigma_{\mathrm{max}}(\mathbf{J}^{-1})}$, it suffices to control $\| \mathbf{J}^{-1} \|_2$. We can write $\mathbf{J} = \mathbf{I} - \mathbf{N}$ where $\mathbf{N}$ is a nilpotent matrix. 
Thus, it follows that $\mathbf{J}^{-1} = \sum_{k=0}^{T-1} \mathbf{N}^k$. As we discuss further in Appendix \ref{section:pl_from_lyapunov_appendix}, the matrix powers $\mathbf{N}^k$ are intimately related to the dynamics of the system. The upper bound on $\|\mathbf{J}^{-1}\|_2$ follows after applying the triangle inequality and the formula for a geometric sum. The lower bound follows from considering $\|\mathbf{N}^{T-1}\|_2$.
\end{proof}
\vspace{-1.5mm}
Theorem \ref{theorem:PL-LLE} is our main result, offering a novel connection between the predictability $\lambda$ of a nonlinear state space model and the conditioning $\mu$ of the corresponding merit function, which affects whether the system can be effectively parallelized.
If the underlying dynamics are unpredictable ($\lambda > 0$), then the merit function quickly becomes poorly conditioned with increasing $T$, because the denominators of both the lower and upper bounds explode due to the exponentially growing factor. Predictable dynamics $\lambda < 0$ lead to good conditioning of the optimization problem, and parallel methods based on merit function minimization can be expected to perform well in these cases. 

The proof mechanism we have sketched upper and lower bounds $\| \mathbf{J}^{-1} \|_2$ in terms of norms of Jacobian products. We only use the assumption in \cref{eq:LLE_regularity} to express those bounds in terms of $\lambda$. As we discuss at length in \Cref{section:pl_from_lyapunov_appendix}, we can use different assumptions from \cref{eq:LLE_regularity} to get similar results. \Cref{theorem:PL-LLE} and its proof should be thought of as a framework, where different assumptions (which may be more or less relevant in different settings) can be plugged in to yield specific results.

\paragraph{Why Unpredictable Systems have Excessively Flat Merit Functions
}
Theorem \ref{theorem:PL-LLE} demonstrates that the merit function becomes extremely flat for unpredictable systems and long trajectories. This flatness poses a fundamental challenge for \textit{any} method that seeks to compute state trajectories by minimizing the merit function. We now provide further intuition to explain why unpredictability in the system naturally leads to a flat merit landscape.

Suppose that we use an optimizer to minimize the merit function \eqref{eq:residual_and_loss} for an unpredictable system until it halts with some precision. Let us further assume that the first state of the output of this optimizer following the initial condition is $\epsilon$-close to the true first state,~$\| s_1 - s^*_1 \| = \epsilon$.
Suppose also that the residuals for all times greater than one are precisely zero---in other words, the optimizer starts with a ``true'' trajectory starting from initial condition $s_1$. Then the overall residual norm is at most $\epsilon$, 
\begin{align*}
\| \mathbf{r}(\mathbf{s}) \|^2 
= \| s_1 - f(s_0) \|^2 
\leq \left( \| s_1 - s_1^* \| + \| s_1^* - f(s_0) \| \right)^2 
= \| s_1 - s_1^* \|^2 
= \epsilon^2.
\end{align*}
However, since $s_{t}$ and $ s_{t}^*$ are by construction both trajectories of an unpredictable system starting from slightly different initial conditions $s_1$ and $s_1^*$, the distance between them will grow exponentially as a consequence of \cref{eq:chaotic_time_horizon}. By contrast, predictable systems will have errors that shrink exponentially. This shows that changing the initial state $s_1$ by a small amount can lead to a massive change in the trajectory of an unpredictable system, but a \textit{tiny change} in the merit function. Geometrically, this corresponds to the merit function landscape for unpredictable systems having excessive flatness around the true solution (Figure \ref{fig:fig1}, bottom right panel). Predictable systems do not exhibit such flatness, since small residuals imply small errors. Theorem \ref{theorem:PL-LLE} formalizes this idea.

\subsection{Residual function Jacobian Inherits the Lipschitzness of the Nonlinear State Space Model}
In addition to the parameter $\mu$, which measures the conditioning of the merit function, the difficulty of minimizing the merit function is also influenced by the Lipschitz continuity of its Jacobian $\b J$. The following theorem establishes how the Lipschitz continuity of the underlying sequence model induces Lipschitz continuity in $\b J$.
\begin{thm}\label{theorem:Lipschitz}
If the dynamics of the underlying nonlinear state space model have $L$-Lipschitz Jacobians, i.e., 
\[
\forall \, t > 1, \ \ \ s, s' \in \mathbb{R}^D: \quad \|J_t(s) - J_t(s')\| \leq L \|s - s'\|,
\]
then the residual function Jacobian $\b J$ is also $L$-Lipschitz, with the same $L$. 
\end{thm}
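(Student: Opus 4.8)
The plan is to bound the spectral norm of the difference $\mathbf{J}(\mathbf{s}) - \mathbf{J}(\mathbf{s}')$ directly and to exploit the sparsity pattern of this difference. First I would note that in \eqref{eq:drds} the diagonal blocks are the constant matrix $I_D$ and do not depend on the state, so they cancel upon subtraction. The difference $\mathbf{M} \coloneqq \mathbf{J}(\mathbf{s}) - \mathbf{J}(\mathbf{s}')$ is therefore a strictly lower block-bidiagonal matrix whose only nonzero blocks are $\Delta_t \coloneqq J_t(s_{t-1}) - J_t(s_{t-1}')$, sitting in block-position $(t,t-1)$ for $t = 2, \ldots, T$ (recall that $J_t$ enters the state only through $s_{t-1}$). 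The goal thus reduces to showing $\|\mathbf{M}\| \le L\|\mathbf{s} - \mathbf{s}'\|$.

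The key structural observation is that each nonzero block $\Delta_t$ occupies a distinct block-row ($t$) and a distinct block-column ($t-1$): up to a relabeling of coordinates, $\mathbf{M}$ is block-diagonal. Concretely, for any $\mathbf{v} = \operatorname{vec}(v_1, \ldots, v_T)$ the output block $(\mathbf{M}\mathbf{v})_t$ equals $-\Delta_t v_{t-1}$ for $t \ge 2$ and vanishes for $t=1$. Since the Euclidean norm of a stacked vector decomposes blockwise, $\|\mathbf{M}\mathbf{v}\|^2 = \sum_{t=2}^T \|\Delta_t v_{t-1}\|^2 \le (\max_t \|\Delta_t\|^2)\sum_{t=2}^T \|v_{t-1}\|^2 \le (\max_t \|\Delta_t\|^2)\,\|\mathbf{v}\|^2$, so $\|\mathbf{M}\| \le \max_{2 \le t \le T} \|\Delta_t\|$.

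It then remains to invoke the hypothesis and pass from the block-max back to the full norm. The $L$-Lipschitz assumption applied block-by-block gives $\|\Delta_t\| = \|J_t(s_{t-1}) - J_t(s_{t-1}')\| \le L\|s_{t-1} - s_{t-1}'\|$, hence $\max_t \|\Delta_t\| \le L \max_t \|s_{t-1} - s_{t-1}'\| \le L\big(\sum_t \|s_{t-1} - s_{t-1}'\|^2\big)^{1/2} = L\|\mathbf{s} - \mathbf{s}'\|$, using $\max_t a_t \le (\sum_t a_t^2)^{1/2}$ for nonnegative $a_t$. Combining the two bounds yields $\|\mathbf{J}(\mathbf{s}) - \mathbf{J}(\mathbf{s}')\| \le L\|\mathbf{s} - \mathbf{s}'\|$, as claimed.

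I do not anticipate a genuine obstacle; the argument is short. The point that deserves care---and the reason the constant is \emph{exactly} $L$ rather than something growing with $T$ or $D$---is the second step: because the Lipschitz-perturbed blocks live in distinct rows and columns, the operator norm of their assembly is governed by the \emph{maximum} of the individual block norms rather than by a sum or a $\sqrt{T}$-weighted accumulation. The only mildly delicate manipulation is the final comparison between the blockwise quantity $\max_t \|s_{t-1} - s_{t-1}'\|$ and the global norm $\|\mathbf{s} - \mathbf{s}'\|$, a one-line inequality which in fact reveals that the stated bound is somewhat loose (a sharper constant is available in terms of the block-max norm).
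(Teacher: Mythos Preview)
Your proposal is correct and follows essentially the same route as the paper's proof: both observe that $\mathbf{J}(\mathbf{s}) - \mathbf{J}(\mathbf{s}')$ is a block-subdiagonal matrix whose spectral norm equals $\max_t \|\Delta_t\|$, then apply the blockwise Lipschitz hypothesis and bound $\max_t \|s_{t-1} - s_{t-1}'\|$ by $\|\mathbf{s} - \mathbf{s}'\|$. Your version is slightly more explicit in deriving $\|\mathbf{M}\| = \max_t \|\Delta_t\|$ via the decomposition of $\|\mathbf{M}\mathbf{v}\|^2$, whereas the paper simply asserts this fact about block-subdiagonal matrices.
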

\begin{proof}
See Appendix \ref{section:DEER_Lipschitz}.
\end{proof}
\vspace{-2.0mm}
Theorem \ref{theorem:Lipschitz} will be important for the analysis in Section \ref{sec:rates_of_DEER_convergence}, where we consider convergence rates.
Because Gauss-Newton methods rely on iteratively linearizing the dynamics (or equivalently the residual), they converge in a single step for linear dynamics $L=0$, and converge more quickly if the system is close to linear ($L$ is closer to $0$). 

\section{Rates of Convergence for Optimizing the Merit Function}\label{sec:rates_of_DEER_convergence}
In~\Cref{sec:DEER_Conditioning}, we established that the predictability of the nonlinear state space model directly influences the conditioning of the merit function. This insight is critical for analyzing \textit{any} optimization method used to compute trajectories via minimization of the merit function.

In this section, we apply those results to study the convergence behavior of the Gauss-Newton (DEER) algorithm for the merit function defined in \cref{eq:residual_and_loss}. See~\Cref{app:brief_overview_of_DEER} for a brief overview of DEER. We derive worst-case bounds on the number of optimization steps required for convergence. In addition, we present an average-case analysis of DEER that is less conservative than the worst-case bounds and more consistent with empirical observations.

\paragraph{DEER Always Converges Globally at a Linear Rate}
Although DEER is based on the Gauss-Newton method, which generally lacks global convergence guarantees, we prove that DEER always converges globally at a linear rate. This result relies on the problem’s specific hierarchical structure, which ensures that both the residual function Jacobian $\b J$ and its inverse are lower block-triangular. In particular we prove the following theorem

\begin{thm}\label{theorem:deer_converges_globally}
Let the DEER (Gauss--Newton) updates be given by~\cref{eq:GN_step}, and let $\mathbf{s}^{(i)}$ denote the $i$-th iterate. Let ${\mathbf{e}^{(i)} :=  \mathbf{s}^{(i)} - \mathbf{s}^* }$ denote the error at iteration $i$, and assume the regularity condition in \cref{eq:LLE_regularity}. Then the error converges to zero at a linear rate:
\[
\| \mathbf{e}^{(i)} \|_2 \leq \osw \, \beta^i \| \mathbf{e}^{(0)} \|_2,
\]
for some constant $\osw \geq 1$ independent of $i$, and a convergence rate $0 < \beta < 1$.
\end{thm}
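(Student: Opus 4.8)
The plan is to reduce the Gauss--Newton iteration to an exact linear error recursion and then exploit the block-triangular structure of the residual Jacobian. Starting from \cref{eq:GN_step} and using $\mathbf{r}(\mathbf{s}^*) = 0$ together with the fundamental theorem of calculus, I would write $\mathbf{r}(\mathbf{s}^{(i)}) = \mathbf{r}(\mathbf{s}^{(i)}) - \mathbf{r}(\mathbf{s}^*) = \overline{\mathbf{J}}^{(i)}\mathbf{e}^{(i)}$, where $\overline{\mathbf{J}}^{(i)} := \int_0^1 \mathbf{J}(\mathbf{s}^* + \tau \mathbf{e}^{(i)})\,\dif\tau$ is the averaged residual Jacobian along the segment from $\mathbf{s}^*$ to $\mathbf{s}^{(i)}$. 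Substituting into the update gives the closed-form recursion $\mathbf{e}^{(i+1)} = \mathbf{M}^{(i)}\mathbf{e}^{(i)}$ with $\mathbf{M}^{(i)} := \mathbf{J}(\mathbf{s}^{(i)})^{-1}\big(\mathbf{J}(\mathbf{s}^{(i)}) - \overline{\mathbf{J}}^{(i)}\big)$, so that $\mathbf{e}^{(i)} = \mathbf{M}^{(i-1)}\cdots\mathbf{M}^{(0)}\mathbf{e}^{(0)}$.

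The crucial structural observation is that each $\mathbf{M}^{(i)}$ is strictly block lower-triangular. Both $\mathbf{J}(\mathbf{s}^{(i)})$ and $\overline{\mathbf{J}}^{(i)}$ share the same diagonal blocks $I_D$ — the diagonal of the residual Jacobian in \cref{eq:drds} does not depend on $\mathbf{s}$ — so their difference has vanishing diagonal blocks and only subdiagonal blocks $-(J_t^{(i)} - \overline{J}_t^{(i)})$. Since $\mathbf{J}(\mathbf{s}^{(i)})^{-1}$ is block lower-triangular with unit diagonal (it equals $\sum_{k=0}^{T-1}\mathbf{N}^k$ as in the proof of \Cref{theorem:PL-LLE}), the product $\mathbf{M}^{(i)}$ is again strictly block lower-triangular. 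A product of $T$ strictly block lower-triangular matrices sharing the same $T$-block partition is identically zero, because each factor forces a strict decrease of block index and a chain of $T$ strict decreases is impossible among only $T$ blocks. Hence $\mathbf{e}^{(i)} = 0$ for all $i \geq T$: Gauss--Newton terminates exactly after at most $T$ steps, independent of the initialization. This is where the hierarchical structure — and nothing about a good initial guess — delivers \emph{global} convergence.

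To convert finite termination into the stated geometric bound, I would bound $\|\mathbf{M}^{(i)}\|_2$ uniformly in $i$. \Cref{prop:DEER_is_PL} and \Cref{theorem:PL-LLE} give $\|\mathbf{J}(\mathbf{s})^{-1}\|_2 = 1/\sigma_{\mathrm{min}}(\mathbf{J}(\mathbf{s})) \leq 1/\sqrt{\mu}$ uniformly over $\mathbf{s}$, while under a global bound $\|J_t(\cdot)\|\le G$ on the dynamics Jacobians (as holds, e.g., for tanh-RNNs) the subdiagonal blocks of $\mathbf{J}(\mathbf{s}^{(i)}) - \overline{\mathbf{J}}^{(i)}$ have norm at most $2G$, so $\|\mathbf{J}(\mathbf{s}^{(i)}) - \overline{\mathbf{J}}^{(i)}\|_2 \le 2G$ independent of $i$. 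Thus $\|\mathbf{M}^{(i)}\|_2 \le C := 2G/\sqrt{\mu}$ for all $i$, whence $\|\mathbf{e}^{(i)}\|_2 \le C^{i}\|\mathbf{e}^{(0)}\|_2$. Finite termination then lets me fix any $\beta \in (0,1)$ and set $\osw := \max\{1,\,(C/\beta)^{T-1}\}$: for $i \ge T$ the left side is zero, and for $i < T$ we have $C^i \le \osw\,\beta^i$, yielding $\|\mathbf{e}^{(i)}\|_2 \le \osw\,\beta^i\|\mathbf{e}^{(0)}\|_2$ with $\osw \ge 1$ independent of $i$, as claimed.

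The main obstacle is the uniform norm bound rather than the algebra of the recursion. Nilpotency gives global convergence essentially for free, but pinning down a constant $C$ independent of the iterate requires controlling $\|\mathbf{J}^{-1}\|$ globally — precisely the content of \Cref{theorem:PL-LLE} — and ruling out unbounded growth of the dynamics Jacobians along the trajectory of iterates. This is also exactly where predictability enters quantitatively: \Cref{theorem:PL-LLE} shows $1/\sqrt{\mu}$ stays bounded for predictable systems ($\lambda < 0$) but grows like $e^{\lambda T}$ for unpredictable ones ($\lambda > 0$), so $C$ and $\osw$ inflate exponentially in $T$ in the unpredictable regime, making the guaranteed rate vacuous in practice even though it remains linear in principle. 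A secondary subtlety is justifying the integral mean-value representation $\mathbf{r}(\mathbf{s}^{(i)}) - \mathbf{r}(\mathbf{s}^*) = \overline{\mathbf{J}}^{(i)}\mathbf{e}^{(i)}$ under only the smoothness assumed in \Cref{theorem:Lipschitz}, which is routine but should be stated explicitly.
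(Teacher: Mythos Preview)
Your proof is correct and takes a genuinely different route from the paper.

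The paper argues via a weighted norm. It introduces a diagonal weight $\mathbf{W} = \operatorname{Diag}(I_D, w^{2} I_D, \ldots, w^{2T}I_D)$ and shows that, in the induced norm $\|\cdot\|_W$, the error map $\mathbf{M}^{(i)} = \mathbf{J}^{-1}(\mathbf{J} - \mathbf{B}^{(i)})$ is a strict contraction with rate $\beta$. The point is that the weight $w^t$ suppresses the $k$-th block subdiagonal of $\mathbf{J}^{-1}$ by a factor $w^k$, so the Neumann sum $\sum_k \mathbf{N}^k$ becomes geometric in $we^\lambda$ rather than $e^\lambda$; choosing $w$ small enough forces contraction regardless of $\lambda$. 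Converting back to the Euclidean norm costs the condition number of $\mathbf{W}$, which is roughly $(1/w)^T$ and becomes the overshoot $\osw$.

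You instead exploit nilpotency directly: since $\mathbf{M}^{(i)}$ is strictly block lower-triangular, any product of $T$ such matrices vanishes, giving exact finite termination in $T$ steps (this is essentially Proposition~1 of \citet{gonzalez2024towards}, which the paper cites). You then bolt on a uniform bound $\|\mathbf{M}^{(i)}\|_2 \le C = 2G/\sqrt{\mu}$ and manufacture the geometric form post hoc for an arbitrary $\beta$. This is cleaner conceptually --- it separates the source of \emph{global} convergence (the hierarchical/nilpotent structure) from the source of the \emph{quantitative} constants (the Jacobian bounds) --- whereas the weighted-norm argument fuses the two. On the other hand, the paper's route yields an explicit one-parameter family $(\beta,\osw(\beta))$ tied directly to the dynamical constants $\rho$, $a$, $\lambda$, and makes it transparent when one can take $w=1$ and eliminate overshoot entirely (sufficiently strongly contracting systems; see the remark following the paper's proof and Appendix~\ref{sec:DEER_Global_Convergence_Contraction}). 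Both approaches produce an overshoot that is exponential in $T$ in the generic case, so neither is sharper in the regime that matters; your argument is just more elementary.
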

\begin{proof}
See Appendix \ref{sec:deer_always_converges}.
\end{proof}
\vspace{-2.0mm}
Theorem \ref{theorem:deer_converges_globally} is unexpected since, in general, Gauss-Newton methods do not enjoy global convergence.
The key caveat of this theorem is the multiplicative factor $\osw$, which can grow exponentially with the sequence length $T$. This factor governs the extent of transient error growth before the decay term $\beta^i$ eventually dominates.

Theorem \ref{theorem:deer_converges_globally} has several useful, practical consequences. First, when the nonlinear state space model is sufficiently contracting ($\lambda$ is sufficiently negative), then $\osw$ in  Theorem \ref{theorem:deer_converges_globally} can be made small, implying that in this case DEER converges with little-to-no overshoot (Appendix \ref{sec:DEER_Global_Convergence_Contraction}).

Theorem \ref{theorem:deer_converges_globally} also lets us establish key worst-case and average-case bounds on the number of steps needed for Gauss-Newton
to converge to within a given distance of the solution. In particular, when $\osw$ does not depend on the sequence length $T$, then \Cref{theorem:deer_converges_globally} implies Gauss-Newton will only require $\mathcal{O}\left( (\log T)^2 \right)$ total computational time, with one $\log$ factor coming from the parallel scan at each optimization step and the other coming from the total number of optimization steps needed. We elaborate on these points in Appendix \ref{section:worst_case_complexity}.

\paragraph{Size of DEER Basin of Quadratic Convergence}\label{ssc:quad_basin}
It is natural that DEER depends on the Lipschitzness of $\mathbf{J}$ since Gauss-Newton converges \emph{in one step} for linear problems, where $L = 0$.
In \Cref{sec:DEER_Conditioning}, we showed that the conditioning of the merit function, as measured by the PL-constant $\mu$, depends on the stability, or predictability, of the nonlinear dynamics. Thus, the performance of DEER depends on the ratio of the nonlinearity and stability of the underlying nonlinear state space model. Note that once $\b s$ is inside the basin of quadratic convergence, it takes $O(\log \log (1/\epsilon))$ steps to reach $\epsilon$ residual (effectively a constant number of steps). 

\begin{thm}\label{theorem:basin_size}
Let $\mu$ denote the PL-constant of the merit function, which Theorem \ref{theorem:PL-LLE} relates to the LLE $\lambda$. Let $L$ denote the Lipschitz constant of the Jacobian of the dynamics function $J(s)$. Then, $\nicefrac{2 \mu}{L}$ lower bounds the radius of the basin of quadratic convergence of DEER; that is, if
\begin{equation}\label{eq:basin_of_q_convergence_size}
||\b r(\b s^{(i)}) ||_2 <  \ \dfrac{2 \mu}{L},
\end{equation}
then $\b s^{(i)}$ is inside the basin of quadratic convergence. In terms of the LLE $\lambda$, it follows that if 
\begin{equation*}
    ||\b r(\b s^{(i)}) ||_2 <  \ \dfrac{2}{a^2 L} \cdot \left( \dfrac{e^{\lambda}-1}{e^{\lambda T}-1} \right)^2,
\end{equation*}
then $\b s^{(i)}$ is inside the basin of quadratic convergence.
\end{thm}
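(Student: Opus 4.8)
The plan is to exploit the fact that the residual Jacobian $\mathbf{J}$ is square and invertible everywhere (by \eqref{eq:drds} it is the identity minus a nilpotent block, so $\det \mathbf{J} = 1$), which means the Gauss--Newton update \eqref{eq:GN_step} coincides \emph{exactly} with Newton's method for the root-finding problem $\mathbf{r}(\mathbf{s}) = \mathbf{0}$. I would therefore run the standard quadratic-convergence analysis of Newton's method, but track the \emph{residual} norm $\|\mathbf{r}(\mathbf{s}^{(i)})\|$ directly rather than the error $\|\mathbf{e}^{(i)}\|$, since the theorem is phrased in terms of the residual and this choice is exactly what produces the clean $\mu/L$ threshold.

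First I would write the step as $\mathbf{d}^{(i)} := \mathbf{s}^{(i+1)} - \mathbf{s}^{(i)} = -\mathbf{J}(\mathbf{s}^{(i)})^{-1}\mathbf{r}(\mathbf{s}^{(i)})$, so that $\mathbf{J}(\mathbf{s}^{(i)})\,\mathbf{d}^{(i)} = -\mathbf{r}(\mathbf{s}^{(i)})$. Applying the fundamental theorem of calculus to $\mathbf{r}$ along the segment from $\mathbf{s}^{(i)}$ to $\mathbf{s}^{(i+1)}$ and substituting this identity to cancel the linear term gives
\[
\mathbf{r}(\mathbf{s}^{(i+1)}) = \int_0^1 \bigl[\mathbf{J}(\mathbf{s}^{(i)} + \tau \mathbf{d}^{(i)}) - \mathbf{J}(\mathbf{s}^{(i)})\bigr]\,\mathbf{d}^{(i)}\,\mathrm{d}\tau,
\]
where the exact cancellation of the first-order term is the mechanism behind quadratic convergence. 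I would then invoke Theorem \ref{theorem:Lipschitz}, which guarantees $\mathbf{J}$ is $L$-Lipschitz, to bound the integrand by $L\tau\|\mathbf{d}^{(i)}\|\cdot\|\mathbf{d}^{(i)}\|$, yielding $\|\mathbf{r}(\mathbf{s}^{(i+1)})\| \le \tfrac{L}{2}\|\mathbf{d}^{(i)}\|^2$.

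Next I would control $\|\mathbf{d}^{(i)}\|$ via the conditioning result. By Proposition \ref{prop:DEER_is_PL} and the definition \eqref{eq:mu_def}, every $\mathbf{J}(\mathbf{s})$ satisfies $\sigma_{\min}(\mathbf{J}(\mathbf{s})) \ge \sqrt{\mu}$, hence $\|\mathbf{J}(\mathbf{s}^{(i)})^{-1}\|_2 \le 1/\sqrt{\mu}$ and $\|\mathbf{d}^{(i)}\| \le \|\mathbf{r}(\mathbf{s}^{(i)})\|/\sqrt{\mu}$. Combining the two bounds gives the recursion $\|\mathbf{r}(\mathbf{s}^{(i+1)})\| \le \tfrac{L}{2\mu}\|\mathbf{r}(\mathbf{s}^{(i)})\|^2$. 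The basin claim then follows by a self-consistency argument: if $\|\mathbf{r}(\mathbf{s}^{(i)})\| \le \mu/L$, then $\|\mathbf{r}(\mathbf{s}^{(i+1)})\| \le \tfrac12\|\mathbf{r}(\mathbf{s}^{(i)})\| \le \mu/L$, so the residual at least halves and the iterate stays in the sublevel set $\{\|\mathbf{r}\|\le \mu/L\}$; iterating, the contraction factor $\tfrac{L}{2\mu}\|\mathbf{r}(\mathbf{s}^{(i)})\|$ remains below $1$ and the residual decays quadratically, which is the defining property of the basin. The second, LLE-dependent display follows immediately by substituting the lower bound $\sqrt{\mu} \ge \tfrac1a \cdot \tfrac{e^\lambda-1}{e^{\lambda T}-1}$ from Theorem \ref{theorem:PL-LLE}, giving $\mu/L \ge \tfrac{1}{a^2 L}\bigl(\tfrac{e^\lambda-1}{e^{\lambda T}-1}\bigr)^2$.

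The main obstacle is conceptual rather than computational: deciding to propagate the residual rather than the error. Tracking the error instead would leave $\sqrt{\mu}$ (not $\mu$) in the denominator and express the basin in terms of $\|\mathbf{e}^{(i)}\|$, forcing an extra step to lower-bound $\|\mathbf{r}\|$ by $\|\mathbf{e}\|$ through the smallest singular value of an \emph{averaged} Jacobian -- which is delicate, since averaging can shrink singular values. The residual formulation avoids this entirely; the only remaining care is to check the invariance of the sublevel set so that the one-step contraction genuinely iterates.
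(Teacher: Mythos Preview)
Your proposal is correct and essentially identical to the paper's proof: both use the integral remainder for $\mathbf{r}(\mathbf{s}^{(i+1)})$ after cancelling the linear term, apply the $L$-Lipschitzness of $\mathbf{J}$ from Theorem~\ref{theorem:Lipschitz} and the bound $\|\mathbf{J}^{-1}\|_2 \le 1/\sqrt{\mu}$ to obtain $\|\mathbf{r}(\mathbf{s}^{(i+1)})\|_2 \le \tfrac{L}{2\mu}\|\mathbf{r}(\mathbf{s}^{(i)})\|_2^2$, and then argue self-consistency of the sublevel set $\{\|\mathbf{r}\|\le \mu/L\}$. The paper in fact remarks that this is the standard Newton analysis from Boyd--Vandenberghe \S9.5.3 adapted to the residual, exactly as you do.
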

\begin{proof}
See Appendix \ref{sec:basin_of_quadratic_convergence}.
We make no claim about the originality of lower bounding the size of the basin of quadratic convergence in Gauss-Newton.
In fact, our proof of \Cref{theorem:basin_size} closely follows the convergence analysis of \emph{Newton's} method in Section 9.5.3 of \citet{boyd2004convex}.
Our contribution is we highlight the elegant way the predictability $\lambda$ and nonlinearity $L$ of a dynamical system influence an important feature of its merit function's landscape.
\end{proof}
\vspace{-2.0mm}

\section{Experiments}\label{sec:experiments}
We conduct experiments to support the theory developed above, demonstrating that predictability enables parallelization of nonlinear SSMs. To illustrate this point, we use Gauss-Newton optimization (aka DEER~\citep{lim2024parallelizing, danieli2023deeppcr}).
We provide more experimental details in Appendix \ref{app:exp_details}.
Our code is at~\url{https://github.com/lindermanlab/predictability_enables_parallelization}

\paragraph{The Convergence Rate Exhibits a Threshold between Predictable and Chaotic Dynamics}\label{ssc:exp_thresh}
Theorem \ref{theorem:PL-LLE} predicts a sharp phase transition in the conditioning of the merit function at $\lambda = 0$, which should be reflected in the number of optimization steps required for convergence. To empirically validate this prediction, we vary both the LLE and sequence length $T$ within a parametric family of recurrent neural networks (RNNs), and measure the number of steps DEER takes to converge. We generate mean-field RNNs following \citet{engelken2023lyapunov}, scaling standard normal weight matrices by a single parameter that controls their variance and therefore the expected LLE. In \Cref{fig:thresh}, we observe a striking correspondence between the conditioning of the optimization problem (represented by~$-\log \tilde{\mu}$, where $\tilde{\mu}$ is the lower bound for $\mu$ from \Cref{theorem:PL-LLE}) and the number of steps DEER takes to converge.
This relationship holds across the range of LLEs, $\lambda$, and sequence lengths, $T$.
There is a rapid threshold phenomenon around $\lambda=0$, which divides predictable from unpredictable dynamics, precisely as expected from \Cref{theorem:PL-LLE}.
As we discuss in Appendix \ref{section:empirical_DEER_scaling}, the correspondence between~$-\log \tilde{\mu}$ and the number of optimization steps needed for convergence can be explained by DEER iterates approaching the basin of quadratic convergence with linear rate.

In \Cref{app:gd_and_wallclock}, we provide additional experiments in this setting. We parallelize the sequential rollout with other optimizers like quasi-Newton and gradient descent, and observe that the number of steps these optimizers take to converge also scales with the LLE. We also record wallclock times on an H100, and observe that DEER is faster than sequential by an order of magnitude in predictable settings, but slower by an order of magnitude in unpredictable settings.

\begin{figure}[t]
    \centering
    \includegraphics[width=1.0\textwidth]{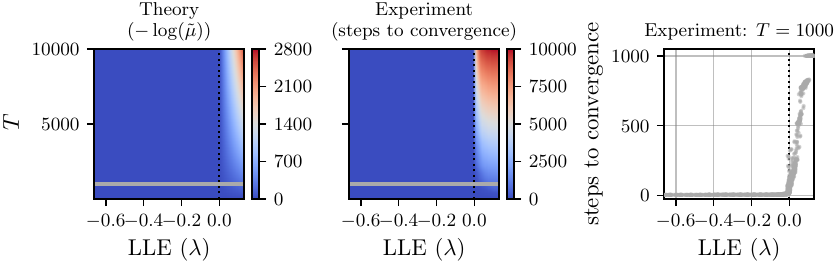}
    \caption{\textbf{Threshold phenomenon in DEER convergence based on system predictability.} In a family of RNNs, DEER has fast convergence for predictable systems and prohibitively slow convergence for chaotic systems. \textbf{Left (Theory):} We depict Theorem \ref{theorem:PL-LLE}, illustrating how the conditioning of the optimization problem degrades as $T$ and the LLE ($\lambda$) increase. \textbf{Center (Experiment):} We vary $\lambda$ across the family of RNNs, and observe a striking concordance in the number of DEER optimization steps empirically needed for convergence with our theoretical characterization of the conditioning of the optimization problem. \textbf{Right:} For 20 seeds, each with 50 different values of $\lambda$, we plot the relationship between $\lambda$ and the number of DEER steps needed for convergence for the sequence length $T=1000$ (gray line in left and center panels). We observe a sharp increase in the number of optimization steps at precisely the transition between predictability and unpredictability.}
    \label{fig:thresh}
\end{figure}

\paragraph{DEER can converge quickly for predictable trajectories passing through unpredictable regions}\label{ssc:2well}
DEER may still converge quickly even if the system is unpredictable in certain regions.
As long as the system is predictable on average, as indicated by a negative LLE, DEER can still converge quickly.
This phenomenon is why we framed Theorem \ref{theorem:PL-LLE} in terms of the LLE $\lambda$ and burn-in constants $a$, as opposed to a weaker result that assumes the system Jacobians have singular values less than one over the entire state space (see our discussion of condition \eqref{eq:LLE_regularity} vs. condition \eqref{eq:bounded_spectral_norm} in \Cref{section:pl_from_lyapunov_appendix}). 

To illustrate, we apply DEER to Langevin dynamics in a two-well potential (visualized in~\Cref{fig:two_well} for $D = 2$). The dynamics are stable within each well but unstable in the region between them. Despite this local instability, the system’s overall behavior is governed by time spent in the wells, resulting in a negative LLE and sublinear growth in DEER’s convergence steps with sequence length~$T$ (Figure~\ref{fig:two_well}, right subplot). Additional details and discussion are in Appendix~\ref{app:2wells_supp}.

Notably, prior works such as \citet{lim2024parallelizing} and \citet{gonzalez2024towards} initialized optimization from $\mathbf{s}^{(0)} = \mathbf{0}$, which lies entirely in the unstable region. 
Thus, our theoretical insights into
predictability and parallelizability suggest practical improvements for initialization.

\begin{figure}[t]
    \centering
    \includegraphics[width=1.0\textwidth]{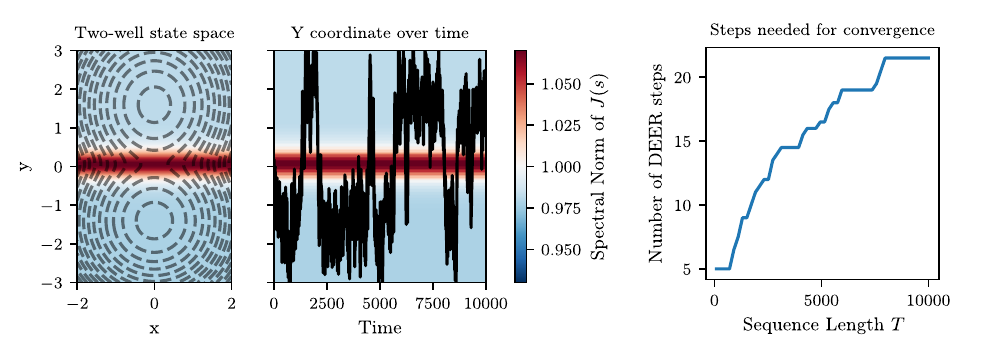}
    \caption{DEER converges quickly for Langevin dynamics in a two-well potential. \textbf{(Left)} An illustration of the two-well potential state space in $D=2$. We superimpose a contour plot of the potential on a color scheme showing the spectral norm of the dynamics Jacobian (blue indicates stability, red instability). \textbf{(Center)} A trace plot for the $y$-coordinate. The LLE of the system is $-0.0145$. \textbf{(Right)} We observe that this system, which has negative LLE, enjoys sublinear scaling in the sequence length $T$ in the number of DEER iterations needed to converge. We plot the median number of DEER steps to convergence over 20 random seeds.}
    \label{fig:two_well}
\end{figure}

\paragraph{Application: Chaotic Observers} Finally, we demonstrate a practical application of our theory in the efficient parallelization of chaotic observers. 
Observers are commonly used to reconstruct the full state of a system from partial measurements \citep{luenberger1979introduction,simon2006optimal}.
On nine chaotic flows from the \texttt{dysts} benchmark dataset \citep{dysts}, Table~\ref{tab:lle_deer_comparison} shows that while DEER converges prohibitively slowly on chaotic systems, it converges rapidly on stable observers of these systems, in accordance with our theory that predictability implies parallelizability. For more details, see Appendix~\ref{app:observer}.

\begin{table}[t]
\centering
\footnotesize
\caption{Comparison of system and observer LLEs and number of DEER steps for $T=30,000$ and Euler discretization step size $\Delta t = 0.01$. }
\vspace{.5em}
\begin{tabular}{lcccc}
\toprule
\textbf{System} & \makecell{\textbf{LLE} \\ \textbf{(System)}} & \makecell{\textbf{LLE} \\ \textbf{(Observer)}} & \makecell{\textbf{DEER Steps} \\ \textbf{(System)}} & \makecell{\textbf{DEER Steps} \\ \textbf{(Observer)}} \\
\midrule
ABC & 0.16 & -0.08 & 4243 & 3 \\
Chua's Circuit & 0.02 & -1.37 & 697 & 14  \\
Kawczynski-Strizhak & 0.01 & -3.08 & 29396 & 2  \\
Lorenz & 1.02 & -6.28 & 30000 & 3 \\
Nosé--Hoover Thermostat & 0.02 & -0.13 & 29765 & 3 \\
Rössler & 0.01 & -0.07 & 29288 & 7  \\
SprottB & 0.20 & -0.39 & 29486 & 2 \\
Thomas & 0.01 & -3.07 & 12747 & 7  \\
Vallis El Niño & 0.58 & -2.48 & 30000 & 3  \\
\bottomrule
\end{tabular}
\label{tab:lle_deer_comparison}
\end{table}

\section{Discussion}\label{sec:conclusion}
Recent work demonstrated that parallel computing hardware like GPUs can be used to rapidly compute state trajectories of nonlinear state space models (nSSMs) by recasting the trajectory as the solution to an optimization problem.
In this work, we provide the first precise characterization of the optimization problem's inherent difficulty, which determines if parallelization will
be faster in practice than sequential evaluation.
We show that the conditioning of the optimization problem is governed by the predictability of the underlying dynamics. We translate this insight into worst-case performance guarantees for specific optimizers, including Gauss–Newton (DEER). Our main takeaway is:
\emph{Predictable dynamics yield well-conditioned merit functions, enabling rapid convergence.
Unpredictable dynamics produce flat or ill-conditioned merit landscapes, resulting in slow convergence or numerical failure.}

\paragraph{Related Work}

While \citet{lim2024parallelizing} and \citet{danieli2023deeppcr} introduced parallel Newton methods, they did not prove their global convergence. \citet{gonzalez2024towards} proved global convergence, though only with worst-case bounds of $T$ optimization steps. These prior works did not address the relationship between system dynamics and conditioning, or establish global linear convergence rates.

Global convergence rates for Gauss-Newton are rare, despite the breadth of optimization literature \citep{NocedalWright, boyd2004convex, zhao2024GN, Nesterov2018}. Theorem~\ref{theorem:deer_converges_globally} establishes global convergence with linear rate for Gauss-Newton by leveraging our specific problem structure, though similar results have existed for \emph{local} linear convergence \citep{ortega1970iterative}, most famously the Newton-Kantorovich theorem \citep{Kantorovich1948}. 

Fifty years ago, \citet{hyafil1975bounds} and \citet{kung1976new} showed that linear recursions enjoy speedups from parallel processors while nonlinear recursions of rational functions with degree larger than one cannot.
These prescient works set the stage for our more general findings, which explicitly link the dynamical properties of the recursion to its parallelizability.
Parallel-in-time methods for continuous systems also have a long history \citep{nievergelt1964parallel, gander201550, ong2020applications}, with \citet{chartier1993parallele} showing that dissipative systems can be parallelized using multiple shooting. Furthermore, \citet{danieli2021multigrid} and \citet{de2025multigrid} study the CFL number for determining the usefulness of multigrid systems. Connecting this work with our paper is an interesting direction for future research.

More recently, several works have parallelized diffusion models via fixed-point iteration, including worst-case guarantees of $T$ steps \citep{Shih2023, tang2024accelerating, selvam2024self} as well as polylogarithmic rates in $T$ \citep{anari2024fast, chen2025accelerating}.
\citet{lu2025parasolver} develops quasi-Newton methods for sampling from diffusion models and, like us, shows a two-phase model of linear followed by quadratic convergence. 
Crucially, prior work has not focused on the merit function, which we can define for any discrete-time dynamical system and optimizer.

To our knowledge, no prior work connects the LLE of a dynamical system to the conditioning of the corresponding optimization landscape, as established in \Cref{theorem:PL-LLE}. In particular, we showed that systems with high unpredictability yield poorly conditioned (i.e., flat) merit functions, linking dynamical instability to optimization difficulty in a geometrically appealing way.

The centrality of parallel sequence modeling architectures like transformers \citep{vaswani2017attention}, deep SSMs \citep{gu2021efficiently, smith2022simplified, gu2023mamba}, and linear RNNs \citep{yang2024parallelizing} in modern machine learning underscores the need for our theoretical work. \citet{illusion} explored the question of parallelizability through the lens of circuit complexity, analyzing when deep learning models can solve structured tasks in constant depth. Their focus complements ours, and suggests an opportunity for synthesis in future work \citep{liu2025serial}.
\vspace{-1em}
\paragraph{Implications} 
Our work unlocks three key~implications for nonlinear state space models.

First, it provides a principled way to determine, \textit{a priori}, whether optimization-based parallelization of a given model is practical. In many robotic or control systems, particularly ones that are strongly dissipative, this insight can enable orders-of-magnitude speed-ups on GPUs~\citep{kolter2019learning,beik2024neural,jaffe2024learning,fan2022learning,sindhwani2018learning,sun2021learning,tsukamoto2021contraction,revay2023recurrent, davydov2024perspectives}. 

For example, the concurrent work of \citet{pmcmc} developed and leveraged quasi-Newton methods to parallelize Markov Chain Monte Carlo over the sequence length, attaining order-of-magnitude speed-ups. These speed-ups occurred because the quasi-Newton methods converged quickly in the settings considered. Suggestively, MCMC chains are contractive in many settings \citep{BouRabeeEberleZimmer2020, MangoubiSmith2021, DiaconisFreedman1999IteratedRandomFunctions}. 
A precise characterization of what makes an MCMC algorithm and target distribution predictable would provide useful guidance for when one should aim to parallelize MCMC over the sequence length.
Providing precise theoretical justification for parallelizing MCMC over the sequence length is an exciting avenue for future work.

Second, our results impact architecture \textit{design}. When constructing nonlinear dynamical systems in machine learning—such as novel RNNs—parallelization benefits are maximized when the system is
made predictable. Given the large body of work on training stable RNNs \citep{hochreiter1991, lstm,sutskever2013training,miller2019stable,erichson2020lipschitz,kozachkov2022rnns,sashimi2022, krotov2023new,engelken2023gradient,orvieto2023resurrecting,zucchet2024stability, farsang2025scaling}, many effective techniques already exist for enforcing stability or predictability during training. A common approach is to \textit{parameterize} the model’s weights so
that the model is always stable (see \Cref{app:ssm_param_contract}).

Notably, the concurrent work of \citet{farsang2025scaling} and \citet{danieli2025pararnn} develop nonlinear SSMs and train them with DEER, with \citet{danieli2025pararnn} scaling to very strong performance as a 7B parameter language model. Both  highlight the fast convergence of DEER, which is a result of the contractivity of their architectures: \citet{farsang2025scaling} parameterizes their LrcSSM to be contractive, while \citet{danieli2025pararnn} clip the norms of their weight matrices. Ensuring a negative largest Lyapunov exponent through parameterization guarantees parallelizability for the entire training process, enabling faster and more scalable learning. Our contribution provides a theoretical foundation for why stability is essential in designing efficiently parallelizable nonlinear SSMs.

Finally, our results have implications for the \emph{interpretation} of stable nSSMs.
Because each Gauss-Newton step in DEER is a linear dynamical system (LDS), and because we prove in \Cref{theorem:deer_converges_globally} that DEER converges in $\mathcal{O}(\log T)$ steps for a stable nSSM, we can interpret a stable nSSM as being equivalent to a ``stack'' of $\mathcal{O}(\log T)$ LDSs coupled by nonlinearities (cf. \Cref{sec:stack}).
\vspace{-1em}
\paragraph{Limitations and Future Work}
While this work focuses on establishing the fundamental concepts and theoretical foundations, several practical considerations arise for scaling to large systems. Notably, DEER incurs a significant memory footprint. While this issue can be alleviated through quasi-Newton methods \citep{gonzalez2024towards, pmcmc}, these approaches require more optimization steps to converge.
Studying quasi-Newton methods with our theory could provide new insight into the efficacy of these methods.

Overall, the theoretical tools developed here have immediate implications for parallelizing nonlinear systems, and they open several exciting avenues for future work.

\subsubsection*{Acknowledgments}
We thank members of the Linderman Lab for helpful feedback, particularly Noah Cowan, Henry Smith, Skyler Wu, and Etaash Katiyar. We also thank Federico Danieli, Will Merrill, Julien Siems, and Riccardo Grazzi for helpful discussions. We thank the anonymous NeurIPS reviewers whose feedback improved this paper.

X.G. acknowledges support from the Walter Byers Graduate Scholarship from the NCAA.
L.K. was a Goldstine Fellow at IBM Research while conducting this research.
S.W.L. was supported by fellowships from the Simons Collaboration on the Global Brain, the Alfred P. Sloan Foundation, and the McKnight Foundation. 

We thank Stanford University and the Stanford Research Computing Center for providing computational resources and support. Additional computations were performed on Marlowe \citep{Kapfer2025Marlowe}, Stanford University’s GPU-based Computational Instrument, supported by Stanford Data Science and Stanford Research Computing.

The authors have no competing interests to declare.

\bibliography{refs}
\bibliographystyle{unsrtnat}

\newpage 
\clearpage

\appendix
\section*{Appendix}
\startcontents[appendices]
\printcontents[appendices]{}{1}{}

\newpage 

\section{Brief Overview of DEER/DeepPCR}\label{app:brief_overview_of_DEER}
This section provides background on DEER/DeepPCR needed to support section \ref{sec:rates_of_DEER_convergence} of the main text. Other options for further background on DEER are sections 2-4 of \citet{gonzalez2024towards} and the corresponding blog post \citep{blog}.

We begin with a brief review of DEER/DeepPCR \citep{danieli2023deeppcr, lim2024parallelizing,gonzalez2024towards}. As mentioned in the introduction, the choice of optimizer is crucial for this procedure to outperform sequential evaluation in terms of wall clock time. Indeed, for this reason DEER uses the Gauss-Newton method (GN) to minimize the residual loss, since GN exhibits quadratic convergence rates near the optimum \citep{NocedalWright}. Recall from~\cref{eq:GN_step} that the $i$-th step of the DEER algorithm is,
\begin{align*}
\mathbf{s}^{(i+1)} = \mathbf{s}^{(i)} -\mathbf{J}(\mathbf{s}^{(i)})^{-1} \, \mathbf{r}(\mathbf{s}^{(i)}).  
\end{align*}
This step requires inverting the $TD \times TD$ matrix, $\mathbf{J}(\mathbf{s}^{(i)})$. Rather than explicitly inverting it, which is generally infeasible, DEER solves for the updates by running a linear time-varying recursion \citep{gonzalez2024towards}:
\begin{equation}\label{eq:DEER_Scan}
\begin{aligned}
\Delta s^{(i+1)}_t &= J^{(i)}_{t} \, \Delta s^{(i+1)}_{t-1} - r_t(s^{(i)}), \qquad \text{where} \qquad 
\Delta s^{(i+1)}_t &\coloneq s^{(i+1)}_t - s^{(i)}_t
\end{aligned}
\end{equation}
Unlike the standard sequential rollout, this recursion can be parallelized and computed in \( O(\log T) \) time using a parallel associative scan \citep{BlellochTR90}. When the number of optimization steps needed for DEER to converge to the true trajectory is relatively small, DEER can yield faster overall evaluation than the sequential approach. Since Gauss--Newton converges quadratically \emph{when the initial guess is sufficiently close to the true optimum} \citep{lim2024parallelizing, NocedalWright}, DEER potentially only requires a tiny number of iterations to converge. Our first key result is to prove that DEER always converges globally with linear rate, and will thus always reach this basin of quadratic convergence after sufficient time.

\paragraph{A note about notation}

The DEER quantities:
\begin{itemize}
    \item residual $\mathbf{r}(\mathbf{s}) \in \mathbb{R}^{TD}$
    \item Jacobian $\mathbf{J}(\mathbf{s}) \in \mathbb{R}^{TD \times TD}$
    \item merit function $\mathcal{L}(\mathbf{s}) \in \mathbb{R}$
\end{itemize}
are functions of the current guess for the trajectory $\mathbf{s} = \mathrm{vec}(s_1, \ldots, s_T) \in \mathbb{R}^{TD}$. 
As much as possible, we try to emphasize the dependence on the current guess for the trajectory, but sometimes we will drop the dependence for notational compactness.

\section{Merit Function is PL}\label{section:appendix-Merit_PL}
This section provides a proof of main text \Cref{prop:DEER_is_PL}. We first note that \Cref{prop:DEER_is_PL} applies to optimizing \emph{any} nonlinear sum of squares problem where $\mathcal{L}(\mathbf{s}) = \frac{1}{2} \| \mathbf{r}(\mathbf{s}) \|_2^2$, not just the $\mathbf{r}$ we consider in this paper (defined in \cref{eq:residual_and_loss}).
\begin{prop*}[\Cref{prop:DEER_is_PL}]
The merit function $\mathcal{L}(\mathbf{s})$  defined in \cref{eq:residual_and_loss} satisfies \cref{eq:PL} for
\begin{equation*}
    \mu := \inf_{\mathbf{s}} \sigma_{\mathrm{min}}^2(\mathbf{J}(\mathbf{s})).
\end{equation*}
\end{prop*}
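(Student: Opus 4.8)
The plan is to verify the Polyak--Łojasiewicz inequality \eqref{eq:PL} directly from the sum-of-squares structure of $\mathcal{L}$, using the fact that here the residual function Jacobian $\mathbf{J}(\mathbf{s})$ is a \emph{square} $TD \times TD$ matrix. The starting observation is that $\mathbf{r}(\mathbf{s}^*) = 0$ at the true trajectory (as noted below \eqref{eq:residual_and_loss}), so $\mathcal{L}(\mathbf{s}^*) = 0$ and the right-hand side of \eqref{eq:PL} reduces to $\mu\,\mathcal{L}(\mathbf{s})$. It therefore suffices to establish the pointwise bound $\tfrac{1}{2}\|\nabla\mathcal{L}(\mathbf{s})\|^2 \geq \mu\,\mathcal{L}(\mathbf{s})$ for every $\mathbf{s}$.

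First I would compute the gradient by the chain rule. Since $\mathcal{L}(\mathbf{s}) = \tfrac{1}{2}\mathbf{r}(\mathbf{s})^{\top}\mathbf{r}(\mathbf{s})$ and $\mathbf{J}(\mathbf{s}) = \partial\mathbf{r}/\partial\mathbf{s}$, we have $\nabla\mathcal{L}(\mathbf{s}) = \mathbf{J}(\mathbf{s})^{\top}\mathbf{r}(\mathbf{s})$, and hence
\[
\|\nabla\mathcal{L}(\mathbf{s})\|^2 = \mathbf{r}(\mathbf{s})^{\top}\,\mathbf{J}(\mathbf{s})\mathbf{J}(\mathbf{s})^{\top}\,\mathbf{r}(\mathbf{s}).
\]
I would then lower-bound this quadratic form by the smallest eigenvalue of the symmetric positive-semidefinite matrix $\mathbf{J}\mathbf{J}^{\top}$. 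Because $\mathbf{J}$ is square, the eigenvalues of $\mathbf{J}\mathbf{J}^{\top}$ are exactly the squared singular values of $\mathbf{J}$, so its smallest eigenvalue is $\sigma_{\mathrm{min}}^2(\mathbf{J})$, giving
\[
\|\nabla\mathcal{L}(\mathbf{s})\|^2 \ \geq \ \sigma_{\mathrm{min}}^2(\mathbf{J}(\mathbf{s}))\,\|\mathbf{r}(\mathbf{s})\|^2 \ = \ 2\,\sigma_{\mathrm{min}}^2(\mathbf{J}(\mathbf{s}))\,\mathcal{L}(\mathbf{s}).
\]
Taking the infimum over $\mathbf{s}$ and setting $\mu := \inf_{\mathbf{s}}\sigma_{\mathrm{min}}^2(\mathbf{J}(\mathbf{s}))$ then yields $\tfrac{1}{2}\|\nabla\mathcal{L}(\mathbf{s})\|^2 \geq \mu\,\mathcal{L}(\mathbf{s}) = \mu(\mathcal{L}(\mathbf{s}) - \mathcal{L}(\mathbf{s}^*))$ for all $\mathbf{s}$, which is exactly \eqref{eq:PL}.

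I expect no serious obstacle: this is the standard argument that nonlinear least squares is gradient-dominated with constant equal to the squared smallest singular value of the Jacobian. The one place that warrants care is the squareness of $\mathbf{J}$, which is what makes $\sigma_{\mathrm{min}}^2(\mathbf{J})$ coincide with the smallest eigenvalue of $\mathbf{J}\mathbf{J}^{\top}$ (for a strictly tall Jacobian the quadratic form could vanish on the left null space of $\mathbf{J}^{\top}$ and the bound would be trivial). Finally, the statement is only useful when this infimum is strictly positive, which is guaranteed here because $\mathbf{J} = \mathbf{I} - \mathbf{N}$ with $\mathbf{N}$ nilpotent is always invertible; quantifying how far $\mu$ stays from zero as $T$ grows is precisely the content of the later \Cref{theorem:PL-LLE}.
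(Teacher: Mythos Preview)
Your proposal is correct and essentially identical to the paper's own proof: both compute $\nabla\mathcal{L}=\mathbf{J}^\top\mathbf{r}$, substitute into the PL inequality to obtain $\mathbf{r}^\top\mathbf{J}\mathbf{J}^\top\mathbf{r}\ge\mu\,\mathbf{r}^\top\mathbf{r}$, and then identify $\mu$ with $\inf_{\mathbf{s}}\lambda_{\min}(\mathbf{J}\mathbf{J}^\top)=\inf_{\mathbf{s}}\sigma_{\min}^2(\mathbf{J})$. Your added remarks about the squareness of $\mathbf{J}$ and its invertibility via the nilpotent decomposition are consistent with (and slightly more explicit than) the paper's surrounding discussion.
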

\begin{proof}
Observe that 
\[ \nabla \mathcal{L}(\b s) = \b J(\b s)^\top \b r(\b s) \quad \text{and} \quad \mathcal{L}(\b s^*) = 0.\]
Substituting these expressions into the PL inequality in \cref{eq:PL} we obtain
\[ \b r^\top \, \b J(\b s) \, \b J(\b s)^\top \, \b r \ \geq \ \mu \, \b r^\top \b r.\]
Therefore, if $\b J$ is full rank, then the merit function $\mathcal{L}$ is $\mu$-PL, where
\begin{align*}
    \mu & = \inf_{\mathbf{s}} \lambda_{\min} \left(\mathbf{J}(\mathbf{s}) \mathbf{J}(\mathbf{s})^{\top} \right) \\ 
    & = \inf_{\mathbf{s} } \sigma^2_{\min} \left(\mathbf{J}(\mathbf{s}) \right) \\ 
\end{align*}
\end{proof}

To be precise, we must have $\mu > 0$ for $\mathcal{L}$ to satisfy the definition of PL. Therefore, a condition that must apply for $\mathcal{L}$ to be PL is that we must have $\inf_{\mathbf{s}} \sigma_{\min} \left(\mathbf{J}(\mathbf{s}) \right) > 0$.
We note that the proof strategy of \Cref{theorem:PL-LLE} ensures that $\inf_{\mathbf{s} \in \mathbb{R}^{TD}} \sigma_{\min} \left(\mathbf{J}(\mathbf{s}) \right) > 0$ if we assume \cref{eq:bounded_spectral_norm}, which holds for dynamical systems that are globally contracting.

By the chain rule, \cref{eq:bounded_spectral_norm} also holds for functions of the form $f(s) = \phi( W s)$, where $W \in \mathbb{R}^{D \times D}$ and $\phi$ is a scalar function with bounded derivative that is applied elementwise. In particular, such a function $\phi( W s)$ satisfies \cref{eq:bounded_spectral_norm} whether or not it is globally contracting. This function class is extremely common in deep learning (nonlinearities with bounded derivatives include $\tanh$, the logistic function and $\mathrm{ReLU}$).

In our statement and proof of \Cref{prop:DEER_is_PL}, we deliberately do not specify the set over which we take the infimum. The result is true regardless of what this set is taken to be.
The largest such set would be $\mathbb{R}^{TD}$, but other sets that could be of interest are the optimization trajectory $\{ \mathbf{s}^{(i)}, i \in \mathbb{N} \}$, or alternatively a neighborhood of the solution $\mathbf{s}^*$. We discuss further in \Cref{section:pl_from_lyapunov_appendix}.

\paragraph{Some more general notes on the PL inequality}

The PL inequality or gradient dominance condition is stated differently in different texts \citep{NesterovPolyak2006, fazel2018global, ChewiStromme2025BallisticPL_AIHP}.
We follow the presentation of \citet{karimi2016linear}. \citet{karimi2016linear} emphasizes that PL is often weaker than many other conditions that had been assumed in the literature to prove linear convergence rates.

We note that the PL inequality as stated in \cref{eq:PL} is not invariant to the scaling of $\mathcal{L}$. However, in Definition 3 of \citet{NesterovPolyak2006}, they broaden the definition to be gradient dominant of degree $p \in [1,2]$. The PL inequality we state in \cref{eq:PL} corresponds to gradient dominance of degree $2$. Note that gradient dominance of degree $1$ is scale-invariant.

\section{Merit Function PL Constant is Controlled by Largest Lyapunov Exponent of Model}\label{section:pl_from_lyapunov_appendix}
This section provides the proof of main text Theorem \ref{theorem:PL-LLE}.

\begin{thm*}[\Cref{theorem:PL-LLE}]
Assume that the LLE regularity condition from \cref{eq:LLE_regularity} holds. Then if $\lambda \neq 0$ the PL constant $\mu$ of the merit function in \eqref{eq:PL} satisfies
\begin{equation}\label{eq:recap_mu_LLE}
     \dfrac{1}{a} \cdot \dfrac{e^{\lambda} - 1}{e^{\lambda T} - 1} \ \leq \ \sqrt{\mu} \ \leq \ \min\left( \frac{1}{b} \cdot \frac{1}{e^{\lambda (T-1)}}, 1 \right).
\end{equation}
If $\lambda=0$, then the bounds are instead
\begin{equation*}
     \frac{1}{a T}\ \leq \ \sqrt{\mu} \ \leq \ \min\left( \frac{1}{b} \sqrt{\frac{2 D}{T +1}}, 1 \right).
\end{equation*}
\end{thm*}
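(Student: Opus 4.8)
The plan is to invoke \Cref{prop:DEER_is_PL}, which reduces the problem to controlling $\sigma_{\mathrm{min}}(\mathbf{J}(\mathbf{s}))$ uniformly in $\mathbf{s}$, since $\sqrt{\mu} = \inf_{\mathbf{s}} \sigma_{\mathrm{min}}(\mathbf{J}(\mathbf{s}))$. Using $\sigma_{\mathrm{min}}(\mathbf{J}) = 1/\sigma_{\mathrm{max}}(\mathbf{J}^{-1}) = 1/\|\mathbf{J}^{-1}\|_2$, the two inequalities in \eqref{eq:recap_mu_LLE} become a two-sided bound on $\|\mathbf{J}^{-1}\|_2$: the lower bound on $\sqrt{\mu}$ comes from an upper bound on $\|\mathbf{J}^{-1}\|_2$, and the upper bound on $\sqrt{\mu}$ from a lower bound on $\|\mathbf{J}^{-1}\|_2$. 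The key structural fact is that $\mathbf{J} = \mathbf{I} - \mathbf{N}$, where $\mathbf{N}$ is the strictly lower block-bidiagonal matrix carrying the Jacobians $J_t$ just below the diagonal. Since $\mathbf{N}$ is nilpotent with $\mathbf{N}^T = 0$, the Neumann series terminates, $\mathbf{J}^{-1} = \sum_{k=0}^{T-1}\mathbf{N}^k$, and a direct computation shows $\mathbf{N}^k$ has a single nonzero block band whose $(i,i-k)$ block is the ordered product $J_i J_{i-1}\cdots J_{i-k+1}$ of $k$ consecutive dynamics Jacobians.

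For the upper bound on $\|\mathbf{J}^{-1}\|_2$ (hence the lower bound on $\sqrt{\mu}$), I would first bound $\|\mathbf{N}^k\|_2$. Because $\mathbf{N}^k$ sends each input block $x_{i-k}$ to a single output block and distinct $k$-shifted block pairs occupy disjoint coordinate subspaces in both domain and codomain, its operator norm is controlled by the largest block norm: $\|\mathbf{N}^k\|_2 \le \max_i \|J_i\cdots J_{i-k+1}\|_2 \le a\, e^{\lambda k}$, where the last step applies the upper half of \eqref{eq:LLE_regularity} with start index $t = i-k+1$ and window length $k$ (and $i > k$ guarantees $t > 1$). The triangle inequality together with the geometric sum $\sum_{k=0}^{T-1} a\, e^{\lambda k} = a\,(e^{\lambda T}-1)/(e^{\lambda}-1)$ yields $\|\mathbf{J}^{-1}\|_2 \le a\,(e^{\lambda T}-1)/(e^{\lambda}-1)$, which inverts to the left inequality. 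As this bound is uniform in $\mathbf{s}$, it passes to the infimum defining $\mu$.

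For the lower bound on $\|\mathbf{J}^{-1}\|_2$ (hence the upper bound on $\sqrt{\mu}$), I would isolate the single corner block. Choosing test vectors $\mathbf{x}$ supported on the first block and $\mathbf{y}$ on the last block picks out $(\mathbf{J}^{-1})_{T,1}$, which receives a contribution only from $k = T-1$ and therefore equals $J_T J_{T-1}\cdots J_2$. Hence $\|\mathbf{J}^{-1}\|_2 \ge \|J_T\cdots J_2\|_2 \ge b\, e^{\lambda(T-1)}$ by the lower half of \eqref{eq:LLE_regularity} with $t=2$ and window length $T-1$, which inverts to the right inequality. The $\lambda = 0$ case follows either by evaluating the sum directly (it collapses to $aT$, while $b\,e^{0}=b$) or, as the statement notes, by applying L'H\^opital's rule to the $\lambda \neq 0$ bounds.

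The main obstacle I anticipate is the bookkeeping around the block-banded structure: correctly identifying the block entries of $\mathbf{N}^k$ as ordered products of consecutive Jacobians, justifying that the operator norm of a single block-band matrix is governed by the maximum of its block norms (which rests on the disjointness of the input and output supports), and aligning the index windows of these products with the quantifier ranges of \eqref{eq:LLE_regularity}. Each step is elementary, but the indexing must be handled carefully so that the constants $a$, $b$ and the exponents $\lambda T$ and $\lambda(T-1)$ land exactly as claimed.
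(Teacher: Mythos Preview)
Your proposal is correct and follows essentially the same approach as the paper: write $\mathbf{J}=\mathbf{I}-\mathbf{N}$ with $\mathbf{N}$ nilpotent, expand $\mathbf{J}^{-1}=\sum_{k=0}^{T-1}\mathbf{N}^k$, bound $\|\mathbf{N}^k\|_2\le a\,e^{\lambda k}$ via the regularity condition and sum geometrically for the upper bound on $\|\mathbf{J}^{-1}\|_2$, and extract the bottom-left block $J_T\cdots J_2$ via suitably supported test vectors for the lower bound. Your justification that $\|\mathbf{N}^k\|_2$ equals the maximal block norm along the $k$-th subdiagonal is slightly more explicit than the paper's ``straightforward linear algebra'' remark, but the argument is otherwise identical.
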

\begin{proof}
We present two proofs. A shorter, direct proof of
% \eqref{eq:merit_jacobian_inverse_LLE}
\eqref{eq:recap_mu_LLE}
assuming $\| \cdot \|$ is the standard Euclidean norm, and then a more general version in Appendix \ref{section:LLE_PL}, which will be useful later on.

Notice that the residual function Jacobian $\b J$ \eqref{eq:drds} can be written as the difference of the identity and a $T$-nilpotent matrix $\b N$, as 
\[\b J = \b I_{TD} - \b N \quad \text{with} \quad \b N^T = \b 0_{TD} \]
Because $\b N$ is nilpotent, 
the Neumann series for $\b J^{-1}$ is a finite sum:
\begin{equation}
 \b J^{-1} = (\b I_{TD} - \b N)^{-1} 
\;=\;
\sum_{k=0}^{T-1} \,\b N^k.
\end{equation}
Straightforward linear algebra also shows that the norms of the powers of this nilpotent matrix are bounded, which enables one to upper bound the inverse of the Jacobian
\begin{equation}\label{eq:final_upper_bound}
    \|\b N^k \|_2 \leq a \, e^{\lambda k} \quad \text{and therefore} \quad \|\b J^{-1} \|_2 \ \leq \ \sum_{k=0}^{T-1} \,\b \| \b N^k \|_2 \ \leq \ \sum_{k=0}^{T-1} a \, e^{\lambda k} = a \dfrac{1-e^{\lambda T}}{1-e^{\lambda}}.
\end{equation}
The powers of $\b N$ are closely related to the dynamics of the nonlinear state space model. We provide a dynamical interpretation below, in the paragraph "The dynamical interpretation of $\b N$ and its powers".

To lower bound $\| \mathbf{J}^{-1} \|_2$, we observe that by the SVD, a property of the spectral norm is that
\begin{equation}\label{eq:spectral_sup}
    \|\mathbf{J}^{-1}\|_2 
    = \sup_{\substack{\|x\|_2 = 1 \\ \|y\|_2 = 1}} 
      \, x^{\top}\mathbf{J}^{-1}y.
\end{equation}
We pick two unit vectors $u$ and $v$, both in $\mathbb{R}^{TD}$, that are zero everywhere other than where they need to be to pull out the bottom-left block of $\mathbf{J}^{-1}$ (i.e., the only non-zero block in $\mathbf{N}^{T-1}$, which is equal to $J_T J_{T-1} \hdots J_2$). Doing so, we get
\begin{equation*}
    u^T \mathbf{J}^{-1} v = \Tilde{u}^T (J_{T} J_{T-1} \hdots J_2) \Tilde{v},
\end{equation*}
where $\Tilde{u}$ and $\Tilde{v}$ are unit vectors in $\mathbb{R}^D$, and are equal to the nonzero entries of $u$ and $v$.

Note, therefore, that because of \cref{eq:spectral_sup}, it follows that
\begin{equation}\label{eq:first_lower_bound}
    \Tilde{u}^T \left(J_{T} J_{T-1} \hdots J_2\right) \Tilde{v} \ \leq \ \| \mathbf{J}^{-1} \|_2,
\end{equation}
i.e. we also have a \textbf{lower bound} on $\| \mathbf{J}^{-1} \|_2$.

Furthermore, choosing $\Tilde{u}$ and $\Tilde{v}$ to make
\begin{equation*}
    \Tilde{u}^T \left(J_{T} J_{T-1} \hdots J_2\right) \Tilde{v} = \| J_{T} J_{T-1} \hdots J_2 \|_2,
\end{equation*}
we can plug in this choice of $\Tilde{u}$ and $\Tilde{v}$ into \cref{eq:first_lower_bound}, to obtain
\begin{equation*}
    \| J_{T} J_{T-1} \hdots J_2 \|_2 \leq \| \mathbf{J}^{-1} \|_2.
\end{equation*}
Applying the regularity conditions \eqref{eq:LLE_regularity} for $k = T-1$ and $t=2$ we obtain
\begin{equation}\label{eq:final_lower_bound}
    b \ e^{\lambda (T-1) } \leq \| \mathbf{J}^{-1} \|_2.
\end{equation}

Because
\[ \lambda_{\min}\left(\b J \b J^\top\right) \ = \ \frac{1}{\|\b J^{-1} \|_2^2},  \]
the result for $\lambda \neq 0$ follows by applying \cref{eq:final_upper_bound} and \cref{eq:final_lower_bound} at all $\mathbf{s}^{(i)}$ along the optimization trajectory.

Note that any choice of $\Tilde{u}$ and $\Tilde{v}$ results in a lower bound, i.e. we could also have targeted the block identity matrices. So, it also follows that $1 \leq \| \mathbf{J}^{-1} \|_2$, and so
\begin{equation*}
    \max\left( b \ e^{\lambda (T-1) }  , 1\right) \leq \| \mathbf{J}^{-1} \|_2.
\end{equation*}

Finally, let us conclude by considering the case $\lambda = 0$. In this setting, the lower bound on $\sqrt{\mu}$ follows from L'H\^opital's rule. For the upper bound, we again must lower bound $\| \mathbf{J}^{-1} \|_2$. To do so, we leverage the relationship between spectral and Frobenius norms, namely that for an $n \times n$ matrix $A$,
\begin{equation}\label{eq:spec_frob}
    \frac{\| A \|_F}{\sqrt{n}} \leq \| A \|_2 \leq \| A \|_F.
\end{equation}

We can find the squared Frobenius norm, i.e. $\| \mathbf{J}^{-1} \|_F^2$, which is the sum of the squares of all of the entries. The squared Frobenius norm factors over the block structure of the matrix, i.e. $\| \mathbf{J}^{-1} \|_F^2$ is the sum of the squared Frobenius norms of the blocks. We know that each block has spectral norm lower bounded by $b$, so each block also has Frobenius norm lower bounded by $b$. Therefore, summing up over all of the blocks, it follows that
\begin{equation*}
    b^2 \frac{T (T+1)}{2} \leq \| \mathbf{J}^{-1} \|_F^2
\end{equation*}
and
\begin{equation*}
    \| \mathbf{J}^{-1} \|_F \leq \sqrt{TD} \| \mathbf{J}^{-1} \|_2.
\end{equation*}
Putting these equations together, it follows that
\begin{equation*}
    b \sqrt{\frac{T (T+1)}{2}} \leq \sqrt{T D} \| \mathbf{J}^{-1} \|_2
\end{equation*}
or
\begin{equation*}
    b \sqrt{\frac{T+1}{2 D}} \leq \| \mathbf{J}^{-1} \|_2,
\end{equation*}
and so the upper bound on $\sqrt{\mu}$ when $\lambda=0$ follows from taking reciprocals.
\end{proof}

The above proof sheds light on how many dynamical system properties fall out of the structure of $\mathbf{J}(\mathbf{s})$, which we now discuss further. 

\paragraph{Discussion of why small $\sigma_{\mathrm{min}}(\mathbf{J}(\mathbf{s}))$ leads to ill-conditioned optimization}

Recall that our goal is to find a lower bound on the smallest singular value of $\mathbf{J}(\mathbf{s})$, which we denote by $\sigma_{\mathrm{min}}(\mathbf{J}(\mathbf{s}))$. This quantity controls the difficulty of optimizing $\mathcal{L}$. For example, the Gauss-Newton update is given by $\mathbf{J}(\mathbf{s})^{-1} \mathbf{r}(\mathbf{s})$. Recall that
\begin{align*}
    \sigma_{\mathrm{max}}\left(\mathbf{J}(\mathbf{s})^{-1}\right) & = \nicefrac{1}{\sigma_{\mathrm{min}}\left(\mathbf{J}(\mathbf{s})\right)} \\
    & = \| \mathbf{J}(\mathbf{s})^{-1} \|_2.
\end{align*}
Recall that an interpretation of the spectral norm $\| \mathbf{J}(\mathbf{s}) \|_2$ is how much multiplication by $\mathbf{J}(\mathbf{s})$ can increase the length of a vector. 
Therefore, we see that very small values of $\sigma_{\mathrm{min}}(\mathbf{J}(\mathbf{s}))$ result in large values of  $\| \mathbf{J}(\mathbf{s})^{-1} \|_2$, which means that $ \| \mathbf{J}(\mathbf{s})^{-1} \mathbf{r}(\mathbf{s}) \|_2$ can become extremely large as well, and small perturbations in $\mathbf{r}$ can lead to very different Gauss-Newton updates (i.e. the problem is ill-conditioned, cf. \citet{NocedalWright} Appendix A.1).

Furthermore, we observe that in the $\lambda > 0$ (unpredictable) setting and the large $T$ limit, the upper and lower bounds in \eqref{eq:recap_mu_LLE} are tight, as they are both $\mathcal{O}(e^{\lambda (T-1)})$. Thus, the upper and lower bounds together ensure that unpredictable dynamics will suffer from degrading conditioning.

In contrast, in the $\lambda < 0$ (predictable) setting, the lower bound on $\sqrt{\mu}$ converges to $\frac{1 - e^{\lambda}}{a}$, which is bounded away from zero and \emph{independent of the sequence length}. Thus, in predictable dynamics, there is a lower bound on $\sigma_{\min}(\mathbf{J})$ or, equivalently, an upper bound on $\sigma_{\max}(\mathbf{J}^{-1})$.

\paragraph{The dynamical interpretation of $\mathbf{N}$ and its powers}

As shown in the above proof, 
\begin{equation*}
 \b J(\mathbf{s})^{-1} = (\b I_{TD} - \b N(\mathbf{s}))^{-1} 
\;=\;
\sum_{k=0}^{T-1} \,\b N(\mathbf{s})^k.
\end{equation*}

It is worth noting explicitly that
\begin{align}\label{eq:N_definition}
    \mathbf{N}(\mathbf{s}) = \begin{pmatrix}
        0 & 0 & \hdots & 0 & 0\\
        J_2 & 0 & \hdots & 0 & 0\\
        \vdots & \vdots & \ddots & \vdots & \vdots \\
        0 & 0 & \hdots & 0 & 0 \\
        0 & 0 & \hdots & J_{T} & 0 \\
    \end{pmatrix} \quad \text{where} \quad J_{t} &\coloneq \frac{\partial f_t}{\partial s_{t-1}}(s_{t-1}),
\end{align}
i.e. $\mathbf{N}(\mathbf{s})$ collects the Jacobians of the dynamics function along the first lower diagonal.
Each matrix power $\mathbf{N}^k$ therefore collects length $k$ products along the $k$th lower diagonal.
Thus, multiplication by $ \b J(\mathbf{s})^{-1} = 
\sum_{k=0}^{T-1} \,\b N(\mathbf{s})^k$ recovers running forward a linearized form of the dynamics, which is one of the core insights of DeepPCR and DEER \citep{danieli2023deeppcr, lim2024parallelizing}.

Concretely, in the setting where $T=4$, we have
\begin{align*}
\mathbf{N}^{0} & = 
    \begin{pmatrix}
    I_D & 0 & 0 & 0 \\
    0 & I_D & 0 & 0 \\
    0 & 0 & I_D & 0 \\
    0 & 0 & 0 & I_D
    \end{pmatrix} \\
    \mathbf{N} & = 
    \begin{pmatrix}
    0 & 0 & 0 & 0 \\
    J_2 & 0 & 0 & 0 \\
    0 & J_3 & 0 & 0 \\
    0 & 0 & J_4 & 0
    \end{pmatrix} \\
    \mathbf{N}^2 & = \begin{pmatrix}
    0 & 0 & 0 & 0 \\
    0 & 0 & 0 & 0 \\
    J_3 J_2 & 0 & 0 & 0 \\
    0 & J_4 J_3 & 0 & 0
    \end{pmatrix} \\
    \mathbf{N}^3 & = \begin{pmatrix}
    0 & 0 & 0 & 0 \\
    0 & 0 & 0 & 0 \\
    0 & 0 & 0 & 0 \\
    J_4 J_3 J_2 & 0 & 0 & 0
    \end{pmatrix} \\
    \mathbf{J}^{-1} & = \begin{pmatrix}
    I_D & 0 & 0 & 0 \\
    J_2 & I_D & 0 & 0 \\
    J_3 J_2 & J_3 & I_D & 0 \\
    J_4 J_3 J_2 & J_4 J_3 & J_4 & I_D
    \end{pmatrix} \\
\end{align*}

\paragraph{Connection to semiseparable matrices and Mamba2}

Having depicted the structure of $\mathbf{J}^{-1}$, we note the connection between $\mathbf{J}^{-1}$ in this paper and the attention or sequence mixer matrix $M$ in \citet{mamba2}, which introduced the Mamba2 architecture (see equation 6 or Figure 2 of \citet{mamba2} for the form of $M$, and compare with $\mathbf{J}^{-1}$ above).

Mamba2 is a deep learning sequence modeling architecture. Its sequence mixer in each layer has at its core a linear dynamical system. \citet{mamba2} observe that while a linear dynamical system (LDS) can be evaluated recurrently (sequentially) or in parallel (for example, with a parallel scan), it can also be evaluated multiplying the inputs to the LDS by the matrix $M$. Since each DEER iteration is also a linear dynamical system, with the transition matrices given by $\{ J_t \}_{t=2}^T$, it follows that $M$ in \citet{mamba2} and $\mathbf{J}^{-1}$ in our paper are the same object, and so results about these objects from these two papers transfer.

In particular, we observe that, in the language from \citet{mamba2}, the $\mathbf{J}^{-1}$ we consider in this paper is \emph{$D$-semiseparable} (see Definition 3.1 from \citet{mamba2}). Thus, any efficient, hardware-aware algorithms and implementations developed for $D$-semiseparable matrices could also be applied to accelerating each iteration of DEER, though we note that \citet{mamba2} focus on the 1-semiseparable setting, which they call a \emph{state space dual} or \emph{SSD} layer. In any case, using these connections to accelerate each iteration of DEER and related parallel Newton algorithms from a systems implementation perspective would be an interesting direction for future work.

\paragraph{A framing of Theorem \ref{theorem:PL-LLE} based on global bounds on $\| J_t  \|_2$}

We chose to prove Theorem \ref{theorem:PL-LLE} using condition \eqref{eq:LLE_regularity} in order to highlight the natural connection between the smallest singular value of $\mathbf{J}$ and system stability (as measured by its LLE). However, an assumption with a different framing would be to impose a uniform bound on the spectral norm of the Jacobian over the entire state space:
\begin{equation}\label{eq:bounded_spectral_norm}
    \sup_{s \in \mathbb{R}^D} \| J(s) \|_2 \leq \rho .
\end{equation}
For $\rho < 1$, this assumption corresponds to global contraction of the dynamics \citep{lohmiller1998contraction}.

If we replace the LLE regularity condition \eqref{eq:LLE_regularity} with the global spectral norm bound \eqref{eq:bounded_spectral_norm} in the proof of Theorem \ref{theorem:PL-LLE}, we obtain that the PL constant is bounded away from zero, i.e.
\begin{equation*}
    \frac{1}{a} \cdot \frac{\rho - 1}{\rho^T - 1} \leq 
    \sqrt{\inf_{\mathbf{s} \in \mathbb{R}^{TD}} \sigma^2_{\min}(\mathbf{J}(\mathbf{s}))}.
\end{equation*}
In particular, if the dynamics are contracting everywhere (i.e., $\rho < 1$), the condition \eqref{eq:bounded_spectral_norm} guarantees good conditioning of $\mathbf{J}$ throughout the entire state space.

\paragraph{Discussion of the LLE regularity conditions}

The LLE regularity conditions in \cref{eq:LLE_regularity} highlight the more natural ``average case'' behavior experienced along actual trajectories $\mathbf{s} \in \mathbb{R}^{TD}$. 
This ``average case'' behavior is highlighted, for example, by our experiments with the two-well system (cf.~Section~\ref{sec:experiments} and Appendix \ref{app:2wells_supp}), where even though a global upper bound on $\| J_t(s_t) \|_2$ over all of state space would be greater than $1$ (i.e., there are unstable regions of state space), we observe fast convergence of DEER because the system as a whole has negative LLE (its trajectories are stable on average). 

We also note the pleasing relationship the LLE regularity conditions have with the definition of the LLE given in \cref{eq:LLE_definition}.
Note that in the LLE regularity conditions in \cref{eq:LLE_regularity}, the variable $k$ denotes the sequence length under consideration. Taking logs and dividing by $k$, we therefore obtain
\begin{equation*}
    \frac{\log b}{k} + \lambda \leq \frac{1}{k} \log\left( \left\| J_{t+k -1} J_{t+k-2} \cdots J_t \right\| \right) \leq \frac{\log a}{k} + \lambda.
\end{equation*}

Therefore, as $k \to T$, and as $T \to \infty$ (i.e., we consider longer and longer sequences), we observe that the finite-time estimates of the LLE converge to the true LLE $\lambda$.

We observe that as $\mathbf{s}^{(i)}$ approaches the true solution $\mathbf{s}^{*}$, the regularity conditions in \cref{eq:LLE_regularity} become increasingly reasonable. Since any successful optimization trajectory must eventually enter a neighborhood of $\mathbf{s}^{*}$, it is natural to expect these conditions to hold there. In fact, rather than requiring the regularity conditions over all of state space or along the entire optimization trajectory, one could alternatively assume that they hold within a neighborhood of $\mathbf{s}^{*}$, and prove a corresponding version of \Cref{theorem:PL-LLE}.

We now do so, using the additional assumption that $\mathbf{J}$ is $L$-Lipschitz.

\begin{thm}\label{theorem:local_PL_LLE}
    If $\mathbf{J}$ is $L$-Lipschitz, then there exists a ball of radius $R$ around the solution $\b s^*$, denoted $B(\b s^*,R)$, such that
    \begin{equation*}
        \forall \b s \ \in \ B(\b s^*,R) \qquad 
        | \sigma_{\min}(\mathbf{J}(\mathbf{s})) - \sigma_{\min}(\mathbf{J}(\mathbf{s^*})) | \ \leq \ LR
    \end{equation*}
\end{thm}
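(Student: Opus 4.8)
The plan is to show that the map $\mathbf{s} \mapsto \sigma_{\min}(\mathbf{J}(\mathbf{s}))$ inherits the $L$-Lipschitz continuity of $\mathbf{J}$ itself, after which the stated bound is immediate for \emph{any} radius $R$. The only ingredient I need beyond the hypothesis is the standard fact that the smallest singular value of a matrix is a $1$-Lipschitz function with respect to the spectral norm.

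First I would establish this perturbation bound. Using the variational characterization $\sigma_{\min}(A) = \min_{\|x\|_2 = 1} \|Ax\|_2$ (valid since $\mathbf{J}$ is square of size $TD \times TD$), for any fixed unit vector $x$ the triangle inequality gives $\|Ax\|_2 \geq \|Bx\|_2 - \|(A-B)x\|_2 \geq \sigma_{\min}(B) - \|A-B\|_2$. Minimizing the left-hand side over unit $x$ yields $\sigma_{\min}(A) \geq \sigma_{\min}(B) - \|A-B\|_2$, and swapping the roles of $A$ and $B$ gives the symmetric statement
\begin{equation*}
|\sigma_{\min}(A) - \sigma_{\min}(B)| \leq \|A - B\|_2 ,
\end{equation*}
which is exactly Weyl's inequality for singular values.

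Then I would simply compose the two estimates. Fix any $\mathbf{s} \in B(\mathbf{s}^*, R)$, so that $\|\mathbf{s} - \mathbf{s}^*\|_2 \leq R$. Applying the perturbation bound with $A = \mathbf{J}(\mathbf{s})$ and $B = \mathbf{J}(\mathbf{s}^*)$, and then the $L$-Lipschitz hypothesis on $\mathbf{J}$ (which Theorem \ref{theorem:Lipschitz} shows is inherited from the dynamics), gives
\begin{equation*}
|\sigma_{\min}(\mathbf{J}(\mathbf{s})) - \sigma_{\min}(\mathbf{J}(\mathbf{s}^*))| \leq \|\mathbf{J}(\mathbf{s}) - \mathbf{J}(\mathbf{s}^*)\|_2 \leq L \|\mathbf{s} - \mathbf{s}^*\|_2 \leq LR ,
\end{equation*}
which is the claim.

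I expect no substantive obstacle here: the existential phrasing (``there exists a ball of radius $R$'') is in fact satisfied by \emph{every} $R$, since the argument never uses how large $R$ is. The only point requiring any care is the perturbation lemma for $\sigma_{\min}$, which I would either cite as Weyl's inequality or justify with the two-line variational argument above. A minor but worthwhile remark is that the induced operator norm in the $L$-Lipschitz hypothesis and the spectral norm in the perturbation bound are the same $\|\cdot\|_2$, so the two inequalities chain directly with no norm-conversion constant.
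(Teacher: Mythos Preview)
Your proof is correct and follows essentially the same approach as the paper: both establish the $1$-Lipschitz perturbation bound $|\sigma_{\min}(A) - \sigma_{\min}(B)| \leq \|A-B\|_2$ (the paper attributes it to the reverse triangle inequality, you derive it from the variational characterization---these are the same argument), then chain it with the $L$-Lipschitz hypothesis on $\mathbf{J}$. Your remark that the result holds for \emph{every} $R$, not just some particular one, is also correct and matches what the paper's proof actually shows.
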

\begin{proof}
    The argument parallels the proof of Theorem 2 in \citet{liu2022loss}.

A fact stemming from the reverse triangle inequality is that for any two matrices $\b A$ and $\b B$,
\[
\sigma_{\min}(\b A) \;\geq\; \sigma_{\min}(\b B) - \|\b A - \b B\|\,.
\]

Applying this with $\b A = \b J(\b s)$ and $\b B = \b J(\b s^*)$, we obtain
\[
\sigma_{\min}(\b J(\b s)) \;\geq\; \sigma_{\min}(\b J(\b s^*)) - \|\b J(\b s) - \b J(\b s^*)\|\,.
\]

If the Jacobian $\b J(\cdot)$ is $L$-Lipschitz, then
\[
\|\b J(\b s) - \b J(\b s^*)\| \;\leq\; L \|\b s - \b s^*\|\,.
\]

Combining, we get
\[
\sigma_{\min}(\b J(\b s)) 
\;\geq\; \sigma_{\min}(\b J(\b s^*)) - L \|\b s - \b s^*\|\,
\]
and 
\[
\sigma_{\min}(\b J(\b s^*)) 
\;\geq\; \sigma_{\min}(\b J(\b s)) - L \|\b s - \b s^*\|\,,
\]
which gives
\begin{equation*}
    \sigma_{\min}(\b J(\b s^*)) - L \|\b s - \b s^*\| \leq \sigma_{\min} (\mathbf{J}(\mathbf{s})) \leq \sigma_{\min}(\b J(\b s^*)) + L \|\b s - \b s^*\|. 
\end{equation*}
Ensuring that $\|\b s - \b s^*\| \leq R$ completes the proof.
\end{proof}
A consequence of \Cref{theorem:local_PL_LLE} is that if the system is unpredictable, then there exists a finite ball around $\mathbf{s}^*$ where the conditioning of the merit function landscape is provably bad. 

As a concrete example, suppose that $\sigma_{\min}(\b J(\b s^*)) = \epsilon$ and $L = 1$. Then \textit{at best}, the PL constant of the loss function inside the ball $B(\b s^*,R)$ is $\epsilon + R$. If $\epsilon$ is small (bad conditioning) then $R$ can be chosen such that the PL constant inside the ball $B(\b s^*,R)$ is also small. 

\paragraph{Controlling $\sigma_{\max}(\mathbf{J})$}

In our proof of \Cref{theorem:PL-LLE}, we proved upper and lower bounds for $\sigma_{\min}(\mathbf{J}(\mathbf{s}))$ that depended on the sequence length $T$. We can also prove upper and lower bounds for $\sigma_{\max}(\mathbf{J}(\mathbf{s}))$, but these do not depend on the sequence length.

Assuming condition \eqref{eq:bounded_spectral_norm}, an upper bound on $\sigma_{\mathrm{max}}(\mathbf{J})$ is straightforward to compute via the triangle inequality, 
\begin{align*}
    \sigma_{\max}(\mathbf{J}) & = \| \mathbf{J} \|_2 \\
    & = \| \mathbf{I} - \mathbf{N} \|_2 \\
    & \leq 1 + \| \mathbf{N} \|_2.
\end{align*}
Recalling the definition of $\mathbf{N}$ in \eqref{eq:N_definition}, we observe that it is composed of $\{ J_t \}$ along its lower block diagonal, and so we have
\begin{align*}
    \| \mathbf{N}(\mathbf{s}) \|_2 & = \sup_t \| J_t(s_t) \| \\
    \sup_{\mathbf{s} \in \mathbb{R}^{TD}} \| \mathbf{N} (\b s) \|_2 & = \sup_{s \in \mathbb{R}^D} \| J(s) \| \\
\end{align*}
Elaborating, for a particular choice of trajectory $\mathbf{s} \in \mathbb{R}^{TD}$, $\| \mathbf{N}(\mathbf{s}) \|_2$ is controlled by the maximum spectral norm of the Jacobians $J_t(s_t)$ along this trajectory.
Analogously, $\sup_{\mathbf{s} \in \mathbb{R}^{TD}} \| \mathbf{N}(\mathbf{s}) \|_2$---i.e., the supremum of the spectral norm of $\mathbf{N}(\mathbf{s})$ over all possible trajectories $\mathbf{s} \in \mathbb{R}^{TD}$, i.e. the optimization space---is upper bounded by $\sup_{s \in \mathbb{R}^D } \| J(s) \|_2$, i.e. the supremum of the spectral norm of the system Jacobians over the state space $\mathbb{R}^D$.

Thus, it follows that
\begin{equation}\label{eq:ub_sigma_max}
    \sigma_{\max}(\mathbf{J}) \leq 1 + \rho.
\end{equation}
Importantly, the upper bound on $\sigma_{\max}(\mathbf{J})$ does not scale with the sequence length $T$.

To obtain the lower bound on $\sigma_{\max}(\mathbf{J})$, we notice that it has all ones along its main diagonal, and so simply by using the unit vector $\mathbf{e}_1$, we obtain
\begin{equation}\label{eq:lb_sigma_max}
    \mathbf{e}_1^{\top} \mathbf{J} \mathbf{e}_1 = 1 \leq \sigma_{\max}(\mathbf{J}).
\end{equation}

\paragraph{Condition number of $\mathbf{J}$}

Note that the condition number $\kappa$ of a matrix is defined as the ratio of its maximum and minimum singular values, i.e.
\begin{equation*}
    \kappa(\mathbf{J}) = \frac{\sigma_{\max}(\mathbf{J})}{\sigma_{\min}(\mathbf{J})}.
\end{equation*}

However, because our bounds in \cref{eq:ub_sigma_max} and \cref{eq:lb_sigma_max} on $\sigma_{\max}(\mathbf{J})$ do not scale with the sequence length $T$, it follows that the scaling with $T$ of an upper bound on $\kappa(\mathbf{J})$---the conditioning of the optimization problem---is controlled solely by the bounds on $\sigma_{\min}(\mathbf{J})$ that we provided in Theorem \ref{theorem:PL-LLE}.
The importance of studying how the conditioning scales with $T$ stems from the fact that we would like to understand if there are regimes---particularly involving large sequence lengths and parallel computers---where parallel evaluation can be faster than sequential evaluation.

\subsection{A Generalized Proof that the Largest Lyapunov Exponent Controls the PL Constant}\label{section:LLE_PL}

\paragraph{Lower Singular Value Bound}
Recall the following sequence of observations. 
\[\lambda_{\min}(\b J \b J^\top) \ = \ \sigma^2_{\min}(\b J) \ = \ \frac{1}{\sigma^2_{\max}(\b J^{-1})} \ = \ \frac{1}{\|\b J^{-1} \|^2_2}\]
Thus, to lower bound the eigenvalues of $\b J \b J^{\top}$ as desired, we can \textit{upper bound} the spectral norm of $\b J^{-1}$. 

\paragraph{General Bound}
As discussed in the main text, the predictability of the nonlinear state space model is characterized by the products of its Jacobians along a trajectory. We will need to control how this product behaves. To reduce notational burden, we will drop the DEER iteration superscript $i$. In particular, we will assume that there exists a function 
\(g_J : \mathbb{N}_0 \to \mathbb{R}\) such that
\[
\bigl\|\,J_{k-1}J_{k-2}\cdots J_{i}\bigr\|_\xi \;\leq\ g_J(k-i)
\]
holds for all products \(J_{k-1}\cdots J_{i}\) with \(k > i\), where
\(\|\cdot\|_\xi\) is the matrix operator norm induced by the vector norm \(\|\cdot\|_\xi\). Intuitively, the function $g_J$ measures the stability of the nonlinear state space model. For example, suppose the model is contracting with rate $\rho < 1$. Then the product of Jacobians exponentially decreases, which we can write as
\[g_J(j) = a \, \rho^j,  \]
for some $a \geq 1$. The larger the value of $a$, the larger the potential ``overshoot", before exponential shrinkage begins. 

\begin{lemma}\label{lemma:bound_on_J_inverse}
Let \(\|\cdot\|_\xi\) be the matrix operator norm induced by the vector norm \(\|\cdot\|_\xi\). Suppose there is a function 
\(g_J : \mathbb{N}_0 \to \mathbb{R}\) such that
\[
\bigl\|\,J_{k-1}J_{k-2}\cdots J_{i}\bigr\|_\xi \;\leq\ g_J(k-i)
\]
holds for all products \(J_{k-1}\cdots J_{i}\) with \(k > i\).  Define
\[
G_J(T) \;=\; \sum_{0 \,\le\, j \,<\, T} g_J(j).
\]
Then
\[
\| \b J^{-1}\|_\xi \;\le\; G_J(T).
\]
\end{lemma}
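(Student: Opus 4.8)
The plan is to reuse the identity-minus-nilpotent decomposition from the proof of \Cref{theorem:PL-LLE}, but now carry the general induced operator norm $\|\cdot\|_\xi$ through every step instead of specializing to $\|\cdot\|_2$. Since $\mathbf{J} = \mathbf{I}_{TD} - \mathbf{N}$ with $\mathbf{N}^T = \mathbf{0}_{TD}$, the finite Neumann series gives $\mathbf{J}^{-1} = \sum_{k=0}^{T-1}\mathbf{N}^k$ exactly as before, and the triangle inequality for the induced norm immediately yields $\|\mathbf{J}^{-1}\|_\xi \le \sum_{k=0}^{T-1}\|\mathbf{N}^k\|_\xi$. The whole lemma then collapses to a single per-power estimate, $\|\mathbf{N}^k\|_\xi \le g_J(k)$, after which summing over $0 \le k < T$ reproduces $G_J(T)$ by definition.

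The heart of the argument is therefore the bound on $\|\mathbf{N}^k\|_\xi$, and here I would invoke the block structure recorded in \eqref{eq:N_definition} and the worked $T=4$ example. The matrix $\mathbf{N}^k$ is supported entirely on the $k$-th block sub-diagonal, and its nonzero blocks are precisely the length-$k$ ordered products $J_t J_{t-1}\cdots J_{t-k+1}$ of consecutive dynamics Jacobians. Writing $\mathbf{x}=(x_1,\dots,x_T)$ in block form, this says $(\mathbf{N}^k\mathbf{x})_t = (J_t\cdots J_{t-k+1})\,x_{t-k}$ for $t>k$ and $(\mathbf{N}^k\mathbf{x})_t = 0$ otherwise. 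The hypothesis bounds each such block in the $\xi$-operator norm by $g_J(k)$, and, crucially, the index map $t \mapsto t-k$ is injective, so each input block is routed to a \emph{distinct} output block with no summing across blocks; $\mathbf{N}^k$ acts as a block shift composed with these block multiplications. This is what should let $\|\mathbf{N}^k\|_\xi$ be controlled by the largest block norm $\max_t \|J_t\cdots J_{t-k+1}\|_\xi \le g_J(k)$.

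The step I expect to be the main obstacle is making ``controlled by the largest block norm'' rigorous for a general induced norm rather than only for $\|\cdot\|_2$, since for a genuinely arbitrary norm the operator norm of a block generalized-permutation matrix need not reduce to the maximum of its block norms. The clean reduction requires the ambient norm on $\mathbb{R}^{TD}=(\mathbb{R}^D)^T$ to be compatible with the partition into $T$ blocks of size $D$, e.g.\ a mixed norm $\|\mathbf{x}\|_\xi = \bigl\|(\|x_1\|_\xi,\dots,\|x_T\|_\xi)\bigr\|_p$ built from a base norm on $\mathbb{R}^D$, which includes the Euclidean case. Under such compatibility the per-block submultiplicativity $\|(\mathbf{N}^k\mathbf{x})_t\|_\xi \le g_J(k)\,\|x_{t-k}\|_\xi$ passes through the monotone outer $\ell_p$ norm, and the injectivity of the shift gives $\|\mathbf{N}^k\mathbf{x}\|_\xi \le g_J(k)\,\|\mathbf{x}\|_\xi$. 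I would therefore state this block-compatibility assumption on $\|\cdot\|_\xi$ explicitly, verify the per-block inequality above, and then assemble the three ingredients---Neumann series, triangle inequality, and the block estimate---into the final bound $\|\mathbf{J}^{-1}\|_\xi \le \sum_{k=0}^{T-1} g_J(k) = G_J(T)$.
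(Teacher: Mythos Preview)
Your approach is correct and, interestingly, is \emph{not} the route the paper takes for this lemma. The paper proves it by direct backward substitution: writing $\mathbf{y}=\mathbf{J}^{-1}\mathbf{x}$ blockwise as $y_k=\sum_{i\le k}(J_{k-1}\cdots J_i)\,x_i$, bounding $\|y_k\|_\xi$ termwise, reindexing the double sum $\sum_k\sum_{i\le k}g_J(k-i)\|x_i\|_\xi=\sum_i\|x_i\|_\xi\,G_J(T-i+1)$, and then arguing that the worst case concentrates $\mathbf{x}$ in its first block. Your Neumann-series argument $\|\mathbf{J}^{-1}\|_\xi\le\sum_{k=0}^{T-1}\|\mathbf{N}^k\|_\xi\le\sum_k g_J(k)$ is exactly the mechanism the paper uses for the \emph{spectral-norm} proof of \Cref{theorem:PL-LLE} but not for this more general lemma; you have essentially transplanted that cleaner argument and carried a general $\|\cdot\|_\xi$ through it. The tradeoff: your route isolates the only nontrivial step as the per-power block estimate $\|\mathbf{N}^k\|_\xi\le g_J(k)$, which is conceptually tidy, whereas the paper's substitution argument never needs to bound $\|\mathbf{N}^k\|_\xi$ as a standalone object. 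You are also more honest than the paper about the block-compatibility requirement on $\|\cdot\|_\xi$---the paper's proof silently uses both $\|\mathbf{y}\|_\xi\le\sum_k\|y_k\|_\xi$ and a worst-case-in-first-block argument, which are just as much block-structure assumptions as the mixed-norm hypothesis you spell out.
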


\begin{proof}
Let \(\b y = \b J^{-1} \b x\).  
By backward substitution for the blockwise entries of \(\b J^{-1} \b x\), we have
\[
y_k \;=\; \sum_{i\in[k]} \bigl(J_{k-1} J_{k-2} \cdots J_i\bigr)\,x_i.
\]
Omitting the subscript \(\xi\) in the norms for brevity and applying the triangle inequality and the induced-norm property, 
\[
\|\b y\|
\;\le\; \sum_{k\in[T]} \|y_k\|
\;\le\; \sum_{k\in[T]} \sum_{i\in[k]} \|J_{k-1} \cdots J_i\|\;\|x_i\|.
\]
By assumption, \(\|J_{k-1} \cdots J_i\| \le g_J(k-i)\).  Hence,
\[
\|\b y\|
\;\le\; \sum_{k\in[T]}\,\sum_{i\in[k]} g_J(k-i)\,\|x_i\|
\;=\; \sum_{i\in[T]} \|x_i\| \sum_{k=i}^T g_J(k-i)
\;=\; \sum_{i\in[T]} \|x_i\|\;G_J\bigl(T - i + 1\bigr).
\]
Since \(G_J(t)\) is nondecreasing in \(t\), the largest multiplier in these sums is \(G_J(T)\).  In the worst case, \(\|\b x\| = \|x_1\|\). Thus,
\[
\|\b J^{-1}\|
\;\le\; \dfrac{\| \b J^{-1} \b x \| }{\|\b x\|} \; = \; \frac{\|\b y\|}{\|\b x\|}
\;\le\; G_J(T).
\]
This completes the proof.
\end{proof}

\begin{remark}[Contraction in The Identity Metric]\label{rem:contr_id}
Recall that a system is contracting in the identity metric when the system Jacobians have singular values less than one:
\[\forall i, \quad \|J_i\|\le \rho \quad \iff \quad  J^\top_i J_i \ \leq \ \rho^2 I \]
In this case, we can take 
\[g_J(j) = \rho^j. \]
Then, by Lemma \ref{lemma:bound_on_J_inverse},
\begin{equation}\label{eq:contr_id}
\|\b J^{-1}\|
\;\le\; \sum_{j=0}^{T-1} \rho^j
\;=\;
\begin{cases}
\dfrac{\rho^T - 1}{\rho - 1}, & \rho \neq 1,\\
T, & \rho = 1,
\end{cases}
\end{equation}
where in the case \(\rho=1\), there are \(T\) summands and each term equals 1.
\end{remark}

\begin{remark}[Contraction in Time-Varying, State-Dependent Metrics]
Recall that a system is contracting in metric $M_i = M(s_i,i)$ if the following linear matrix inequality is satisfied
\[\forall i \in [T-1], \quad 
  J_i^\top \, M_{i+1} \, J_i
  \;\preceq\;
  e^{2 \lambda} \, M_i.
\]
Equivalently, this condition can be written as a norm constraint
\[ \forall i \in [T-1], \quad \|M_{i+1}^{1/2} J_i M_i^{-1/2}\| \;\le\; \rho  \]

Using these metrics, we define the block-diagonal, symmetric, positive-definite matrix
\[\b M = \mathrm{diag}(M_1, M_2, \ldots, M_T) \]
as well as the similarity transform of the residual function Jacobian, based on this matrix
\[
  \b J_{\b M} \coloneqq \b M^{1/2} \, \b J \, \b M^{-1/2}.
\]
Then the off-diagonal block entries of $\b J_{\b M}$ are
\[
  M_{i+1}^{1/2} \, J_i \, M_i^{-1/2}
  \quad\text{for}\quad
  i \in [T-1],
\]
while its diagonal block entries are the identity matrix.  
If the off-diagonal blocks of $J_M$ satisfy a product bound function $g_{\b J_{\b M} }(j)$ as in Lemma \ref{lemma:bound_on_J_inverse}, then $\b J_{\b M}$ has norm bounded by $G_{\b J_{\b M} }(T)$.  Hence,
\[
\begin{aligned}
  \|\b J^{-1}\|
    &= \bigl\|\b M^{-1/2} \; \b M^{1/2} \b J^{-1} \b M^{-1/2} \; \b M^{1/2}\bigr\| \\[6pt]
    &\le \|\b M^{-1/2}\|\;\bigl\|\b M^{1/2} \b J^{-1} \b M^{-1/2}\bigr\|\;\|\b M^{1/2}\| \\[6pt]
    &= \|\b M^{-1/2}\|\;\|\b J_{\b M}^{-1}\|\;\|\b M^{1/2}\| \\[6pt]
    &\le \|\b M^{-1/2}\|\;G_{\b J_{\b M}}(T)\;\|\b M^{1/2}\| \\[6pt]
    &= \kappa_M \, G_{\b J_{\b M}}(T),
\end{aligned}
\]
where 
\[
  \kappa_M \;\coloneqq\; \sqrt{\frac{\lambda_{\max}(\b M)}{\lambda_{\min}(\b M)}}.
\]

In this case, we may again take $g_{J_M}(j) = \rho^j$, and we obtain the bound
\[
  \|\b J^{-1}\|
  \;\le\;
  \kappa_M \sum_{0 \le j < T} \rho^j \ = \ \begin{cases}
\kappa_M \dfrac{\rho^T - 1}{\rho - 1}, & \rho \neq 1,\\
\kappa_M T, & \rho = 1,
\end{cases}.
\]
\end{remark}

\begin{remark}[Contraction After Burn-In]
Suppose that 
\[g_J(j) \leq a e^{-\lambda j} \]
where $a \geq 1$ and measures the degree of ``overshoot" the system can undergo before eventually converging, and $\lambda > 0$. In particular, assume for concreteness that 
\[||J_t|| \leq 1\]
Then, the product of two Jacobians can \textit{grow}, if $a > e^\lambda $, since
\[||J_{t+1} \, J_t|| \leq a e^{-\lambda}.\]
In general, the product of Jacobians can transiently grow (i.e., overshoot) for 
\[k_{\text{overshoot}} = \frac{1}{\lambda}\log a \]
time steps, at which point the product of $k > k_{\text{overshoot}}$ Jacobians will remain less than $1$, and will in fact decay to zero exponentially with rate $\lambda$. 

In this case, by Lemma \ref{lemma:bound_on_J_inverse}:
\[
\|\b J^{-1}\|
\;\le\;
a \sum_{j=0}^{T-1}e^{-\lambda j}
\;=\; a \ \dfrac{e^{-\lambda T} - 1}{e^{-\lambda} - 1} .
\] 
\end{remark}

\section{DEER Merit Function Inherits Lipschitzness of Dynamics Jacobians}\label{section:DEER_Lipschitz}

This section provides a proof of main text \Cref{theorem:Lipschitz}.

\begin{thm*}[\Cref{theorem:Lipschitz}]
If the dynamics of the underlying nonlinear state space model have $L$-Lipschitz Jacobians, i.e., 
\[
\forall \, t > 1, \ \ \ s, s' \in \mathbb{R}^D: \quad \|J_t(s) - J_t(s')\| \leq L \|s - s'\|,
\]
then the residual function Jacobian $\b J$ is also $L$-Lipschitz, with the same $L$. 
\end{thm*}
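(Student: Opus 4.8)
The plan is to exploit the very simple way in which $\mathbf{J}(\mathbf{s})$ depends on $\mathbf{s}$: from \eqref{eq:drds}, the diagonal blocks of $\mathbf{J}(\mathbf{s})$ are the constant matrix $I_D$, and the only blocks that vary with $\mathbf{s}$ are the first sub-diagonal blocks $-J_t(s_{t-1})$. Consequently the difference $\mathbf{J}(\mathbf{s}) - \mathbf{J}(\mathbf{s}')$ has all diagonal (and all structurally zero) blocks cancel, and equals $-(\mathbf{N}(\mathbf{s}) - \mathbf{N}(\mathbf{s}'))$ for the nilpotent matrix $\mathbf{N}$ of \eqref{eq:N_definition}. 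Writing $\Delta_t := J_t(s_{t-1}) - J_t(s_{t-1}')$ for $t = 2,\ldots,T$, this difference is the block matrix whose only nonzero entries are $-\Delta_t$ on the first lower block-diagonal. Since negation preserves the spectral norm, it suffices to bound $\|\mathbf{N}(\mathbf{s}) - \mathbf{N}(\mathbf{s}')\|_2$.

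First I would bound the spectral norm of this single-band matrix. For any $\mathbf{x} = \operatorname{vec}(x_1,\ldots,x_T)$, the map $\mathbf{N}(\mathbf{s}) - \mathbf{N}(\mathbf{s}')$ sends $\mathbf{x}$ to the vector whose $t$-th block is $\Delta_t x_{t-1}$ and whose first block is zero. Hence, using $\|\mathbf{y}\|_2^2 = \sum_t \|y_t\|^2$ and sub-multiplicativity of the operator norm,
\begin{equation*}
\| (\mathbf{N}(\mathbf{s}) - \mathbf{N}(\mathbf{s}')) \mathbf{x} \|_2^2 = \sum_{t=2}^{T} \| \Delta_t x_{t-1} \|^2 \leq \Big( \max_{t} \| \Delta_t \| \Big)^2 \sum_{t=2}^{T} \| x_{t-1} \|^2 \leq \Big( \max_{t} \| \Delta_t \| \Big)^2 \| \mathbf{x} \|_2^2 .
\end{equation*}
Taking the supremum over unit $\mathbf{x}$ gives $\| \mathbf{J}(\mathbf{s}) - \mathbf{J}(\mathbf{s}') \|_2 \leq \max_t \| \Delta_t \|$. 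This ``max of the blocks'' bound is the key step and the one place where the band structure of $\mathbf{J}$ is essential.

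Finally I would convert this into a Lipschitz bound in $\mathbf{s}$. By the hypothesis that each $J_t$ is $L$-Lipschitz, $\| \Delta_t \| = \| J_t(s_{t-1}) - J_t(s_{t-1}') \| \leq L \| s_{t-1} - s_{t-1}' \|$. Each block $s_{t-1} - s_{t-1}'$ is a sub-vector of $\mathbf{s} - \mathbf{s}'$, so $\| s_{t-1} - s_{t-1}' \| \leq \| \mathbf{s} - \mathbf{s}' \|_2$ for every $t$, whence $\max_t \| \Delta_t \| \leq L \| \mathbf{s} - \mathbf{s}' \|_2$. Combining the two displays yields $\| \mathbf{J}(\mathbf{s}) - \mathbf{J}(\mathbf{s}') \|_2 \leq L \| \mathbf{s} - \mathbf{s}' \|_2$, which is exactly the claim, with the same constant $L$.

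The only genuine content is the spectral-norm-of-a-single-band estimate; the rest is bookkeeping. The subtle point worth stating explicitly is why the constant does not degrade with $T$: although $\mathbf{J}(\mathbf{s}) - \mathbf{J}(\mathbf{s}')$ stacks $T-1$ perturbed blocks, each input block $x_{t-1}$ feeds exactly one output block, so the perturbations act on essentially orthogonal subspaces rather than accumulating. This is precisely what holds the spectral norm at $\max_t \|\Delta_t\|$ instead of producing a sum over $t$, and is the reason the Lipschitz constant is inherited verbatim.
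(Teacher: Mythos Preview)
Your proof is correct and follows essentially the same approach as the paper: both observe that $\mathbf{J}(\mathbf{s}) - \mathbf{J}(\mathbf{s}')$ is a single-subdiagonal block matrix whose spectral norm equals $\max_t \|\Delta_t\|$, then bound each $\|\Delta_t\|$ via the Lipschitz hypothesis and $\|s_{t-1} - s_{t-1}'\| \leq \|\mathbf{s} - \mathbf{s}'\|_2$. If anything, you are slightly more explicit than the paper, which simply asserts $\|\mathbf{D}\|_2 = \max_t \|D_t\|_2$ without the vector computation you carry out.
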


\begin{proof}
By assumption, for each $t$, 
\[ \forall s,s' \in \mathbb{R}^D: \quad 
\|J_t(s_t) - J_t(s_t')\|_2 \;\le\; L\,\|s_t - s_t'\|_2.
\]
Define $D_t := J_t(s_t') - J_t(s_t)$ and
\[
\mathbf{D} \;:=\; \mathbf{J}(\mathbf{s}') - \mathbf{J}(\mathbf{s}).
\]
Since $\mathbf{D}$ places the blocks $D_t$ along one subdiagonal, we have
\[
\|\mathbf{D}\|_2 \;=\; \max_{t}\,\|D_t\|_2.
\]
But each block $D_t$ satisfies the Lipschitz bound
\[
\|D_t\|_2 \;\le\; L\,\|s_t' - s_t\|_2,
\]
so
\[
\|\mathbf{D}\|_2 
\;=\; 
\max_{t}\|D_t\|_2
\;\le\;
L \,\max_{t}\|s_t' - s_t\|_2
\;\le\;
L\,\|\mathbf{s}' - \mathbf{s}\|_2.
\]
Hence, it follows that
\[
\|\mathbf{J}(\mathbf{s}') - \mathbf{J}(\mathbf{s})\|_2 
\;=\; 
\| \b D \|_2
\;\le\;
L\,\|\mathbf{s}' - \mathbf{s}\|_2.
\]
Thus $\mathbf{J}$ is $L$-Lipschitz.
\end{proof}

\section{DEER Always Converges Linearly}\label{sec:deer_always_converges}

This section provides a proof of \Cref{theorem:deer_converges_globally}.

While proofs of global convergence are challenging in general for GN, DEER is highly structured, and this can be exploited to provide a global proof of convergence. In particular, we will exploit the \textit{hierarchical} nature of DEER, which is reflected in the fact that $\b J$ and $\b J^{-1}$ are lower block-triangular.  
\begin{thm*}[\Cref{theorem:deer_converges_globally}]
Let the DEER (Gauss--Newton) updates be given by~\cref{eq:GN_step}, and let $\mathbf{s}^{(i)}$ denote the $i$-th iterate. Let ${\mathbf{e}^{(i)} :=  \mathbf{s}^{(i)} - \mathbf{s}^* }$ denote the error at iteration $i$, and assume the regularity condition in \cref{eq:LLE_regularity}. Then the error converges to zero at a linear rate:
\[
\| \mathbf{e}^{(i)} \|_2 \leq \osw \, \beta^i \| \mathbf{e}^{(0)} \|_2,
\]
for some constant $\osw \geq 1$ independent of $i$, and a convergence rate $0 < \beta < 1$.
\end{thm*}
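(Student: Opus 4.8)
The plan is to track the error vector $\mathbf{e}^{(i)} = \mathbf{s}^{(i)} - \mathbf{s}^*$ directly through the Gauss--Newton map and exploit the block-triangular structure stressed in the theorem's lead-in. First I would derive an \emph{exact} linear recursion for the error. Since $\mathbf{r}(\mathbf{s}^*) = \mathbf{0}$, the fundamental theorem of calculus gives $\mathbf{r}(\mathbf{s}^{(i)}) = \bar{\mathbf{J}}^{(i)}\mathbf{e}^{(i)}$ with the averaged Jacobian $\bar{\mathbf{J}}^{(i)} := \int_0^1 \mathbf{J}(\mathbf{s}^* + \tau\mathbf{e}^{(i)})\,\mathrm{d}\tau$. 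Substituting into \cref{eq:GN_step}, and using that $\mathbf{J}(\mathbf{s}^{(i)})$ is always invertible (it is $\mathbf{I} - \mathbf{N}$ with $\mathbf{N}$ nilpotent), I obtain $\mathbf{e}^{(i+1)} = \mathbf{K}^{(i)}\mathbf{e}^{(i)}$, where $\mathbf{K}^{(i)} := \mathbf{J}(\mathbf{s}^{(i)})^{-1}\bigl(\mathbf{J}(\mathbf{s}^{(i)}) - \bar{\mathbf{J}}^{(i)}\bigr)$.

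The crucial structural observation is that $\mathbf{K}^{(i)}$ is strictly lower block triangular. Writing both $\mathbf{J}(\mathbf{s}^{(i)})$ and $\bar{\mathbf{J}}^{(i)}$ in the form $\mathbf{I} - (\text{first subdiagonal})$, their difference $\mathbf{M}^{(i)} := \mathbf{J}(\mathbf{s}^{(i)}) - \bar{\mathbf{J}}^{(i)}$ has its identity parts cancel and is strictly lower block bidiagonal; multiplying by the unit lower-triangular matrix $\mathbf{J}(\mathbf{s}^{(i)})^{-1}$ keeps the product strictly lower block triangular. Every such matrix has the ``shift-down'' property: it sends a vector whose first $m$ blocks vanish to one whose first $m+1$ blocks vanish. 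Iterating, the product of any $T$ consecutive factors annihilates every vector, so $\mathbf{e}^{(T)} = \mathbf{K}^{(T-1)}\cdots\mathbf{K}^{(0)}\mathbf{e}^{(0)} = \mathbf{0}$. Because $\mathbf{s}^*$ is a fixed point of the Gauss--Newton map, this yields \emph{exact finite termination}, $\mathbf{e}^{(i)} = \mathbf{0}$ for all $i \ge T$ --- the hierarchical engine behind the global guarantee.

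It then remains to convert finite termination into the stated exponential envelope by controlling the transient over the first $T-1$ steps. I would bound $\|\mathbf{K}^{(i)}\|_2 \le \|\mathbf{J}(\mathbf{s}^{(i)})^{-1}\|_2\,\|\mathbf{M}^{(i)}\|_2$, using \cref{eq:final_upper_bound} (equivalently \Cref{lemma:bound_on_J_inverse}) to control $\|\mathbf{J}(\mathbf{s}^{(i)})^{-1}\|_2 \le G_J(T)$, and bounding $\|\mathbf{M}^{(i)}\|_2$ by the largest norm of its subdiagonal blocks --- each a difference of dynamics Jacobians, hence at most $2\rho$ under the bounded-Jacobian assumption \eqref{eq:bounded_spectral_norm} (or controllable via \eqref{eq:LLE_regularity}). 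This gives a uniform bound $\|\mathbf{K}^{(i)}\|_2 \le B$, so $\|\mathbf{e}^{(i)}\|_2 \le B^i\|\mathbf{e}^{(0)}\|_2$ for $i < T$. Finally, fixing any $\beta \in (0,1)$ and setting $\osw := \max(1, B^{T-1})/\beta^{T-1} \ge 1$, the inequality $\beta^i \ge \beta^{T-1}$ for $i \le T-1$ yields $\|\mathbf{e}^{(i)}\|_2 \le \osw\,\beta^i\|\mathbf{e}^{(0)}\|_2$ for $i < T$, while the bound holds trivially for $i \ge T$ since the left side vanishes.

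I expect the main obstacle to be the transient constant $\osw$, not the termination itself. Nilpotency delivers $\mathbf{e}^{(T)} = \mathbf{0}$ essentially for free, but pinning down a \emph{uniform} bound $B$ on $\|\mathbf{K}^{(i)}\|_2$ requires global control of both $\|\mathbf{J}(\mathbf{s}^{(i)})^{-1}\|_2$ and the block norms of $\mathbf{M}^{(i)}$ along the whole optimization trajectory, which is where an assumption like \eqref{eq:bounded_spectral_norm} or \eqref{eq:LLE_regularity} enters. The resulting $\osw \sim B^{T-1}$ can grow exponentially in $T$ --- precisely the overshoot factor flagged in the discussion following the theorem --- and I would note that in the strongly contracting regime ($\rho < 1$, so $B < 1$) one may instead take $\osw$ close to $1$, recovering essentially overshoot-free convergence.
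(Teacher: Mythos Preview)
Your proof is correct but takes a genuinely different route from the paper's. Both arguments start from the same error recursion $\mathbf{e}^{(i+1)} = \mathbf{J}(\mathbf{s}^{(i)})^{-1}\bigl(\mathbf{J}(\mathbf{s}^{(i)}) - \mathbf{B}^{(i)}\bigr)\mathbf{e}^{(i)}$, but diverge from there. The paper introduces a geometrically weighted diagonal norm $\|\cdot\|_W$ with $\mathbf{W} = \mathrm{Diag}(I_D, w^2 I_D, \ldots)$ and chooses the weight $w = \beta/(2\rho a + \beta e^{\lambda})$ so that the error map is a $\beta$-contraction in that norm; the overshoot factor $\osw$ then arises as the condition number $\sqrt{\lambda_{\max}(\mathbf{W})/\lambda_{\min}(\mathbf{W})}$, yielding the explicit constant $\osw = \bigl((2\rho a + \beta e^{\lambda})/\beta\bigr)^{T}$. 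You instead exploit the strict lower block-triangularity of the error map directly to obtain \emph{exact finite termination} $\mathbf{e}^{(T)} = \mathbf{0}$ (essentially Proposition~1 of \citet{gonzalez2024towards}, which the paper cites), then bound the transient by $\|\mathbf{K}^{(i)}\|_2 \le B := 2\rho\,G_J(T)$ and repackage $B^i$ into a geometric envelope by choosing $\osw$ accordingly. Your argument is more elementary and makes the algebraic mechanism behind global convergence transparent; the paper's weighted-norm approach is less direct but ties $\beta$ and $\osw$ explicitly to the predictability parameters $(\rho, a, \lambda)$ and flows naturally into the corollary on strongly contracting systems (where one may take $w=1$ and hence $\osw = 1$). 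Your observation that $B < 1$ when $\rho < 1/3$ recovers the same threshold the paper obtains in that corollary.
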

\begin{proof}
Our general strategy for deriving DEER convergence bounds will be to fix some weighted norm $\| \cdot \|_W \coloneq \| \b W^{1/2} \cdot \b W^{-1/2}\|_2$ such that each DEER step is a contraction, with contraction factor $\beta \in [0,1)$. This will imply that the DEER error iterates decay to zero with linear rate, as  
\[\|\mathbf{e}^{(i)} \|_W \leq \beta^i \| \mathbf{e}^{(0)} \|_W. \]
To convert this bound back to standard Euclidean space, we incur an additional multiplicative factor that depends on the conditioning of $\b W$:
\begin{equation}\label{eq:error_bound_condition_number}
\| \mathbf{e}^{(i)} \|_2 \leq \osw \, \beta^i \| \mathbf{e}^{(0)} \|_2, \qquad \text{where} \qquad \osw \coloneq \sqrt{\frac{\lambda_{\max}(\b W)}{\lambda_{\min}(\b W)}}.    
\end{equation}

\paragraph{DEER as a Contraction Mapping}

Recall that the DEER (Gauss-Newton) updates are given by 
\[\mathbf{s}^{(i+1)} = \mathbf{s}^{(i)} - \mathbf{J}^{-1}(\mathbf{s}^{(i)}) \mathbf{r}(\mathbf{s}^{(i)})\]
Recalling that $\b r(\b s^*) = \b 0$ and subtracting the fixed point $\mathbf{s}^{*}$ from both sides, we have that 
\[\mathbf{e}^{(i+1)} \ = \ \mathbf{e}^{(i)} - \mathbf{J}^{-1}(\mathbf{s}^{(i)}) \mathbf{r}^{(i)} + \mathbf{J}^{-1}(\mathbf{s}^{(i)}) \, \b r(\b s^*) \ = \ \mathbf{e}^{(i)} - \mathbf{J}^{-1}(\mathbf{s}^{(i)}) \, \bigg(\b r( \mathbf{s}^{(i)}) - \b r(\b s^*) \bigg). \]
This equation can be written using the mean value theorem as 
\[\mathbf{e}^{(i+1)} = \bigg( \b I - \mathbf{J}^{-1}(\mathbf{s}^{(i)}) \mathbf{B}^{(i)} \bigg) \mathbf{e}^{(i)}  \qquad \text{where} \qquad \mathbf{B}^{(i)}  \coloneq \int_0^1 \mathbf{J}(\mathbf{s}^* + \tau \mathbf{e}^{(i)}) \, d\tau\]
From this, we can conclude that the DEER iterates will converge (i.e., the error shrinks to zero) if 
\begin{equation}\label{eq:DEER_contraction_1}
\| \mathbf{I} -  \mathbf{J}^{-1}(\mathbf{s}^{(i)}) \mathbf{B}^{(i)} \|_W \ = \ \| \mathbf{J}^{-1}(\mathbf{s}^{(i)}) \left(\mathbf{J}(\mathbf{s}^{(i)}) - \mathbf{B}^{(i)} \right) \|_W \leq \beta <  1.   
\end{equation}

\paragraph{Constructing the Weighted Norm}
We will choose a diagonal weighted norm, given by 
\begin{equation}
\mathbf W \;\coloneqq\; \operatorname{Diag} \bigl(I_D,\;w^{2}I_D,\;\dots,\;w^{2T}I_D\bigr)
      \;\in\;\mathbb R^{TD\times TD},
      \qquad w>0 .
\label{eq:W}
\end{equation}
Under the norm induced by \eqref{eq:W} we have
\begin{align}
\|\mathbf J(\mathbf{s}^{(i)})-\mathbf B^{(i)}\|_{W} &\;\le\; 2w\rho ,               && \label{eq:JminusB}\\[4pt]
\|\mathbf J^{-1}(\mathbf{s}^{(i)})\|_{W} &\;\le\; a\frac{1-(we^{\lambda})^{T}}{1-we^{\lambda}}, && \label{eq:Jinv}
\end{align}
where $\rho$ upper bounds $\| J \|_2$ over all states in the DEER optimization trajectory.

% Combined bound
Multiplying \eqref{eq:JminusB} and \eqref{eq:Jinv} yields
\begin{equation}
\|\mathbf J^{-1}(\mathbf{s}^{(i)})\|_{W}\,\|\mathbf J(\mathbf{s}^{(i)})-\mathbf B^{(i)}\|_{W}
      \;\le\;
      2a w\rho\,
      \frac{1-(we^{\lambda})^{T}}{1-we^{\lambda}} .
\label{eq:ProductBound}
\end{equation}

% Choice of w
To ensure the right‑hand side of \eqref{eq:ProductBound} does not exceed a prescribed
$\beta\in[0,1)$, choose
\begin{equation}
w \;=\; \frac{\beta}{2\rho a+\beta e^{\lambda}} .
\label{eq:wChoice}
\end{equation}

% Consistency checks
With this choice, 
\begin{equation}
we^{\lambda}<1,
\qquad\text{and}\qquad
\dfrac{2a w\rho}{1-we^{\lambda}} \;=\; \beta,
\label{eq:Consistency}
\end{equation}
so the geometric series in \eqref{eq:Jinv} is convergent and the bound in
\eqref{eq:ProductBound} holds for all $T$, because
\[ \|\mathbf J^{-1}(\mathbf{s}^{(i)})\|_{W}\,\|\mathbf J(\mathbf{s}^{(i)})-\mathbf B^{(i)}\|_{W}
      \;\le\;
      2a w\rho\,
      \frac{1-(we^{\lambda})^{T}}{1-we^{\lambda}}  \ = \ \beta \left(1- (w e^{\lambda })^T\right) \ \leq \ \beta.\]

This shows that we can always pick a weighted norm so that DEER converges with linear rate \textit{in that norm}. Converting back into the standard Euclidean norm using \eqref{eq:error_bound_condition_number} and substituting in the condition number of $\b W^{1/2}$ one finds that
\begin{equation}\label{eq:general_DEER_convergence}
 \| \mathbf{e}^{(i)} \|_2 \leq \ \left(\frac{2\rho a+\beta e^{\lambda}}{\beta }\right)^T  \, \beta^i \, \| \mathbf{e}^{(0)} \|_2.    
\end{equation}
Thus, the DEER error converges with linear rate towards zero.
\end{proof}
\begin{remark}
The multiplicative overshoot factor arising from the conditioning of $\b W$ grows exponentially in the sequence length $T$, leading potentially to long convergence times. Indeed, a quick calculation shows that the number of steps needed to bring the DEER error to $\epsilon$ is upper bounded as $O(T)$ because of this multiplicative constant.     
\end{remark}
\begin{remark}
One can ask under what conditions choosing $w = 1$ in \eqref{eq:wChoice} is possible, which eliminates the overshoot. We will address this in more detail in the next section. To provide a simple result here, we can assume that the system is contracting at every time step so that 
\[\rho = e^{\lambda}, \]
and $a = 1$. Then we have that 
\[ 1 = \frac{\beta}{2\rho a+\beta e^{\lambda}} = \frac{\beta}{2e^{\lambda}+\beta e^{\lambda}}.\]
Solving for $\lambda$, we have that if 
\[\lambda \leq \log \left( \dfrac{\beta}{2 + \beta}\right) < -\log(3), \]
then $w$ can be chosen to be equal to one, meaning the DEER converges globally with rate $\beta$ and no overshoot. 
\end{remark}

\section{DEER Converges Globally with Small Overshoot 
 for Sufficiently Strongly Contracting Systems}\label{sec:DEER_Global_Convergence_Contraction}
In this section we show that DEER converges globally to the optimum $\b s^*$ when the nonlinear state space model \eqref{eq:nonlinear_seq_model} is sufficiently strongly contracting. To do so, we first briefly recall the assumptions of Lemma \ref{lemma:bound_on_J_inverse}.
Let \(\|\cdot\|_\xi\) be the matrix operator norm induced by the vector norm \(\|\cdot\|_\xi\). Suppose there is a function 
\(g_J : \mathbb{N}_0 \to \mathbb{R}\) such that
\[
\bigl\|\,J_{k-1}J_{k-2}\cdots J_{i}\bigr\|_\xi \;\leq\ g_J(k-i)
\]
holds for all products \(J_{k-1}\cdots J_{i}\) with \(k > i\).  Define
\[
G_J(T) \;=\; \sum_{0 \,\le\, j \,<\, T} g_J(j).
\]
Then
\[
\| \b J^{-1}\|_\xi \;\le\; G_J(T).
\]
For example, if there is no structure which can be exploited in the products of Jacobians $J_t$, we may consider the ``one-step" growth/decay factor
\[\forall t , \quad \| J_t \| \leq e^{\lambda}, \]
which yields
\[g_J(j) =  e^{\lambda j } \qquad \implies \qquad G_J(T) \;=\; \sum_{0 \,\le\, j \,<\, T} g_J(j) = \dfrac{1-e^{\lambda T}}{1-e^{\lambda}}.\]

\begin{thm*}
DEER exhibits linear, global convergence to the optimum $\b s^*$ with rate $\beta \in [0,1)$ in the matrix operator norm \(\|\cdot\|_\xi\) if
\[2 g_J(1) G_J(T) \leq \beta \]
\end{thm*}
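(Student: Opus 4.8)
The plan is to show that, under the hypothesis $2g_J(1)\,G_J(T)\le\beta$, each DEER step contracts the error by a factor $\beta$ in the norm $\|\cdot\|_\xi$, from which global linear convergence is immediate by iteration. First I would reuse the exact error recursion derived in the proof of \Cref{theorem:deer_converges_globally}: applying the mean value theorem to $\mathbf{r}(\mathbf{s}^{(i)})-\mathbf{r}(\mathbf{s}^*)$ together with $\mathbf{r}(\mathbf{s}^*)=\mathbf{0}$ gives $\mathbf{e}^{(i+1)}=\mathbf{J}^{-1}(\mathbf{s}^{(i)})\bigl(\mathbf{J}(\mathbf{s}^{(i)})-\mathbf{B}^{(i)}\bigr)\mathbf{e}^{(i)}$, where $\mathbf{B}^{(i)}:=\int_0^1 \mathbf{J}(\mathbf{s}^*+\tau\mathbf{e}^{(i)})\,d\tau$. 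Hence it suffices to bound the single operator norm $\|\mathbf{J}^{-1}(\mathbf{s}^{(i)})\bigl(\mathbf{J}(\mathbf{s}^{(i)})-\mathbf{B}^{(i)}\bigr)\|_\xi$ by $\beta$ and then unroll the resulting inequality $\|\mathbf{e}^{(i+1)}\|_\xi\le\beta\,\|\mathbf{e}^{(i)}\|_\xi$.

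Next I would control the two factors separately and combine them by submultiplicativity. The inverse factor is handled directly by \Cref{lemma:bound_on_J_inverse}, which yields $\|\mathbf{J}^{-1}(\mathbf{s}^{(i)})\|_\xi\le G_J(T)$ under the product-of-Jacobians bound $g_J$. For the difference factor, I would observe that both $\mathbf{J}(\mathbf{s}^{(i)})$ and $\mathbf{B}^{(i)}$ equal $\mathbf{I}$ minus a matrix supported on the first block subdiagonal: the subdiagonal blocks of $\mathbf{J}(\mathbf{s}^{(i)})$ are the Jacobians $J_t(s^{(i)}_{t-1})$, while those of $\mathbf{B}^{(i)}$ are the $\tau$-averaged Jacobians $\int_0^1 J_t(s^*_{t-1}+\tau e^{(i)}_{t-1})\,d\tau$. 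Taking $k-i=1$ in the product bound gives $\|J_t\|_\xi\le g_J(1)$, and convexity of the norm gives the same bound for the averaged blocks; hence each subdiagonal block of $\mathbf{J}(\mathbf{s}^{(i)})-\mathbf{B}^{(i)}$ has $\xi$-norm at most $2g_J(1)$ by the triangle inequality. Because the difference is supported on a single subdiagonal, its operator norm equals the maximum of its block norms, giving $\|\mathbf{J}(\mathbf{s}^{(i)})-\mathbf{B}^{(i)}\|_\xi\le 2g_J(1)$.

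Putting the pieces together, submultiplicativity yields $\|\mathbf{J}^{-1}(\mathbf{s}^{(i)})(\mathbf{J}(\mathbf{s}^{(i)})-\mathbf{B}^{(i)})\|_\xi\le 2g_J(1)\,G_J(T)\le\beta<1$, which is exactly the stated hypothesis. Therefore $\|\mathbf{e}^{(i+1)}\|_\xi\le\beta\,\|\mathbf{e}^{(i)}\|_\xi$ for every $i$, and unrolling gives $\|\mathbf{e}^{(i)}\|_\xi\le\beta^i\,\|\mathbf{e}^{(0)}\|_\xi\to 0$. The convergence is \emph{global} because the per-step estimate uses only the bounds on $g_J$ and never assumes $\mathbf{s}^{(i)}$ is close to $\mathbf{s}^*$; and since it is measured in $\|\cdot\|_\xi$ directly, there is no condition-number conversion factor of the kind that produced the overshoot $\osw$ in \Cref{theorem:deer_converges_globally}.

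The main obstacle I anticipate is justifying the block-subdiagonal identity $\|\mathbf{J}(\mathbf{s}^{(i)})-\mathbf{B}^{(i)}\|_\xi\le 2g_J(1)$ for a general induced norm $\|\cdot\|_\xi$. For the spectral norm this is transparent, since a matrix $\mathbf{C}$ with a single nonzero block subdiagonal $\{D_t\}$ has $\mathbf{C}\mathbf{C}^\top$ block-diagonal and therefore $\|\mathbf{C}\|_2=\max_t\|D_t\|_2$; for an arbitrary $\xi$-norm one must argue the analogous bound consistently with the block conventions already used in \Cref{lemma:bound_on_J_inverse}. A secondary point to pin down is that the single-Jacobian bound $\|J_t\|_\xi\le g_J(1)$ must hold not only at the iterates but also at the interpolation points $\mathbf{s}^*+\tau\mathbf{e}^{(i)}$ appearing inside $\mathbf{B}^{(i)}$, which is precisely why the result is stated under a global (everywhere-contracting) hypothesis on the dynamics.
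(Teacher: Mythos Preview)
Your proposal is correct and follows essentially the same route as the paper: derive the error recursion $\mathbf{e}^{(i+1)}=\mathbf{J}^{-1}(\mathbf{s}^{(i)})\bigl(\mathbf{J}(\mathbf{s}^{(i)})-\mathbf{B}^{(i)}\bigr)\mathbf{e}^{(i)}$ via the mean value theorem, bound $\|\mathbf{J}^{-1}\|_\xi\le G_J(T)$ by \Cref{lemma:bound_on_J_inverse} and $\|\mathbf{J}-\mathbf{B}\|_\xi\le 2g_J(1)$ from the single-subdiagonal structure, then combine by submultiplicativity. Your two flagged caveats (the block-subdiagonal norm identity for general $\xi$, and the need for the single-Jacobian bound to hold along the interpolation path) are exactly the points the paper leaves implicit, so you are in fact slightly more careful than the original.
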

\begin{proof}
Recall that the DEER (Gauss-Newton) updates are given by 
\[\mathbf{s}^{(i+1)} = \mathbf{s}^{(i)} - \mathbf{J}^{-1}(\mathbf{s}^{(i}) \mathbf{r}(\mathbf{s}^{(i)})\]
Define the error at DEER iteration $(i)$ as $\mathbf{e}^{(i)} = \mathbf{s}^{(i)} - \mathbf{s}^{*}$. Recalling that $\b r(\b s^*) = \b 0$ and subtracting the fixed point $\mathbf{s}^{*}$ from both sides, we have that 
\[\mathbf{e}^{(i+1)} \ = \ \mathbf{e}^{(i)} - \mathbf{J}^{-1}(\mathbf{s}^{(i)}) \mathbf{r}^{(i)} + \mathbf{J}^{-1}(\mathbf{s}^{(i)}) \, \b r(\b s^*) \ = \ \mathbf{e}^{(i)} - \mathbf{J}^{-1}(\mathbf{s}^{(i)}) \, \bigg(\b r( \mathbf{s}^{(i)}) - \b r(\b s^*) \bigg). \]
This equation can be written in terms of the mean value theorem as 
\[\mathbf{e}^{(i+1)} = \bigg( \b I - \mathbf{J}^{-1}(\mathbf{s}^{(i)}) \mathbf{B}^{(i)} \bigg) \mathbf{e}^{(i)}  \qquad \text{where} \qquad \mathbf{B}^{(i)}  \coloneq \int_0^1 \mathbf{J}(\mathbf{s}^* + \tau \mathbf{e}^{(i)}) \, d\tau\]
This follows from the identity:
\[
\mathbf{r}(\mathbf{s}^{(i)}) - \mathbf{r}(\mathbf{s}^*) \coloneqq \int_0^1 \mathbf{J}(\tau \mathbf{s}^{(i)} + (1 - \tau) \mathbf{s}^*) \, d\tau \ (\b s - \b s^*) = \left( \int_0^1 \mathbf{J}(\mathbf{s}^* + \tau \mathbf{e}^{(i)})) \, d\tau \right) \mathbf{e}^{(i)}
\]
This identity can be proven by starting from the fundamental theorem of calculus, by letting
\[
\mathbf{s}(\tau) = \mathbf{s}^* + \tau \mathbf{e}^{(i)} = \mathbf{s}^* + \tau (\mathbf{s}^{(i)} - \mathbf{s}^*), \quad \tau \in [0, 1],
\]
which defines a straight-line path from \(\mathbf{s}^*\) to \(\mathbf{s}^{(i)}\). The fundamental theorem of calculus then says that 
\[
\mathbf{r}(\mathbf{s}^{(i)}) - \mathbf{r}(\mathbf{s}^*) = \int_0^1 \frac{d}{d\tau} \mathbf{r}(\mathbf{s}(\tau)) \, d\tau.
\]
Applying the chain rule inside the integral gives the result, because
\[
\frac{d}{d\tau} \mathbf{r}(\mathbf{s}(\tau)) = \mathbf{J}(\mathbf{s}(\tau)) \cdot \frac{d}{d\tau} \mathbf{s}(\tau)
= \mathbf{J}(\mathbf{s}^* + \tau \mathbf{e}^{(i)}) \cdot \mathbf{e}^{(i)}.
\]

From this, we can conclude that the DEER iterates will converge (i.e., the error shrinks to zero) if 
\begin{equation}\label{eq:DEER_contraction}
\| \mathbf{I} -  \mathbf{J}^{-1}(\mathbf{s}^{(i)}) \mathbf{B}^{(i)} \|_\xi \ = \ \| \mathbf{J}^{-1}(\mathbf{s}^{(i)}) \left(\mathbf{J}(\mathbf{s}^{(i)}) - \mathbf{B}^{(i)} \right) \|_\xi \leq \beta <  1.    
\end{equation}
By Lemma \ref{lemma:bound_on_J_inverse} we have that 
\[||\b e^{(i+1)}||_\xi \leq \| \b J^{-1}(\mathbf{s}^{(i)})\|_\xi \, \| \mathbf{J}(\mathbf{s}^{(i)}) - \mathbf{B}^{(i)} \|_\xi \|\b e^{(i)}\|_\xi \leq 2 \, g_J(1) G_J(T)\,  \|\b e^{(i)} \|. \] 
Thus, if there exists some $\beta \in [0,1)$ such that 
\[2 \, g_J(1) G_J(T) \leq \beta, \]
then the DEER error converges globally to zero in the weighted norm:
\[\| \b e^{(i)}||_\xi \leq \beta^i ||\b e^{(0)}||_\xi. \]
\end{proof}

\begin{cor}
Suppose the state space model is contracting in constant metric $M$, i.e.,  
\[ \| M^{1/2} J_t M^{-1/2} \| = \| J_t \|_M \leq e^{\lambda} < 1. \]
If $e^{\lambda} $ is sufficiently small, in particular if
\[ e^{\lambda}  \ \leq \ \frac{\beta}{2 + \beta} \ < \ \frac{1}{3},\]
then the DEER errors converge to zero with rate $\beta$. 
\end{cor}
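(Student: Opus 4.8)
The plan is to apply the theorem immediately preceding the corollary, specialized to the constant-metric case by taking the underlying vector norm $\|\cdot\|_\xi$ to be the $M$-weighted norm $\|x\|_M := \|M^{1/2}x\|_2$. Once the correct product-bound function $g_J$ is identified, the rest is a short algebraic rearrangement, so I expect no genuine difficulty beyond careful bookkeeping of the uniform-in-$T$ bound.

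First I would use the hypothesis $\|J_t\|_M \le e^{\lambda}$ together with submultiplicativity of the induced operator norm to obtain the product bound
\[
\|J_{k-1}J_{k-2}\cdots J_i\|_M \;\le\; \prod_{\ell=i}^{k-1}\|J_\ell\|_M \;\le\; \bigl(e^{\lambda}\bigr)^{k-i}.
\]
Because the metric $M$ is \emph{constant} (independent of $t$ and of the state), this telescoping is exact and no conditioning factor $\kappa_M$ enters the norms of the products themselves. I would therefore take $g_J(j) = e^{\lambda j}$, so that $g_J(1) = e^{\lambda}$ and, by Lemma~\ref{lemma:bound_on_J_inverse},
\[
G_J(T) \;=\; \sum_{j=0}^{T-1} e^{\lambda j} \;=\; \frac{1-e^{\lambda T}}{1-e^{\lambda}} \;\le\; \frac{1}{1-e^{\lambda}},
\]
where the final inequality uses $e^{\lambda}<1$, so that $1-e^{\lambda T}\le 1$ uniformly in $T$. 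It is essential here that the contraction hypothesis hold \emph{globally}, i.e.\ $\|J_t(s)\|_M \le e^{\lambda}$ for every state $s$; this is exactly what licenses applying the product bound to all Jacobians appearing in $\mathbf{B}^{(i)} = \int_0^1 \mathbf{J}(\mathbf{s}^* + \tau\mathbf{e}^{(i)})\,d\tau$, not merely those near the optimum. This global requirement, and the fact that contraction rather than mere boundedness of the one-step factor is needed to keep $G_J(T)$ bounded independently of $T$, is the one point that genuinely deserves care.

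Substituting into the convergence criterion $2\,g_J(1)\,G_J(T)\le\beta$ of the preceding theorem, it suffices to ensure
\[
\frac{2e^{\lambda}}{1-e^{\lambda}} \;\le\; \beta,
\]
which rearranges to $2e^{\lambda} \le \beta(1-e^{\lambda})$, i.e.\ $e^{\lambda}(2+\beta)\le\beta$, precisely the stated threshold $e^{\lambda}\le \tfrac{\beta}{2+\beta}$. Finally I would note that $\beta\mapsto \tfrac{\beta}{2+\beta}$ is increasing on $[0,1)$ with supremum $\tfrac{1}{3}$ attained only in the limit $\beta\to 1$, so $\tfrac{\beta}{2+\beta}<\tfrac{1}{3}$, completing the displayed chain of inequalities. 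The theorem then yields linear convergence with rate $\beta$ in $\|\cdot\|_M$; converting back to the Euclidean norm multiplies the prefactor by the fixed, $T$-independent factor $\kappa_M = \sqrt{\lambda_{\max}(M)/\lambda_{\min}(M)}$, leaving the rate $\beta$ unchanged, as claimed.
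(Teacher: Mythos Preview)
Your proposal is correct and follows essentially the same route as the paper: specialize the preceding theorem to the $M$-weighted norm, take $g_J(j)=e^{\lambda j}$ so that $2\,g_J(1)\,G_J(T)=2e^{\lambda}\frac{1-e^{\lambda T}}{1-e^{\lambda}}$, and then observe that $e^{\lambda}\le\frac{\beta}{2+\beta}$ forces this quantity below $\beta$. The paper keeps the $T$-dependent geometric sum and states the sufficient condition directly, while you pass through the uniform bound $G_J(T)\le\frac{1}{1-e^{\lambda}}$; these are equivalent, and your added remark about the $\kappa_M$ prefactor when converting to the Euclidean norm is a harmless bonus.
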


\begin{proof}
Suppose the state space model is contracting in constant metric $M$, so that 
\[ \| M^{1/2} J_t M^{-1/2} \| = \| J_t \|_M \leq e^{\lambda}  < 1\]
for all $t$. Then, by Lemma \ref{lemma:bound_on_J_inverse} we have that 
\[||\b e^{(i+1)}||_M \leq || \b J^{-1}||_M \| \mathbf{J}(\mathbf{s}^{(i)}) - \mathbf{B}^{(i)} \|_M \|\b e^{(i)}\|_M \leq \bigg(\frac{1-e^{\lambda T}}{1-e^{\lambda} } \cdot 2e^{\lambda}  \bigg) \|\b e^{(i)} \|_M \]
Thus, in order to achieve linear convergence of the DEER iterates with rate $\beta \in [0,1)$,
\[||\b e^{(i+1)}||_M \leq \beta ||\b e^{(i)}||_M \quad \implies \quad ||\b e^{(i)}||_M \leq \beta^i ||\b e^{(0)}||_M,\]
we require that
\[2e^{\lambda}  \cdot \dfrac{1-e^{\lambda T } }{1 - e^{\lambda} } \leq \beta < 1. \]
A simple sufficient condition for satisfying this inequality is
\[e^{\lambda}  \ \leq \ \frac{\beta}{2 + \beta} \ < \ \frac{1}{3}, \]
or, 
\[\lambda < -\log(3). \]
\end{proof}

\paragraph{Number of Steps to reach basin of quadratic convergence}
Let us assume that there exists $\beta \in [0,1)$ such that
\[e^{\lambda} \leq \frac{\beta}{2 + \beta} \]
then the number of steps to reach the basin of quadratic convergence is upper bounded as
\[k_Q \ \leq \ \log \left(\frac{1}{\beta}\right) \cdot \log\left(\frac{e^{\lambda} ||\b e^{(0)}|| L}{\mu}\right)  \]

\section{Alternative Descent Techniques \& Worst/Average Complexity}\label{section:worst_case_complexity}
DEER uses the Gauss-Newton algorithm, which converges quadratically near the optimum but can be slow outside this basin. This motivates \emph{inexact} GN methods that guarantee a certain loss decrease per step, such as line-search and trust-region techniques. These trade increased computation and possibly more iterations for faster convergence guarantees.

In practice, we found that plain GN reliably converged quickly to the global optimum in contracting systems, so such safeguards were unnecessary. Still, it is useful to analyze DEER's worst-case path to the quadratic basin.

Many inexact GN variants achieve global convergence from any starting point. These include step-size schemes that approximate a continuous flow \citep{gavurin1958nonlinear}, trust-regions that bound update size (yielding ELK when applied to DEER \citep{gonzalez2024towards}), and backtracking line search ensuring loss reduction at each step \citep{NocedalWright}.

One can also use a simpler algorithm outside of the basin of quadratic convergence, and then switch to GN when needed. We will consider this latter option, and choose gradient descent as our simpler algorithm. Because the merit function is PL (see section \ref{subsec:Merit_PL}), the number of steps required for gradient descent to reach the quadratic convergence region scales as:
\begin{equation}\label{eq:worst_case_GN}
k_Q \sim \frac{1}{\mu} \cdot \log \frac{|| \b r^{(0)} ||}{\mu},    
\end{equation}
where $|| \b r^{(0)} ||$ is the residual at initialization. For unpredictable systems, \( \mu \) may shrink arbitrarily with increasing sequence length \( T \), leading to unbounded growth in the number of optimization steps \( k_Q \). By contrast, for predictable systems, \( \mu \) remains bounded, implying that the number of optimization steps does not increase with sequence length. Since the cost of sequential evaluation always increases with \( T \), DEER can, \textit{even in the worst case}, compute the true rollout faster than sequential evaluation for predictable systems---especially for long sequences. Indeed, assuming the system is contracting with rate $e^{\lambda} < 1$, then the number of steps needed the reach the basin of quadratic convergence is $\mathcal{O}\left(\log || \b r^{(0)} || \right)$.

Thus, if the initial error grows polynomial in $T$, i.e., $|| \b r^{(0)} || \propto T^p$, then this implies that the number of gradient descent steps needed to reach the basin of quadratic convergence is only $\mathcal{O}(\log T)$, and thus the total computational time is $\mathcal{O}((\log T)^2)$. In practice, for randomly initialized DEER, we observe $p = 1$.

In practice, we observe that DEER converges much faster than the worst-case analysis \eqref{eq:worst_case_GN} would suggest. In particular, we observe that DEER converges in roughly $\log \frac{1}{\mu}$, steps, even for unpredictable systems. This behavior can be explained with a simple ``two-phase" model, wherein the DEER iterates move towards the basin of quadratic convergence at a rate which is independent of the PL-constant $\mu$ (see Appendix \ref{section:empirical_DEER_scaling}).

\section{Proof of Size of Basin of Quadratic Convergence}\label{sec:basin_of_quadratic_convergence}

This section provides a proof of \Cref{theorem:basin_size}:
\begin{thm*}[\Cref{theorem:basin_size}]
Let $\mu$ denote the PL-constant of the merit function, which Theorem \ref{theorem:PL-LLE} relates to the LLE $\lambda$. Let $L$ denote the Lipschitz constant of the Jacobian of the dynamics function $J(s)$. Then, $\nicefrac{\mu}{L}$ lower bounds the radius of the basin of quadratic convergence of DEER; that is, if
\begin{equation*}
||\b r(\b s^{(i)}) ||_2 \leq  \ \dfrac{\mu}{L},
\end{equation*}
then $\b s^{(i)}$ is inside the basin of quadratic convergence. In terms of the LLE $\lambda$, it follows that if 
\begin{equation*}
    ||\b r(\b s^{(i)}) ||_2 \leq  \ \dfrac{1}{a^2 L} \cdot \left( \dfrac{e^{\lambda}-1}{e^{\lambda T}-1} \right)^2,
\end{equation*}
then $\b s^{(i)}$ is inside the basin of quadratic convergence.
\end{thm*}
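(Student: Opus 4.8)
The plan is to exploit the fact that, because $\b r$ maps $\mathbb{R}^{TD}$ to $\mathbb{R}^{TD}$ and its Jacobian $\b J$ is square and always invertible (by \eqref{eq:drds} it is block lower-triangular with identity diagonal blocks, hence has determinant one), the Gauss--Newton update \eqref{eq:GN_step} is \emph{exactly} Newton's method for the root-finding problem $\b r(\b s)=\b 0$. Quadratic convergence then follows from the classical Newton analysis (as in \S9.5.3 of \citet{boyd2004convex}), which I would phrase directly in terms of the residual norm using the two global ingredients already established: the lower bound $\sigma_{\min}(\b J(\b s))\ge\sqrt{\mu}$ for \emph{all} $\b s$, which is immediate from the definition of $\mu$ in \eqref{eq:mu_def} and gives $\|\b J(\b s)^{-1}\|_2\le 1/\sqrt{\mu}$; and the global $L$-Lipschitzness of $\b J$ from \Cref{theorem:Lipschitz}.

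The core step is to bound $\|\b r(\b s^{(i+1)})\|_2$ quadratically in $\|\b r(\b s^{(i)})\|_2$. Writing $\b\Delta^{(i)}:=\b s^{(i+1)}-\b s^{(i)}=-\b J(\b s^{(i)})^{-1}\b r(\b s^{(i)})$, the update gives the Newton identity $\b r(\b s^{(i)})+\b J(\b s^{(i)})\,\b\Delta^{(i)}=\b 0$. I would then apply the fundamental theorem of calculus along the segment $\b s^{(i)}+\tau\b\Delta^{(i)}$ to obtain
\[
\b r(\b s^{(i+1)})=\int_0^1\Big[\b J(\b s^{(i)}+\tau\b\Delta^{(i)})-\b J(\b s^{(i)})\Big]\b\Delta^{(i)}\,d\tau,
\]
and bound the integrand with the Lipschitz constant, yielding $\|\b r(\b s^{(i+1)})\|_2\le\tfrac{L}{2}\|\b\Delta^{(i)}\|_2^2$. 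Substituting $\|\b\Delta^{(i)}\|_2\le\|\b J(\b s^{(i)})^{-1}\|_2\,\|\b r(\b s^{(i)})\|_2\le\tfrac{1}{\sqrt{\mu}}\|\b r(\b s^{(i)})\|_2$ then produces the per-step contraction
\[
\|\b r(\b s^{(i+1)})\|_2\ \le\ \frac{L}{2\mu}\,\|\b r(\b s^{(i)})\|_2^2 .
\]

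It remains to convert this into a statement about the basin. If $\|\b r(\b s^{(i)})\|_2\le\mu/L$, the displayed bound gives $\|\b r(\b s^{(i+1)})\|_2\le\tfrac12\|\b r(\b s^{(i)})\|_2\le\mu/L$, so the residual at least halves and the next iterate again satisfies the hypothesis; by induction the iterates remain in $\{\b s:\|\b r(\b s)\|_2\le\mu/L\}$ and the residual contracts at the quadratic rate $\tfrac{L}{2\mu}$. This is precisely the statement that $\mu/L$ lower bounds the radius of the basin of quadratic convergence. Finally I would substitute the lower bound $\sqrt{\mu}\ge\tfrac{1}{a}\cdot\tfrac{e^{\lambda}-1}{e^{\lambda T}-1}$ from \Cref{theorem:PL-LLE}, giving $\mu/L\ge\tfrac{1}{a^2L}\big(\tfrac{e^{\lambda}-1}{e^{\lambda T}-1}\big)^2$ and hence the second displayed inequality.

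The one point requiring care---and the only place the specific problem structure matters---is that every bound must hold \emph{globally} rather than merely near $\b s^*$: the step-size estimate needs $\sigma_{\min}(\b J)\ge\sqrt{\mu}$ at $\b s^{(i)}$ \emph{and} along the entire integration segment, which is guaranteed because $\mu$ is defined as a global infimum in \eqref{eq:mu_def}, while the Lipschitz estimate is global by \Cref{theorem:Lipschitz}. The factor $2$ separating the quadratic coefficient $\tfrac{L}{2\mu}$ from the stated radius $\mu/L$ is exactly what enforces the ``at least halving'' that keeps iterates inside the basin. Everything beyond these two observations is routine, so the substantive content is the recognition that square Gauss--Newton coincides with Newton's method, combined with inserting the two previously proved global bounds on $\sigma_{\min}(\b J)$ and the Lipschitz constant of $\b J$.
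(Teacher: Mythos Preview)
Your proposal is correct and follows essentially the same route as the paper's own proof: both use the fundamental theorem of calculus along the Gauss--Newton segment to obtain $\|\b r(\b s^{(i+1)})\|_2\le\tfrac{L}{2\mu}\|\b r(\b s^{(i)})\|_2^2$ (the paper's eq.~\eqref{eq:GN_residual_shrink}, derived there exactly as you describe and likewise credited to \citet[\S9.5.3]{boyd2004convex}), then close the induction via halving when $\|\b r(\b s^{(i)})\|_2\le\mu/L$, and finally substitute the lower bound on $\sqrt{\mu}$ from \Cref{theorem:PL-LLE}. One tiny overstatement: the bound $\sigma_{\min}(\b J)\ge\sqrt{\mu}$ is needed only at $\b s^{(i)}$ (to control $\|\b J(\b s^{(i)})^{-1}\|_2$), not along the integration segment, where only the Lipschitz estimate is used---but since $\mu$ is a global infimum this is harmless.
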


Suppose we are at a point $\b s^{(i)} \in \mathbb{R}^{TD}$ (i.e. DEER iterate $i$), and we want to get to $\b s^{(i+1)}$.
The change in the trajectory obtained from~\cref{eq:GN_step} is,
\begin{equation*}
    \Delta \mathbf{s}^{(i)} := -\b J(\b s^{(i)})^{-1} \b r(\b s^{(i)})
\end{equation*}
(where the iteration number will hopefully be clear from context).
The merit function is $\mathcal{L}(\mathbf{s}) = \frac{1}{2} \| \b r(\mathbf{s}) \|_2^2$, so if we can get some control over $\| \mathbf{r}(\mathbf{s}^{(i)}) \|_2$, we will be well on our way to proving a quadratic rate of convergence.

First, leveraging the form of the Gauss-Newton update, we can simply ``add zero" to write
\begin{align*}
    \b r(\mathbf{s}^{(i+1)}) & = \mathbf{r}(\mathbf{s}^{(i)} + \Delta \b s^{(i)}) \\
    & = \mathbf{r}(\mathbf{s}^{(i)} + \Delta \b s^{(i)}) - \mathbf{r}(\mathbf{s}^{(i)}) - \b J(\mathbf{s}^{(i)}) \Delta \b s^{(i)}
\end{align*}
Next, we can write the difference $\mathbf{r}(\mathbf{s}^{(i)} + \Delta \b s^{(i)}) - \mathbf{r}(\mathbf{s}^{(i)})$ as the integral of the Jacobian, i.e.
\begin{equation*}
    \mathbf{r}(\mathbf{s}^{(i)} + \Delta \b s^{(i)}) - \mathbf{r}(\mathbf{s}^{(i)}) = \int_0^1 \b J\left(\b s^{(i)} + \tau \Delta \b s^{(i)}\right) \Delta \b s^{(i)} \, d\tau.
\end{equation*}
Therefore,
\begin{equation*}
    \b r(\mathbf{s}^{(i+1)}) = \int_0^1 \left( \b J\left(\b s^{(i)} + \tau \Delta \b s^{(i)}\right) - \b J( \mathbf{s}^{(i)}) \right) \Delta \b s^{(i)} \, d\tau
\end{equation*}
Taking $\ell_2$-norms and using the triangle inequality, it follows that
\begin{equation*}
    \| \b r(\mathbf{s}^{(i+1)}) \|_2 \leq \int_0^1 \left\| \left( \b J\left(\mathbf{s}^{(i)} + \tau \Delta \b s^{(i)}\right) - \b J(\mathbf{s}^{(i)}) \right) \Delta \b s^{(i)} \right\|_2 d\tau.
\end{equation*}
Now, if we assume that $\b J$ is $L$-Lipschitz and use the definition of spectral norm, it follows that
\begin{equation*}
    \left\| \left( \b J\left(\mathbf{s}^{(i)} + \tau \Delta \b s^{(i)}\right) - \b J(\mathbf{s}^{(i)}) \right) \Delta \b s^{(i)} \right\|_2 \leq \tau L \| \Delta \b s^{(i)} \|_2^2,
\end{equation*}
and so taking the integral we obtain
\begin{align*}
    \| \b r(\mathbf{s}^{(i+1)}) \|_2 & \leq \frac{L}{2} \| \Delta \b s^{(i)} \|_2^2 \\
    & = \frac{L}{2} \mathbf{r}(\mathbf{s}^{(i)})^{\top} \b J(\mathbf{s}^{(i)})^{-\top} \b J(\mathbf{s}^{(i)})^{-1} \mathbf{r}(\mathbf{s}^{(i)}).
\end{align*}
By definition, $\sqrt{\mu}$ is a lower bound on all singular values of $\mathbf{J}(\mathbf{s}(i))$, for all $i$.
Therefore, $ \| \b J(\mathbf{s}^{(i)})^{-1} \|_2 \leq \nicefrac{1}{\sqrt{\mu}}$ for all $i$, and it follows that
\begin{equation}\label{eq:GN_residual_shrink}
    \| \mathbf{r}(\mathbf{s}^{(i+1)}) \|_2 \leq \frac{L}{2 \mu} \| \mathbf{r}(\mathbf{s}^{(i)}) \|_2^2,
\end{equation}
which is the direct analogy of \citet[9.33]{boyd2004convex}. To reiterate, here $L$ is the Lipschitz constant of $\b J$, while $\mu:=\inf_{i \in \mathbb{N}} \sigma^2_{\mathrm{min}}\left( \mathbf{J}(\mathbf{s}^{(i)}) \right)$. 

While this is a quadratic convergence result for GN, this result is not useful unless $\mathbf{r}(\mathbf{s}^{(i+1)}) \|_2 \leq \| \mathbf{r}(\mathbf{s}^{(i)}) \|_2$ (i.e. would backtracking line search accept this update).
However, if we have $\| \mathbf{r}(\mathbf{s}^{(i)}) \|_2 < \frac{2 \mu}{L}$, then every step guarantees a reduction in $\mathbf{r}$ because in this case
\begin{equation*}
    \| \mathbf{r}(\mathbf{s}^{(i+1)}) \|_2 < \| \mathbf{r}(\mathbf{s}^{(i)}) \|_2.
\end{equation*}
Therefore, we have $\| \mathbf{r}(\mathbf{s}^{(j)}) \|_2 < \frac{2 \mu}{L}$ for all $j > i$.
Thus, we have related the size of the basin of quadratic convergence of GN on the DEER objective to the properties of $\b J$.
Note that with linear dynamics, each $J_t$ is constant in $s$, and so each $J_t$ is $0$-Lipschitz. Thus, the basin of quadratic convergence becomes infinite. Intuitively, if $J_t$ doesn't change too quickly with $s$, then DEER becomes a more and more potent method.

\section{Parameterizing nonlinear SSMs to be contractive}\label{app:ssm_param_contract}

In this section, we highlight a practical strategy for speeding up the training of nonlinear state space models (SSMs) based on our theoretical findings.

Our results indicate that nonlinear SSMs with negative largest Lyapunov exponents (LLEs) are efficiently parallelizable. To exploit this during training, one must ensure that the model maintains negative LLEs throughout optimization. One straightforward and effective method to achieve this is by design, through \emph{parameterization}. In particular, by introducing an auxiliary variable to enforce the desired constraint (in this case, negative LLE), and then performing unconstrained optimization on this variable.

This strategy is particularly well-suited to neural network-based SSMs. For example, consider the scalar nonlinear SSM:
$$
x_t = \tanh(w x_{t-1} + u_t)
$$
To guarantee negative LLE, it suffices to ensure that the Jacobian norm is strictly less than one:
$$
|J_t| = |w \cdot \text{sech}^2(w x_{t-1} + u_t)| \leq |w|
$$
Thus, enforcing $|w| < 1$ is sufficient. This can be achieved by reparameterizing $w = \tanh(b)$, where $b$ is a trainable, unconstrained auxiliary variable. This guarantees that $w \in (-1, 1)$ for all finite $b$, ensuring contractivity and, hence, negative LLE. A similar argument holds in the multivariate case, using the spectral norm. 

\section{Interpreting nonlinear SSMs as stacks of linear dynamical systems}\label{sec:stack}

\begin{figure}
    \centering
    \includegraphics[width=1.0\linewidth]{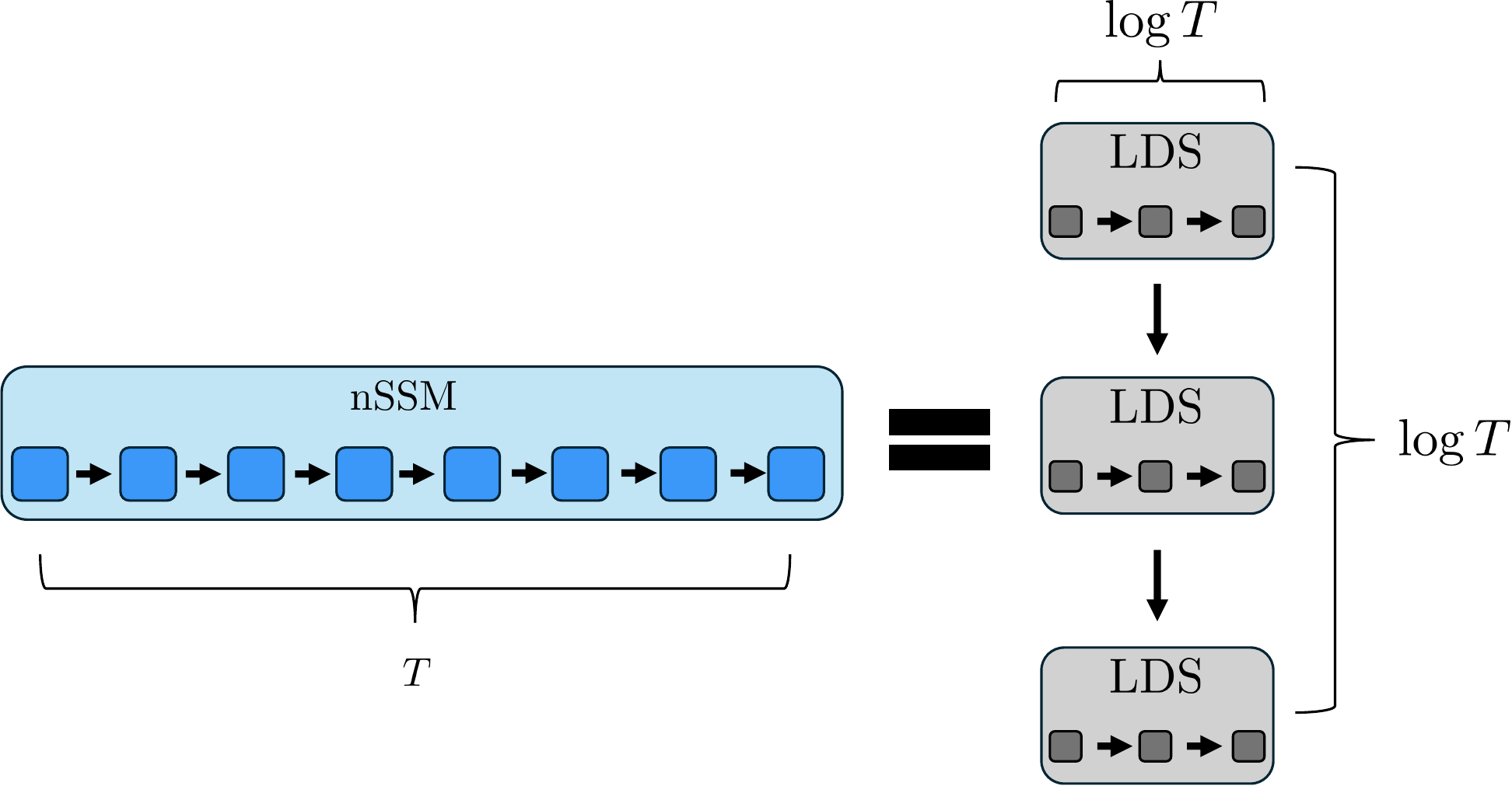}
    \caption{\textbf{Equivalence between a contractive nSSM and an $\mathcal{O}(\log T)$ stack of linear state-space models.} Contractivity implies that the nonlinear dynamics can be decomposed into a hierarchy of $\mathcal{O}(\log T)$ layers of linear SSMs, or linear dynamics systems (LDS), each of which can be evaluated in $\mathcal{O}(\log T)$ time by a parallel scan.}
    \label{fig:nssm_dssm_equivilance}
\end{figure}

As mentioned in our Discussion in \Cref{sec:conclusion}, an important implication of our results is that a contractive nSSM can be interpreted as a hierarchical composition of linear state-space layers (SSMs), or equivalently, linear dynamical system (LDS) layers. Each layer can be evaluated in $\mathcal{O}(\log T)$ time with a parallel scan, and the total number of layers required scales as $\mathcal{O}(\log T)$. This perspective shows that nonlinear temporal dependencies can be captured through a logarithmic-depth stacking of linear dynamics. Figure \ref{fig:nssm_dssm_equivilance} provides a schematic illustration of this equivalence. 

More explicitly, each iteration of DEER is given by the linear dynamical system
\begin{equation}\label{eq:lds_layer}
    s_{t+1}^{(i+1)} = f(s_{t}^{(i)}) + J_{t+1}(s_t^{(i)}) \left( s_t^{(i+1)} - s_t^{(i)} \right).
\end{equation}
Therefore, we can interpret each ``iteration'' $(i)$ of DEER as a sequence-mixing ``layer'' $(i)$, where the sequence-mixing layer is an input-dependent switching linear dynamical system, like in Mamba \cite{gu2023mamba}. The inputs to ``layer'' $(i+1)$ is the state trajectory of the immediately preceding ``iteration'' or ``layer'' $(i)$.
Because we prove that DEER converges linearly in \Cref{theorem:deer_converges_globally}, it follows that a contractive nSSM can be simulated in $\mathcal{O}(\log T)$ LDS layers of the form shown in \cref{eq:lds_layer}, assuming the initial error grows polynomially in the sequence length.

\section{Experimental Details and Discussion}\label{app:exp_details}

All of our experiments use FP64 to, as much as possible, focus on algorithmic factors controlling the rate of convergence of DEER, as opposed to numerical factors. 
As noted in \citep{gonzalez2024towards}, DEER can be prone to numerical overflow in lower precision.
While such numerical overflow can be overcome by resetting \texttt{NaNs} to their initialized value, such an approach resets the optimization and leads to rates that are slower than what Gauss-Newton would achieve in infinite precision (exact values in $\mathbb{R}$).

\subsection{Deriving the Empirical Scaling of DEER}\label{section:empirical_DEER_scaling}

In our experiments, we observed that DEER typically converges in $\mathcal{O}(\log(1 / \mu))$ steps (see, for example, Figure~\ref{fig:thresh}). To understand this scaling behavior, we propose a simple two-phase model of DEER convergence. In the first phase, the iterates approach the basin of quadratic convergence at a linear rate, as guaranteed by Theorem~\ref{theorem:deer_converges_globally}. In the second phase, rapid quadratic convergence occurs, typically requiring only one or two steps to reach the true solution (up to floating point precision).

Although Theorem~4 shows that, in unpredictable systems, the overshoot factor may be exponentially large in the sequence length $T$, this reflects a worst-case analysis. In practice, DEER behaves as though the overshoot factor is negligible. To formalize this observation, recall from Theorem~\ref{theorem:deer_converges_globally} that the residuals satisfy the linear convergence bound
\[
    \| \b r_i \| \le \osw \beta^i \|\b r_0\|,
\]
for some $\beta \in [0, 1)$ and $\osw \ge 1$, where $\beta$ is always independent of $T$. In our two-phase model, we assume that $\osw$ is also independent of $T$, even when the largest Lyapunov exponent $\lambda$ is positive.

We now upper-bound the number of steps $k$ required to enter the basin of quadratic convergence, whose size scales as $\mu/L$ (as given by~\eqref{eq:basin_of_q_convergence_size}). Solving
\begin{equation}\label{eq:ansatz}
\frac{\mu}{L} = \osw \beta^k \|\b r_0\| \quad \implies \quad  
k = \frac{1}{\log \beta} \log \left(\frac{\osw L \|\b r_0\|}{\mu}\right),
\end{equation}
we recover the empirically observed logarithmic scaling.

\subsection{Details and Discussion for mean-field RNN experiment}\label{app:mf_rnn}

We rolled out trajectories from a mean-field RNN with step size $1$ for $20$ different random seeds. The dynamics equations follow the form
\begin{equation*}
    s_{t+1} = W \mathrm{tanh}(s_t) + u_t,
\end{equation*}
for mild sinusoidal inputs $u_t$. We have $s_t \in \mathbb{R}^D$, where in our experiments $D=100$. Note that because of the placement of the saturating nonlinearity, here $s_t$ represents current, not voltage.

In the design of the weight matrix $W$, we follow \citet{engelken2023lyapunov}. In particular, we draw each entry $W_{ij} \stackrel{\mathrm{iid}}{\sim} \mathcal{N}(0,\nicefrac{g^2}{D})$, where $g$ is a scalar parameter. We then set $W_{ii} = 0$ for all $i$ (no self-coupling of the neurons). A key point of \citet{engelken2023lyapunov} is that by scaling the single parameter $g$, the resulting RNN goes from predictable to chaotic behavior. While \citet{engelken2023lyapunov} computes the full Lyapunov spectrum in the limit $D \to \infty$, for finite $D$ we can compute a very accurate numerical approximation to the LLE (cf. Appendix \ref{ssc:num_LLE}). In Figure \ref{fig:g_vs_lle_thresh}, we verify numerically that there is a monotonic relationship between $g$ and the LLE of the resulting system, and that the min-max range for 20 seeds is small.
\begin{figure}
    \centering
    \includegraphics[width=0.5\linewidth]{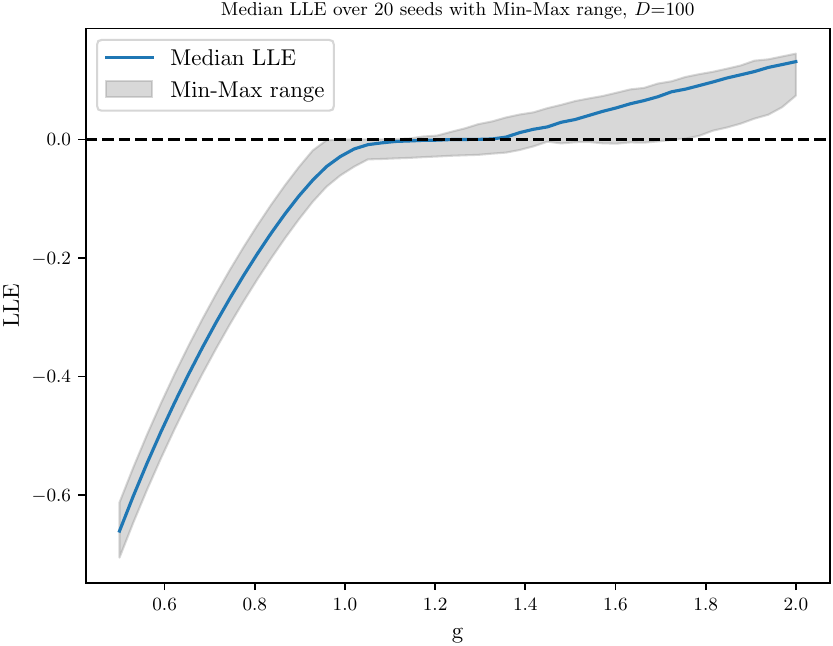}
    \caption{\textbf{Robust relationship in mean field RNN between variance parameter $g$ and LLE of the system.} For 20 seeds, we observe a robust and non-decreasing relationship between the scalar parameter $g$ and the LLE of the resulting mean-field RNN. The plot above is made for $50$ different values of $g$ from $0.5$ to $2.0$ (linearly spaced). We estimate the LLE over a sequence length of $T=9999$.}
    \label{fig:g_vs_lle_thresh}
\end{figure}
Accordingly, when making Figure \ref{fig:thresh} (Center), we use the monotonic relationship between $g$ and the LLE from Figure \ref{fig:g_vs_lle_thresh} to map the average number of DEER steps (over 20 different seeds) needed for convergence for different values of $g$ to the appropriate value of the LLE. We use 50 values of $T$ from 9 to 9999 (log spaced) to make Figure \ref{fig:thresh} (Center). We highlight $T=1000$ in Figure \ref{fig:thresh} (Right).

For the purposes of \Cref{fig:thresh}, we define
\begin{equation*}
    \Tilde{\mu} := \Big( \dfrac{e^{\lambda} - 1}{e^{\lambda T} - 1} \Big)^2,
\end{equation*}
i.e. the lower bound on $\mu$ from \Cref{theorem:PL-LLE}, with $a=1$.

In Figure \ref{fig:g_vs_lle_thresh}, we observe that around $g=1.2$, the RNNs have LLE around $0$, which is the threshold between predictability and chaos.
Working with chaotic dynamics in finite precision for long time series led to some interesting difficulties. 

First, as discussed in \citet{gonzalez2024towards}, DEER can experience numerical overflow when deployed on unstable systems. While we reset to the initialization (in this experiment we initialized $\mathbf{s}_{1:T}$ with iid draws from $\mathcal{U}[0,1]$), doing so slows convergence. Thus, many of our runs for $\lambda > 0$ and large $T$ take the maximum number of DEER iterations we allow (we do not allow more than $T$ iterations, as this is the theoretical upper bound for number of DEER iterations before convergence, cf. Proposition 1 of \citep{gonzalez2024towards}), which helps to explain the slight increase in red space for experiment (center plot of Figure \ref{fig:thresh}) vs. theory (left plot of Figure \ref{fig:thresh}). Note, however, that for $T=1000$ (the sequence length shown in the right plot), there is no numerical overflow for the DEER trajectories for any of the 20 random seeds or 50 values of $g$ tried.

Second, we observe that for many values of $\lambda$ in the chaotic range, even after the maximum number of DEER steps ($T$) was taken, there was still a large discrepancy between the true sequential rollout and the converged DEER iteration, even though the converged DEER iteration had numerically zero merit function. For example, in Figure \ref{fig:thresh} (Right), there are a series of points in the top right of the graph that all sit on the line $T=1000$, and while they have numerically zero merit function value, the converged DEER trajectories are quite different from the true sequential trajectories. 
The reason for this behavior precisely stems from the fact that for large values of $g$ (equivalently $\lambda$), these mean-field RNNs are chaotic. Even working in FP64, if slight numerical errors are introduced at any time point in the sequence (say $t=1$), then over the sequence length we can observe exponential divergence from the true trajectories, as illustrated in Figure \ref{fig:chaos}.
This experimental observation is complemented by our discussion of why unpredictable systems have excessively flat merit functions in Section \ref{ssc:pl_from_lyapunov}, and provides a numerical perspective on why ill-conditioned landscapes are hard to optimize: if the landscape is extremely flat, many potential trajectories $\mathbf{s}_{1:T}$ can have numerically zero merit function, even in extremely high precision.

\begin{figure}
    \centering
    \includegraphics[width=\linewidth]{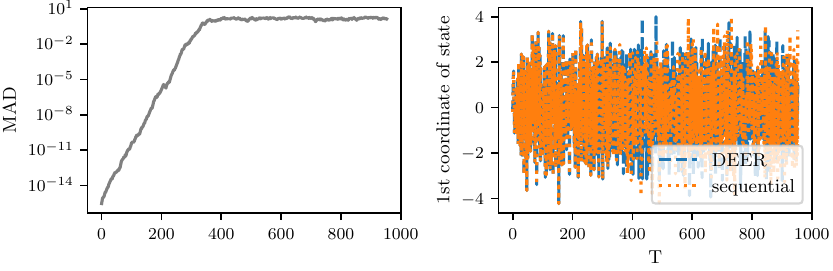}
    \caption{\textbf{Chaotic behavior means numerically zero merit function can still be far from sequential trajectory.} For $g=1.85$ and $T=1000$, we show the final DEER vs sequential trajectory. The DEER trajectory has merit function \eqref{eq:residual_and_loss} numerically equal to zero. However: \textbf{(Left)} the mean absolute deviation (MAD) at each time point $t$ between the final DEER iteration $\mathbf{s}_t^{(T)}$ and the sequential rollout $\mathbf{s_t^*}$ grows exponentially. This exponential growth of error is a signature of chaos: compare, for example, with Figure 9.3.5 of \citet{strogatz2018nonlinear}. The saturation of the error eventually occurs because of the saturating nonlinearity present in the RNN. \textbf{(Right)} We visualize the first coordinate of both the final DEER iteration and the sequential trajectory, showing that while they initially coincide, they diverge around $t=300$.
    }
    \label{fig:chaos}
\end{figure}

\subsection{Additional experiment for the mean-field RNN: other optimizers and wallclock time}\label{app:gd_and_wallclock}

In this section, we provide further experiments in the setting of the mean-field RNN (\Cref{fig:thresh}). In particular, we showcase the generality of our theory beyond DEER (Gauss-Newton optimization), and the practicality of our theory by reporting wallclock times. We consider the setting in the right most panel of \Cref{fig:thresh}, where we evaluate a mean field RNN over a sequence length of length $T=1000$.

\paragraph{Quasi-Newton and Gradient Descent} Instead of only using Gauss-Newton optimization (DEER) to parallelize the sequence length, we also consider other optimization algorithms (quasi-Newton and gradient descent) to showcase the generality of our theory.

We include a quasi-Newton algorithm proposed in \citet{gonzalez2024towards} called quasi-DEER. Quasi-DEER simply replaces the $J_t$ defined in \cref{eq:drds} with $\mathrm{diag}(J_t)$, and so is also parallelizable over the sequence length with a parallel scan. Furthermore, we also include gradient descent on the merit function, which is embarrassingly parallel over the sequence length. In the top panel of \Cref{fig:gd_and_wallclock}, we observe that the number of steps for gradient descent and quasi-DEER to converge also scales monotonically with the LLE, as we expect from \Cref{theorem:PL-LLE}. DEER (Gauss-Newton) converges in a small number of steps all the way up to the threshold between predictability and unpredictability ($\lambda=0$). Intuitively, the performance of the other optimizers degrades more quickly as unpredictability increases because quasi-Newton and gradient descent use less information about the curvature of the loss landscape.

Even though gradient descent was slower to converge in this setting, we only tried gradient descent with a fixed step size. An advantage of a first-order method like gradient descent over a second-order method like Gauss-Newton (DEER) is that the first-order method is embarrassingly parallel (and so with sufficient parallel processors, the update runs in constant time), while DEER and quasi-DEER use parallel scans (and so the update runs in $O(\log T)$ time). Exploring accelerated first-order methods like Adam \citep{adam}, or particularly Shampoo \citep{Shampoo} or SOAP \citep{vyas2025soap} (which are often preferred in recurrent settings like \cref{eq:nonlinear_seq_model})---or in general trying to remove the parallel scan---are therefore very interesting directions for future work.

Sequential evaluation of \cref{eq:nonlinear_seq_model} can also be thought of as block coordinate descent on the merit function $\mathcal{L}(\mathbf{s})$, where the block $s_t \in \mathbb{R}^D$ is optimized at optimization step $(t)$. The optimization of each block is a convex problem: simply minimize $\| s_t - f(s_{t-1}^*) \|_2^2$, or equivalently set $s_t = f(s_{t-1}^*)$. As sequential evaluation will always take $T$ steps to converge, we do not include it in the top panel of \Cref{fig:gd_and_wallclock}.

\paragraph{Wallclock time} In the bottom panel of \Cref{fig:gd_and_wallclock}, we also report the wallclock times for these algorithms to run (our experiments are run on an H100 with 80 GB onboard memory). We observe that the run time of sequential evaluation (green) is effectively constant with respect to $\lambda$. We observe that in the predictable setting, DEER is an order of magnitude faster than sequential evaluation, while in the unpredictable regime, DEER is 1-2 orders of magnitude slower than sequential evaluation. This importance of using parallel evaluation only in predictable settings is a core practical takeaway from our theoretical contributions.

\begin{wrapfigure}{r}{0.51\textwidth}
    \centering
    \includegraphics[width=0.5\textwidth]{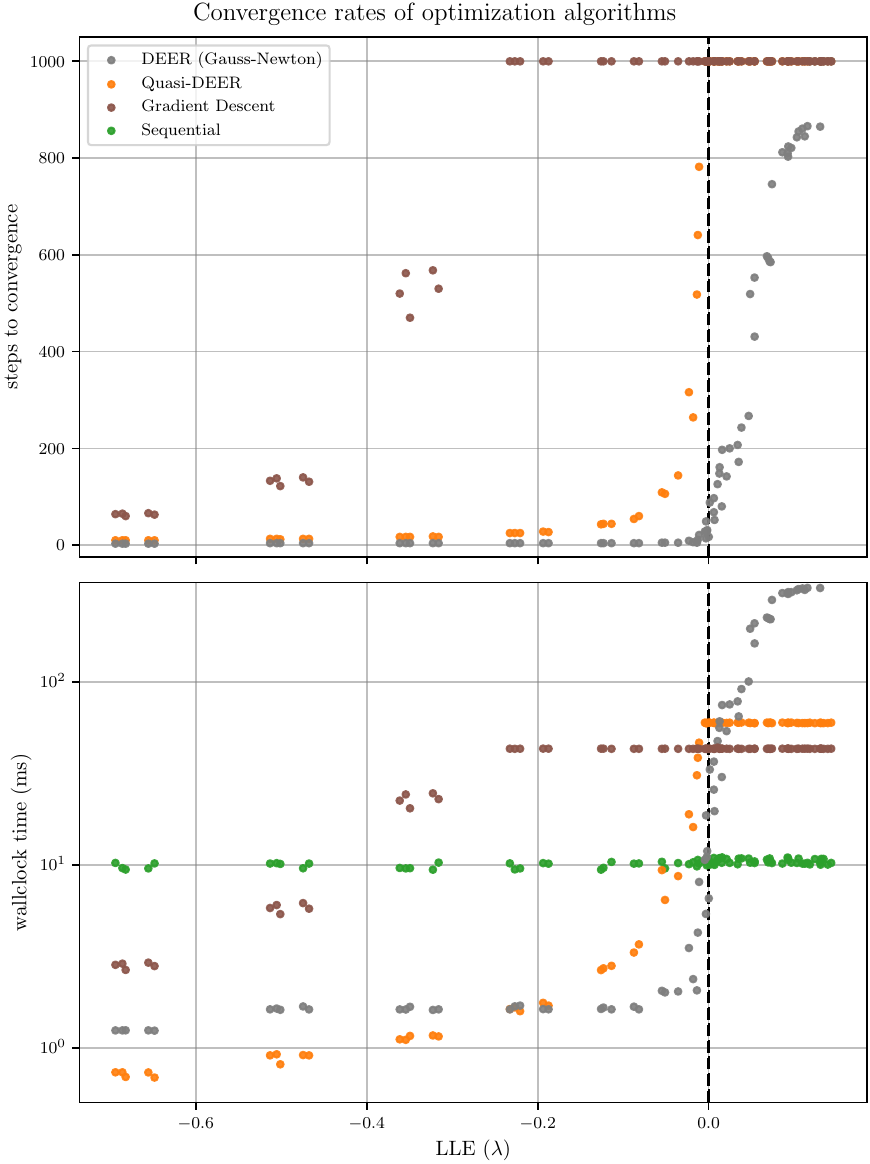}
    \caption{\textbf{Convergence rates and wallclock time for many optimizers.} We supplement the mean-field RNN experiment by also considering quasi-Newton and gradient descent methods \textbf{(top)}, and recording wallclock time, including for sequential evaluation \textbf{(bottom)}. 
    }
    \label{fig:gd_and_wallclock}
\end{wrapfigure}

\paragraph{Further details} 

We run the experiment in \Cref{fig:gd_and_wallclock} on a smaller scale than the experiment in \Cref{fig:thresh} (Right). In \Cref{fig:gd_and_wallclock}, we consider 5 random seeds for 16 values of $g$ equispaced between $0.5$ and $2.0$. Each wallclock time reported is the average of 5 runs for the same seed. We use a batch size of 1. While DEER (Gauss-Newton) and quasi-DEER effectively do not have a step size (they use a step size of $1$ always). For each value of $g$, we ran gradient descent with the following set of step sizes $\alpha$: $0.01, 0.1, 0.25, 0.5, 0.6, 0.7, 0.8, 0.9, \text{and } 1.0$. For each value of $g$, we then pick the step size $\alpha$ that results in the fastest convergence of gradient descent. For the smallest value of $g=0.5$, we use $\alpha=0.6$; for $g=0.6$, we use $\alpha=0.5$; and for all other values of $g$, we use $\alpha=0.25$. Future work may investigate more adaptive ways to tune the step size $\alpha$, or to use a learning rate schedule.

We use a larger tolerance of $\nicefrac{\mathcal{L}(\mathbf{s})}{T} \leq 10^{-4}$ to declare convergence than in the rest of the paper (where we use a tolerance of $10^{-10}$) because gradient descent often did not converge to the same degree of numerical precision as sequential, quasi-DEER, or DEER. However, this is a per time-step average error on the order of $10^{-4}$, in a system where $D=100$ and each state has current on the order of $1$. Nonetheless, it is an interesting direction for future work to investigate how to get gradient descent to converge to greater degrees of numerical precision in these settings; and, in general, how to improve the performance of all of these parallel sequence evaluators in lower numerical precision.

\subsection{Additional details for the two-well potential}\label{app:2wells_supp}

We form the two-well potential for our experiment in Section \ref{sec:experiments} as a sum of two quadratic potentials.
Concretely, we define the potential $\phi$ as the negative log probability of the mixture of two Gaussians, where one is centered at $(0,-1.4)$ and the other is centered at $(0,1.6)$, and they both have diagonal covariance.
In Langevin dynamics \citep{Langevin1908Eng, Friedman2022Langevin} for a potential $\phi$, the state $s_t$ evolves according to
\begin{equation*}
    s_{t+1} = s_t - \epsilon \nabla \phi(s_t) + \sqrt{2 \epsilon} w_t,
\end{equation*}
where $\epsilon$ is the step size and $w_t \stackrel{\mathrm{iid}}{\sim} \mathcal{N}(0,I_D)$. In our experiments, we use $\epsilon=0.01$.
\footnote{Notice that this is a discretization (with time step $\epsilon$) of the Langevin Diffusion SDE $ds(t) = - \nabla \phi(s(t)) dt + \sqrt{2} dw(t)$, where $w(t)$ is Brownian motion \citep{Higham2001AlgorithmicSDE, MacKay2023Lecture17SDEs, Holderrieth2023LangevinIntroMLE}.}
Accordingly, the Jacobians of the dynamics (those used in DEER) take the form
\begin{equation*}
    J_t = I_D - \epsilon \nabla^2 \phi (s_t).
\end{equation*}
As a result, the dynamics are contracting in regions where $\phi$ has positive curvature (inside of the wells, where the dynamics are robustly oriented towards one of the two basins) and unstable in regions where $\phi$ has negative curvature (in the region between the two wells, where the stochastic inputs can strongly influence which basin the trajectory heads towards). We observe that even though there are regions in state space where the dynamics are not contracting, the resulting trajectories have negative LLE. Accordingly, in Figure \ref{fig:two_well} (Right), we observe that the number of DEER iterations needed for convergence scales sublinearly, as the LLE of all the intermediate DEER trajectories after initialization are negative. These results demonstrate that if the DEER optimization path remains in contractive regions on average, we can still attain fast convergence rates as the sequence length grows.

Moreover, a further added benefit of our theory is demonstrated by our choice of initialization of DEER. Both \citep{lim2024parallelizing} and \citep{gonzalez2024towards} exclusively initialized all entries of $\mathbf{s}^{(0)}$ to zero.
However, such an initialization can be extremely pathological if the region of state space containing $\mathbf{0}$ is unstable, as is the case for the particular two well potential we consider.
For this reason, we initialize $\mathbf{s}^{(0)}$ at random (as iid standard normals).

An important consequence of this experiment is that it shows that there are systems that are not globally contracting that nonetheless enjoy fast rates of convergence with DEER. This fact is important because a globally contractive neural network may not be so interesting/useful for classification, while a locally contracting network could be.

Futhermore, in this experiment we show empirically that Langevin dynamics can have negative LLE (cf. \Cref{fig:two_well}). This results suggest that the Metropolis-adjusted Langevin algorithm (MALA), a workhorse of MCMC, may also be predictable in settings of interest, including multimodal distributions.

\begin{figure}
    \centering
    \includegraphics[width=0.95\linewidth]{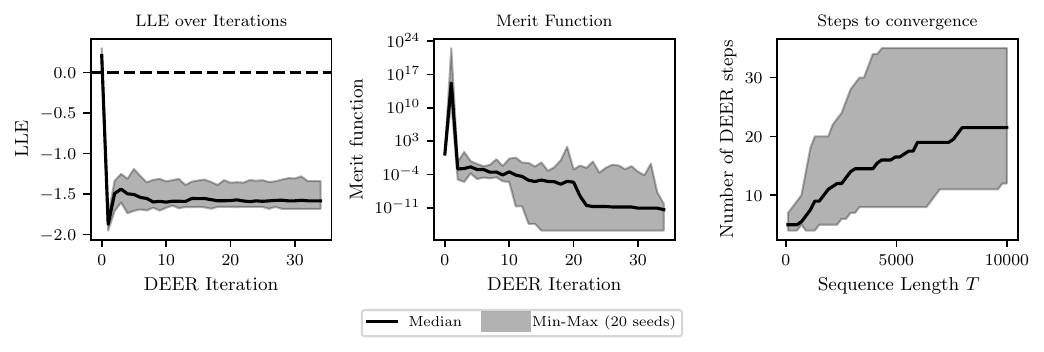}
    \caption{In this plot, we provide additional information about the behavior of DEER when rolling out Langevin dynamics on a two-well potential. \textbf{(Left)} We observe that across 20 random seeds (including different Langevin dynamics trajectories), the LLE for intermediate DEER iterations becomes negative after the first iteration. Consequently, we observe that the merit function \textbf{(Center)} experiences a spike on the very first DEER iteration (following initialization, which was the only trajectory with positive LLE), before trending towards convergence. As the system spends most of its time in contracting regions, we observe \textbf{(Right)} that the number of DEER iterations needed for convergence scales sublinearly with the sequence length $T$. We plot the min-max range for 20 seeds, and observe that even out of 20 seeds, the maximum number of DEER iterations needed to converge on a sequence length of $T=10,000$ is around $35$.}
    \label{fig:2well_app}
\end{figure}

\subsection{Building Stable Observers for Chaotic Systems}\label{app:observer}

To further demonstrate the applicability of our results—and to validate them in the context of non-autonomous systems—we construct nonlinear observers. Observers are commonly used in science and engineering to reconstruct the full state of a system from partial measurements \citep{luenberger1979introduction,simon2006optimal}. As a benchmark, we consider nine chaotic flows from the \texttt{dysts} dataset \citep{gilpin2chaos}. According to Theorem \eqref{theorem:PL-LLE}, these systems exhibit poorly conditioned merit function landscapes and are thus not well-suited for parallelization via DEER. If the corresponding observers are stable, then they should be suitable for DEER.

We design observers for these systems using two standard approaches: (1) by directly substituting the observation into the observer dynamics, following \citet{pecora1990synchronization}, or (2) by incorporating the observation as feedback through a gain matrix, as in \citet{zemouche2006observer}. We then apply DEER to compute the trajectories of both the original chaotic systems and their corresponding stable observers. As anticipated by Theorem \eqref{theorem:PL-LLE}, the chaotic systems exhibit slow convergence—often requiring the full sequence length—whereas the stable observers converge rapidly (Figure \ref{fig:observer}).

As with the two-well experiment, we initialize our guess for $s_t^{(0)}$ as iid standard normals.

\begin{figure}[ht]
    \centering    \includegraphics[width=1.0\linewidth]{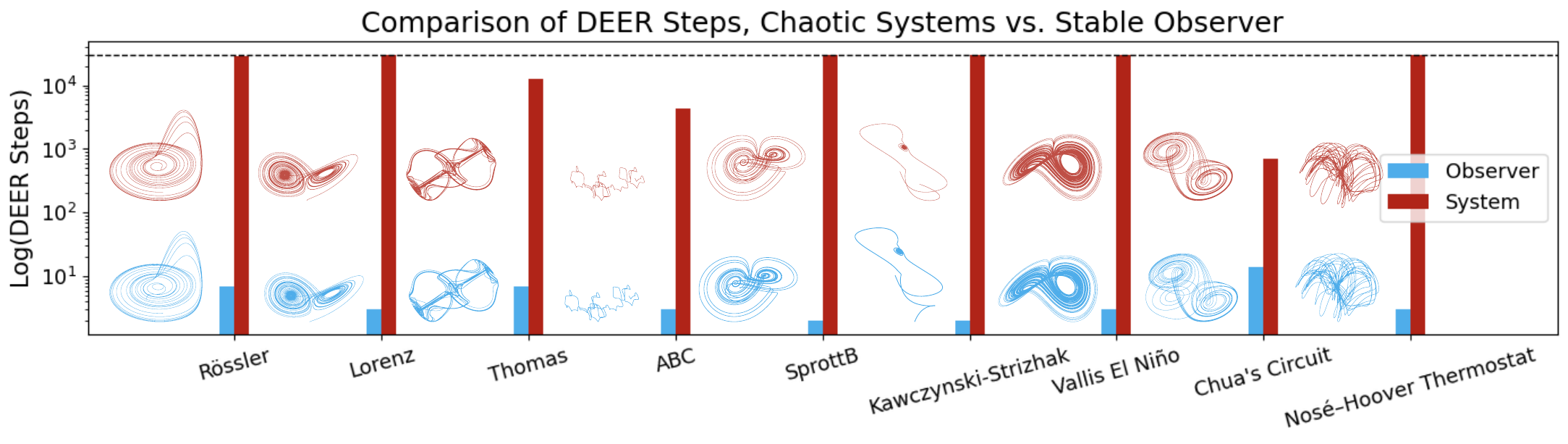}
    \caption{Comparison of DEER convergence behavior for original chaotic systems (red) and corresponding stable observers (blue) across nine flows taken from the \texttt{dysts} dataset. As predicted by Theorem \eqref{theorem:PL-LLE}, the chaotic systems converge slowly--often taking the whole sequence length $T$, denoted by the horizontal dashed line--due to poorly conditioned merit landscapes, while the stable observers achieve rapid convergence}
    \label{fig:observer}. 
\end{figure}

\subsection{Numerical computation of the discrete-time LLE}\label{ssc:num_LLE}

The Largest Lyapunov Exponent (LLE), which we often denote by $\lambda$, is defined in Definition \ref{def:predictable_systems}.
However, for long sequences $T$, naively computing it would be numerically unstable. Thus, we use Algorithm \ref{alg:LLE_estimate} to compute the LLE in a numerically stable way. Note that the algorithm nominally depends on the initial unit vector $u_0$. For this reason, we choose 3 different unit vectors (initialized at random on the unit sphere) and average over the 3 stochastic estimates. However, in practice we observe that the estimate is very stable with respect to choice $u_0$, and agrees with systems for which the true LLE is known, such as the Henon and logistics maps.

\begin{algorithm}
\caption{Numerically Stable Computation of Largest Lyapunov Exponent (LLE)}
\begin{algorithmic}[1]
    \State \textbf{Input:} Initial unit vector $u_0$, total iterations $T$
    \State \textbf{Initialize:} $\text{LLE} \gets 0$
    \For{$t = 1$ to $T$}
        \State Compute evolved vector: $u_t \gets J_t u_{t-1}$
        \State Compute stretch factor: $\lambda_t \gets \|u_t\|$
        \State Normalize vector: $u_t \gets u_t / \lambda_t$
        \State Accumulate logarithmic stretch: $\text{LLE} \gets \text{LLE} + \log \lambda_t$
    \EndFor
    \State \textbf{Output:} Estimated LLE $\lambda \gets \text{LLE} / T$
\end{algorithmic}
\label{alg:LLE_estimate}
\end{algorithm}

\section{Discrete and Continuous Time LLE}\label{app:lle}

We provide the definition of the LLE of a discrete-time dynamical system (often called a \emph{map}) in \Cref{def:predictable_systems}. As our paper studies discrete-time SSMs as in \eqref{eq:nonlinear_seq_model}, this discrete-time definition of LLE makes sense for our setting. However, as many of our experiments involve the discretization of continuous time systems, we want to review how the LLEs of discrete and continuous time systems relate to each other. Helpful references on this topic include \citep{lohmiller1998contraction, LegrasVautard1996LiapunovVectors}.

The LLE quantifies what happens to a perturbation\footnote{technically a \emph{virtual displacement}, cf. \citet{lohmiller1998contraction}.} $\delta x$ over time. Does its magnitude $\| \delta x\|$ grow or shrink over time? The LLE $\lambda$ is that value that makes \cref{eq:chaotic_seperation} hold, i.e.
\begin{equation*}
    \| \delta x(t) \| \sim e^{\lambda t } \| \delta x(0) \|.
\end{equation*}
This notion of the change in the size of a perturbation $\delta x(t)$ over time---as quantified by the LLE---makes sense for both a discrete-time SSM $x_t = f_t(x_{t-1})$ as well as a continuous time system $\dot{x} = F(x, t)$. 

Let us consider how a perturbation $\delta x$ evolves in both a discrete time system $x_t = f_t(x)$ and a continuous time system $\dot{x}=F(x,t)$.
An infinitesimal perturbation $\delta x$ is intimately related to derivatives with respect to $x$.
Therefore, their \emph{variational equations} are:
\begin{align*}
    \text{discrete-time:} \quad \quad \delta x_t & = \dfrac{\partial f_t(x_{t-1})}{\partial x} \delta x_{t-1} \\ 
    \text{continuous-time:} \quad \quad \dot{\delta x}(t) & = \dfrac{\partial F}{\partial x}\left(x(t), t\right) \delta x(t).
\end{align*}

\subsection*{Case study: discretizing a continuous-time system}

All of our experiments are the forward Euler discretizations of continuous-time systems. So, in this section, we work out what happens to the LLE in such a setting. Let's consider running our continuous-time setting for a length of time $T$.

In this setting, if we discretize by timestep $\Delta t$, our resulting discrete-time map $f$ is given by
\begin{equation*}
    f_{t+\Delta t}(x_{t}) = x_{t} + \Delta t F(x_{t}, t).
\end{equation*}
Therefore, it follows that
\begin{equation*}
    J_{t+\Delta t} := \dfrac{\partial f}{\partial x}(x_{t}) = I_D + \Delta t \dfrac{\partial F}{\partial x}(x_{t}).
\end{equation*}
We can naturally define the continuous-time LLE as the limit  of the products of these $J_t$ as $\Delta t \to 0$, i.e.
\begin{equation*}
    \lambda_c := \lim_{\Delta t \to 0 }\frac{1}{T} \log \left \| \prod_{k=1}^{N} J_{k \Delta t} \right \|,
\end{equation*}
where the number of discrete time-steps $N$ is given by $N := T / \Delta t$.

Notice that this is extremely similar to the definition of the discrete-time LLE $\lambda_d$ we gave in \Cref{def:predictable_systems}, except division is occurring by the length of the time window $T$ instead of the number of discrete steps taken $N = T / \Delta t$. (Of course, $N=T$ if $\Delta t=1$.)

Therefore, if we have a discretization of a continuous-time system, and naively plug into our discrete-time LLE defined in \Cref{def:predictable_systems} (i.e., divide by number of discrete-time steps $N$ instead of the length of the time window $T$), the resulting $\lambda_d(\Delta t)$ will satisfy
\begin{equation*}
    \lambda_c = \frac{\lambda_d(\Delta t)}{\Delta t}.
\end{equation*}
Note that naively plugging in this discrete-time estimator would therefore result in a different estimate of the LLE depending on the size of the time-step $\Delta t$.
However, in most of this paper we still report the naive discrete-time LLE $\lambda_d(\Delta t)$ as this is the quantity relates to $\mu$ via $\mathbf{J}$. The exception to our convention is in \Cref{tab:lle_deer_comparison}, where we report our estimates $\lambda_c$ to better coincide with the intuitions and expectations of readers with backgrounds in continuous time systems. For all systems in \Cref{tab:lle_deer_comparison}, we use a step size of $0.01$, and so one can translate from the reported continuous time LLEs to discrete time LLEs by dividing by 100.

Ultimately, dividing by $\Delta t$, which is positive, does not change the sign of $\lambda$, i.e. whether or not the system is predictable or unpredictable.

\newpage
\section*{NeurIPS Paper Checklist}

\begin{enumerate}

\item {\bf Claims}
    \item[] Question: Do the main claims made in the abstract and introduction accurately reflect the paper's contributions and scope?
    \item[] Answer: \answerYes{} % Replace by \answerYes{}, \answerNo{}, or \answerNA{}.
    \item[] Justification: Our main claim is that predictability implies parallelizability. We demonstrate this theoretically by analyzing the condition of the merit loss function, and illustrate these findings experimentally as well. 
    \item[] Guidelines:
    \begin{itemize}
        \item The answer NA means that the abstract and introduction do not include the claims made in the paper.
        \item The abstract and/or introduction should clearly state the claims made, including the contributions made in the paper and important assumptions and limitations. A No or NA answer to this question will not be perceived well by the reviewers. 
        \item The claims made should match theoretical and experimental results, and reflect how much the results can be expected to generalize to other settings. 
        \item It is fine to include aspirational goals as motivation as long as it is clear that these goals are not attained by the paper. 
    \end{itemize}

\item {\bf Limitations}
    \item[] Question: Does the paper discuss the limitations of the work performed by the authors?
    \item[] Answer: \answerYes{} % Replace by \answerYes{}, \answerNo{}, or \answerNA{}.
    \item[] Justification: See Limitations section in the conclusion.
    \item[] Guidelines:
    \begin{itemize}
        \item The answer NA means that the paper has no limitation while the answer No means that the paper has limitations, but those are not discussed in the paper. 
        \item The authors are encouraged to create a separate "Limitations" section in their paper.
        \item The paper should point out any strong assumptions and how robust the results are to violations of these assumptions (e.g., independence assumptions, noiseless settings, model well-specification, asymptotic approximations only holding locally). The authors should reflect on how these assumptions might be violated in practice and what the implications would be.
        \item The authors should reflect on the scope of the claims made, e.g., if the approach was only tested on a few datasets or with a few runs. In general, empirical results often depend on implicit assumptions, which should be articulated.
        \item The authors should reflect on the factors that influence the performance of the approach. For example, a facial recognition algorithm may perform poorly when image resolution is low or images are taken in low lighting. Or a speech-to-text system might not be used reliably to provide closed captions for online lectures because it fails to handle technical jargon.
        \item The authors should discuss the computational efficiency of the proposed algorithms and how they scale with dataset size.
        \item If applicable, the authors should discuss possible limitations of their approach to address problems of privacy and fairness.
        \item While the authors might fear that complete honesty about limitations might be used by reviewers as grounds for rejection, a worse outcome might be that reviewers discover limitations that aren't acknowledged in the paper. The authors should use their best judgment and recognize that individual actions in favor of transparency play an important role in developing norms that preserve the integrity of the community. Reviewers will be specifically instructed to not penalize honesty concerning limitations.
    \end{itemize}

\item {\bf Theory Assumptions and Proofs}
    \item[] Question: For each theoretical result, does the paper provide the full set of assumptions and a complete (and correct) proof?
    \item[] Answer: \answerYes{} % Replace by \answerYes{}, \answerNo{}, or \answerNA{}.
    \item[] Justification: We provide detailed, complete, and correct proofs of all of our claims in the Appendix.
    \item[] Guidelines:
    \begin{itemize}
        \item The answer NA means that the paper does not include theoretical results. 
        \item All the theorems, formulas, and proofs in the paper should be numbered and cross-referenced.
        \item All assumptions should be clearly stated or referenced in the statement of any theorems.
        \item The proofs can either appear in the main paper or the supplemental material, but if they appear in the supplemental material, the authors are encouraged to provide a short proof sketch to provide intuition. 
        \item Inversely, any informal proof provided in the core of the paper should be complemented by formal proofs provided in appendix or supplemental material.
        \item Theorems and Lemmas that the proof relies upon should be properly referenced. 
    \end{itemize}

    \item {\bf Experimental Result Reproducibility}
    \item[] Question: Does the paper fully disclose all the information needed to reproduce the main experimental results of the paper to the extent that it affects the main claims and/or conclusions of the paper (regardless of whether the code and data are provided or not)?
    \item[] Answer: \answerYes{} % Replace by \answerYes{}, \answerNo{}, or \answerNA{}.
    \item[] Justification: We provide our code at \url{https://github.com/lindermanlab/predictability_enables_parallelization}
    \item[] Guidelines:
    \begin{itemize}
        \item The answer NA means that the paper does not include experiments.
        \item If the paper includes experiments, a No answer to this question will not be perceived well by the reviewers: Making the paper reproducible is important, regardless of whether the code and data are provided or not.
        \item If the contribution is a dataset and/or model, the authors should describe the steps taken to make their results reproducible or verifiable. 
        \item Depending on the contribution, reproducibility can be accomplished in various ways. For example, if the contribution is a novel architecture, describing the architecture fully might suffice, or if the contribution is a specific model and empirical evaluation, it may be necessary to either make it possible for others to replicate the model with the same dataset, or provide access to the model. In general. releasing code and data is often one good way to accomplish this, but reproducibility can also be provided via detailed instructions for how to replicate the results, access to a hosted model (e.g., in the case of a large language model), releasing of a model checkpoint, or other means that are appropriate to the research performed.
        \item While NeurIPS does not require releasing code, the conference does require all submissions to provide some reasonable avenue for reproducibility, which may depend on the nature of the contribution. For example
        \begin{enumerate}
            \item If the contribution is primarily a new algorithm, the paper should make it clear how to reproduce that algorithm.
            \item If the contribution is primarily a new model architecture, the paper should describe the architecture clearly and fully.
            \item If the contribution is a new model (e.g., a large language model), then there should either be a way to access this model for reproducing the results or a way to reproduce the model (e.g., with an open-source dataset or instructions for how to construct the dataset).
            \item We recognize that reproducibility may be tricky in some cases, in which case authors are welcome to describe the particular way they provide for reproducibility. In the case of closed-source models, it may be that access to the model is limited in some way (e.g., to registered users), but it should be possible for other researchers to have some path to reproducing or verifying the results.
        \end{enumerate}
    \end{itemize}

\item {\bf Open access to data and code}
    \item[] Question: Does the paper provide open access to the data and code, with sufficient instructions to faithfully reproduce the main experimental results, as described in supplemental material?
    \item[] Answer: \answerYes{} % Replace by \answerYes{}, \answerNo{}, or \answerNA{}.
    \item[] Justification: We provide code the in supplement.
    \item[] Guidelines:
    \begin{itemize}
        \item The answer NA means that paper does not include experiments requiring code.
        \item Please see the NeurIPS code and data submission guidelines (\url{https://nips.cc/public/guides/CodeSubmissionPolicy}) for more details.
        \item While we encourage the release of code and data, we understand that this might not be possible, so “No” is an acceptable answer. Papers cannot be rejected simply for not including code, unless this is central to the contribution (e.g., for a new open-source benchmark).
        \item The instructions should contain the exact command and environment needed to run to reproduce the results. See the NeurIPS code and data submission guidelines (\url{https://nips.cc/public/guides/CodeSubmissionPolicy}) for more details.
        \item The authors should provide instructions on data access and preparation, including how to access the raw data, preprocessed data, intermediate data, and generated data, etc.
        \item The authors should provide scripts to reproduce all experimental results for the new proposed method and baselines. If only a subset of experiments are reproducible, they should state which ones are omitted from the script and why.
        \item At submission time, to preserve anonymity, the authors should release anonymized versions (if applicable).
        \item Providing as much information as possible in supplemental material (appended to the paper) is recommended, but including URLs to data and code is permitted.
    \end{itemize}

\item {\bf Experimental Setting/Details}
    \item[] Question: Does the paper specify all the training and test details (e.g., data splits, hyperparameters, how they were chosen, type of optimizer, etc.) necessary to understand the results?
    \item[] Answer: \answerYes{} % Replace by \answerYes{}, \answerNo{}, or \answerNA{}.
    \item[] Justification: We provide experimental descriptions in the Appendix.
    \item[] Guidelines:
    \begin{itemize}
        \item The answer NA means that the paper does not include experiments.
        \item The experimental setting should be presented in the core of the paper to a level of detail that is necessary to appreciate the results and make sense of them.
        \item The full details can be provided either with the code, in appendix, or as supplemental material.
    \end{itemize}

\item {\bf Experiment Statistical Significance}
    \item[] Question: Does the paper report error bars suitably and correctly defined or other appropriate information about the statistical significance of the experiments?
    \item[] Answer: \answerYes{} % Replace by \answerYes{}, \answerNo{}, or \answerNA{}.
    \item[] Justification: Our experiments are run with 20 random seeds, with the full range being reported in the Appendix.
    \item[] Guidelines:
    \begin{itemize}
        \item The answer NA means that the paper does not include experiments.
        \item The authors should answer "Yes" if the results are accompanied by error bars, confidence intervals, or statistical significance tests, at least for the experiments that support the main claims of the paper.
        \item The factors of variability that the error bars are capturing should be clearly stated (for example, train/test split, initialization, random drawing of some parameter, or overall run with given experimental conditions).
        \item The method for calculating the error bars should be explained (closed form formula, call to a library function, bootstrap, etc.)
        \item The assumptions made should be given (e.g., Normally distributed errors).
        \item It should be clear whether the error bar is the standard deviation or the standard error of the mean.
        \item It is OK to report 1-sigma error bars, but one should state it. The authors should preferably report a 2-sigma error bar than state that they have a 96\% CI, if the hypothesis of Normality of errors is not verified.
        \item For asymmetric distributions, the authors should be careful not to show in tables or figures symmetric error bars that would yield results that are out of range (e.g. negative error rates).
        \item If error bars are reported in tables or plots, The authors should explain in the text how they were calculated and reference the corresponding figures or tables in the text.
    \end{itemize}

\item {\bf Experiments Compute Resources}
    \item[] Question: For each experiment, does the paper provide sufficient information on the computer resources (type of compute workers, memory, time of execution) needed to reproduce the experiments?
    \item[] Answer: \answerYes{} % Replace by \answerYes{}, \answerNo{}, or \answerNA{}.
    \item[] Justification: We provide experimental details in the Appendix.
    \item[] Guidelines:
    \begin{itemize}
        \item The answer NA means that the paper does not include experiments.
        \item The paper should indicate the type of compute workers CPU or GPU, internal cluster, or cloud provider, including relevant memory and storage.
        \item The paper should provide the amount of compute required for each of the individual experimental runs as well as estimate the total compute. 
        \item The paper should disclose whether the full research project required more compute than the experiments reported in the paper (e.g., preliminary or failed experiments that didn't make it into the paper). 
    \end{itemize}
    
\item {\bf Code Of Ethics}
    \item[] Question: Does the research conducted in the paper conform, in every respect, with the NeurIPS Code of Ethics \url{https://neurips.cc/public/EthicsGuidelines}?
    \item[] Answer: \answerYes{} % Replace by \answerYes{}, \answerNo{}, or \answerNA{}.
    \item[] Justification: Our paper is theoretical and so doesn't involve human subjects or proprietary data.
    \item[] Guidelines:
    \begin{itemize}
        \item The answer NA means that the authors have not reviewed the NeurIPS Code of Ethics.
        \item If the authors answer No, they should explain the special circumstances that require a deviation from the Code of Ethics.
        \item The authors should make sure to preserve anonymity (e.g., if there is a special consideration due to laws or regulations in their jurisdiction).
    \end{itemize}

\item {\bf Broader Impacts}
    \item[] Question: Does the paper discuss both potential positive societal impacts and negative societal impacts of the work performed?
    \item[] Answer: \answerYes{} % Replace by \answerYes{}, \answerNo{}, or \answerNA{}.
    \item[] Justification: We discuss the wide-ranging applicability of our theoretical contribution: that understanding the predictability implies parallelizability allows for the parallelizability of nonlinear systems across many domains.
    \item[] Guidelines:
    \begin{itemize}
        \item The answer NA means that there is no societal impact of the work performed.
        \item If the authors answer NA or No, they should explain why their work has no societal impact or why the paper does not address societal impact.
        \item Examples of negative societal impacts include potential malicious or unintended uses (e.g., disinformation, generating fake profiles, surveillance), fairness considerations (e.g., deployment of technologies that could make decisions that unfairly impact specific groups), privacy considerations, and security considerations.
        \item The conference expects that many papers will be foundational research and not tied to particular applications, let alone deployments. However, if there is a direct path to any negative applications, the authors should point it out. For example, it is legitimate to point out that an improvement in the quality of generative models could be used to generate deepfakes for disinformation. On the other hand, it is not needed to point out that a generic algorithm for optimizing neural networks could enable people to train models that generate Deepfakes faster.
        \item The authors should consider possible harms that could arise when the technology is being used as intended and functioning correctly, harms that could arise when the technology is being used as intended but gives incorrect results, and harms following from (intentional or unintentional) misuse of the technology.
        \item If there are negative societal impacts, the authors could also discuss possible mitigation strategies (e.g., gated release of models, providing defenses in addition to attacks, mechanisms for monitoring misuse, mechanisms to monitor how a system learns from feedback over time, improving the efficiency and accessibility of ML).
    \end{itemize}
    
\item {\bf Safeguards}
    \item[] Question: Does the paper describe safeguards that have been put in place for responsible release of data or models that have a high risk for misuse (e.g., pretrained language models, image generators, or scraped datasets)?
    \item[] Answer: \answerNA{} % Replace by \answerYes{}, \answerNo{}, or \answerNA{}.
    \item[] Justification: We do not create any language models or generators or scraped datasets.
    \item[] Guidelines:
    \begin{itemize}
        \item The answer NA means that the paper poses no such risks.
        \item Released models that have a high risk for misuse or dual-use should be released with necessary safeguards to allow for controlled use of the model, for example by requiring that users adhere to usage guidelines or restrictions to access the model or implementing safety filters. 
        \item Datasets that have been scraped from the Internet could pose safety risks. The authors should describe how they avoided releasing unsafe images.
        \item We recognize that providing effective safeguards is challenging, and many papers do not require this, but we encourage authors to take this into account and make a best faith effort.
    \end{itemize}

\item {\bf Licenses for existing assets}
    \item[] Question: Are the creators or original owners of assets (e.g., code, data, models), used in the paper, properly credited and are the license and terms of use explicitly mentioned and properly respected?
    \item[] Answer: \answerYes{} % Replace by \answerYes{}, \answerNo{}, or \answerNA{}.
    \item[] Justification: We cite original creators, such as the creator of the \texttt{dysts} benchmark dataset.
    \item[] Guidelines:
    \begin{itemize}
        \item The answer NA means that the paper does not use existing assets.
        \item The authors should cite the original paper that produced the code package or dataset.
        \item The authors should state which version of the asset is used and, if possible, include a URL.
        \item The name of the license (e.g., CC-BY 4.0) should be included for each asset.
        \item For scraped data from a particular source (e.g., website), the copyright and terms of service of that source should be provided.
        \item If assets are released, the license, copyright information, and terms of use in the package should be provided. For popular datasets, \url{paperswithcode.com/datasets} has curated licenses for some datasets. Their licensing guide can help determine the license of a dataset.
        \item For existing datasets that are re-packaged, both the original license and the license of the derived asset (if it has changed) should be provided.
        \item If this information is not available online, the authors are encouraged to reach out to the asset's creators.
    \end{itemize}

\item {\bf New Assets}
    \item[] Question: Are new assets introduced in the paper well documented and is the documentation provided alongside the assets?
    \item[] Answer: \answerYes{} % Replace by \answerYes{}, \answerNo{}, or \answerNA{}.
    \item[] Justification: We provide our code and documentation for it at \url{https://github.com/lindermanlab/predictability_enables_parallelization}
    \item[] Guidelines:
    \begin{itemize}
        \item The answer NA means that the paper does not release new assets.
        \item Researchers should communicate the details of the dataset/code/model as part of their submissions via structured templates. This includes details about training, license, limitations, etc. 
        \item The paper should discuss whether and how consent was obtained from people whose asset is used.
        \item At submission time, remember to anonymize your assets (if applicable). You can either create an anonymized URL or include an anonymized zip file.
    \end{itemize}

\item {\bf Crowdsourcing and Research with Human Subjects}
    \item[] Question: For crowdsourcing experiments and research with human subjects, does the paper include the full text of instructions given to participants and screenshots, if applicable, as well as details about compensation (if any)? 
    \item[] Answer: \answerNA{} % Replace by \answerYes{}, \answerNo{}, or \answerNA{}.
    \item[] Justification: Our paper does not involve research with human subjects.
    \item[] Guidelines:
    \begin{itemize}
        \item The answer NA means that the paper does not involve crowdsourcing nor research with human subjects.
        \item Including this information in the supplemental material is fine, but if the main contribution of the paper involves human subjects, then as much detail as possible should be included in the main paper. 
        \item According to the NeurIPS Code of Ethics, workers involved in data collection, curation, or other labor should be paid at least the minimum wage in the country of the data collector. 
    \end{itemize}

\item {\bf Institutional Review Board (IRB) Approvals or Equivalent for Research with Human Subjects}
    \item[] Question: Does the paper describe potential risks incurred by study participants, whether such risks were disclosed to the subjects, and whether Institutional Review Board (IRB) approvals (or an equivalent approval/review based on the requirements of your country or institution) were obtained?
    \item[] Answer: \answerNA{} % Replace by \answerYes{}, \answerNo{}, or \answerNA{}.
    \item[] Justification: the paper does not involve crowdsourcing nor research with human subjects.
    \item[] Guidelines:
    \begin{itemize}
        \item The answer NA means that the paper does not involve crowdsourcing nor research with human subjects.
        \item Depending on the country in which research is conducted, IRB approval (or equivalent) may be required for any human subjects research. If you obtained IRB approval, you should clearly state this in the paper. 
        \item We recognize that the procedures for this may vary significantly between institutions and locations, and we expect authors to adhere to the NeurIPS Code of Ethics and the guidelines for their institution. 
        \item For initial submissions, do not include any information that would break anonymity (if applicable), such as the institution conducting the review.
    \end{itemize}

\item {\bf Declaration of LLM usage}
    \item[] Question: Does the paper describe the usage of LLMs if it is an important, original, or non-standard component of the core methods in this research? Note that if the LLM is used only for writing, editing, or formatting purposes and does not impact the core methodology, scientific rigorousness, or originality of the research, declaration is not required.
    %this research? 
    \item[] Answer: \answerNA{} % Replace by \answerYes{}, \answerNo{}, or \answerNA{}.
    \item[] Justification: The core methods introduced in this paper do not involve LLMs as any important, original, or non-standard components.
    \item[] Guidelines:
    \begin{itemize}
        \item The answer NA means that the core method development in this research does not involve LLMs as any important, original, or non-standard components.
        \item Please refer to our LLM policy (\url{https://neurips.cc/Conferences/2025/LLM}) for what should or should not be described.
    \end{itemize}

\end{enumerate}

\end{document}